\newcommand\bes{\begin{eqnarray}}
\newcommand\ees{\end{eqnarray}}
\newcommand\bess{\begin{eqnarray*}}
\newcommand\eess{\end{eqnarray*}}
\newtheorem{theorem}{Theorem}[section]
\newtheorem{lemma}{Lemma}[section]
\newtheorem{proposition}{Proposition}[section]
\newtheorem{remark}{Remark}[section]
\begin{document}
\title[Bistable dynamics and early warning signals]
{\bf Analysis of long transients and detection of early warning signals of extinction in a class of predator-prey models exhibiting bistable behavior.}

\author{S. Sadhu$^*$ }
\address{Department of Mathematics,
Georgia College \& State University, GA 31061, USA.}
\email{susmita.sadhu@gcsu.edu$^*$ }
 \author{S. Chakraborty Thakur$^+$}
\address{Department of Physics, Auburn University, AL 36849, USA.}
\email{szc0199@auburn.edu$^+$ }


\thispagestyle{empty}


\begin{abstract}
In this paper, we develop a method of analyzing long transient dynamics in a class of predator-prey models with two species of predators competing explicitly for their common prey, where the prey evolves on a faster timescale than the predators. In a parameter regime near a {\em{singular zero-Hopf bifurcation}} of the coexistence equilibrium state, we assume that the system under study exhibits bistability between a  periodic attractor that bifurcates from the singular Hopf point and another attractor, which could be a  periodic attractor or a point attractor, such that the invariant manifolds of the coexistence equilibrium point play central roles in organizing the dynamics. To find whether a solution that starts in a vicinity of the coexistence equilibrium approaches the periodic attractor or the other attractor, we reduce the equations to a suitable normal form, and examine the basin boundary near the singular Hopf point. A key component of our study includes an analysis of the long transient dynamics, characterized by their rapid oscillations with a slow variation in amplitude, by applying a moving average technique. We obtain a set of necessary and sufficient conditions on the initial values of a solution near the coexistence equilibrium to determine whether it lies in the basin of attraction of the periodic attractor.  As a result of our analysis, we devise a method of identifying early warning signals, significantly in advance, of a future crisis that  could lead to extinction of one of the predators. The analysis is applied to the predator-prey model considered in [\emph{Discrete and Continuous Dynamical Systems - B} 2021, 26(10), pp. 5251-5279] and we find that our theory is in good agreement with the numerical simulations carried out for this model.
\end{abstract}

\maketitle
Keywords. Long transients, method of averaging, zero-Hopf, singular Hopf, bistability, early warning signals, slow-fast systems, predator-prey models.

\vspace{0.15in}

AMS subject classifications. 34C20, 34C29, 34D15, 37C70, 37G05, 37G35, 92D40.


\section{Introduction}
Identifying early warning signals for anticipating population collapses or finding early indicators of recovery of a threatened or an endangered population is a major focus of research in nature conservation and ecosystem management. Often the collapses can be catastrophic as they may not be reverted, causing  huge  changes in ecosystem structure and function while leading to  extinction of species and substantial loss of biodiversity \cite{Hastings1, Hastings2,  morozovetal}. Examples of catastrophic collapses include the  extinction of Steller's sea cow \emph{Hydrodamalis gigas} within just 30 years of its discovery  in the late 18th century, the dramatic extinction of passenger pigeon \emph{Estopistes migratorius} in the early 20th century, the collapse of the Atlantic northwest cod fishery in the early 1990s, the extinction of sea urchins {\emph{Paracentrotus lividus}} in South Basin of Lough Hyne in Ireland in the early 2000s, and sharp decline of coral cover on the Great Barrier Reef. While collapse of a population is of primary concern for conservationists, early indicators of recovery of a system can play a crucial role in assessing the effectiveness of intervention strategies and resource management decisions \cite{clements}. Consequently, changes in abundance or fitness-related traits are usually monitored to find indicators of impending shifts in population dynamics of threatened species.
A common trend observed in the population of many endangered species is an abrupt transition from a seemingly steady state, persisting over many generations, to an unsustainable state that may lead to its extinction. The persistence of long transient phases in population ecology has been also documented in many empirical observations and theoretical models of various species \cite{Hastings1, Hastings2}, which then brings forth the subject of developing suitable mathematical methods to analyze transient dynamics in ecological models.

Abrupt shifts in ecosystems or ``regime shifts" have been attributed to bifurcations in the underlying system driven by slow changes in environmental parameters, random fluctuations or external forcing, the drifting rate of time-dependent parameters, or a combination of some of these factors  \cite{ashwin, SC, Scheffer, ZKE}. Early warning signals of regime shifts have usually been related to the phenomenon of ``critical slowing down" \cite{carpenter} that arises in the vicinity of local bifurcations, and 
 the commonly known measures for detecting them include examination of changes in trends of statistical measures such as variance, autocorrelation, and skewness in time series \cite{boettigera2, SC, Scheffer}. However, there is also a growing recognition of the leading role of long transient dynamics in explaining regime shifts and developing strategies for efficient ecological forecasting, since regime shifts or critical transitions between alternative states can be viewed as phenomena induced by transients-related  mechanisms \cite{Hastings2, morozovetal}. In this paper, we take the latter outlook to study regime shifts. Moreover, rather than considering statistical indicators of critical transitions, we take an alternative approach for detecting early warning signals by identifying and monitoring a suitable combination of the state variables that resulted from analyzing the mechanism driving the long transients in the system. Though the importance of long transients has been recognized in mathematical ecology \cite{Hastings1}, analytical techniques for studying transient dynamics in relevant timescales in three or higher-dimensional models is still at its mathematical infancy \cite{heggerudetal, sadhutrans}. With this spirit, in this paper, we adopt a dynamical systems approach to analyze long transients and apply the analysis to predict an impending transition in population dynamics of a  class of three-species predator-prey models exhibiting bistability between a limit cycle and a boundary equilibrium state in a parameter regime near a {\em{singular zero-Hopf bifurcation}} \cite{BE, BB, DGKKOW, G, GH, K}. The goal is to predict the long-term behaviors of solutions exhibiting similar oscillatory dynamics as  transients and devise a method of  identifying an early warning signal significantly in advance of an abrupt transition that could lead to an extinction. %

The work in this paper is inspired by the dynamics of the model studied in \cite{Sadhunew} which reads as
\begin{eqnarray}\label{maineq} \left\{ 
\begin{array}{ll}\label{1}
      \frac{dR}{dT} &= rR\left(1-\frac{R}{K}\right)-\frac{p_1RC_1}{H_1+R}-\frac{p_2RC_2}{H_2+R}\\
        \frac{dC_1}{dT} &= \frac{q_1p_1RC_1}{H_1+R}-m_1C_1 -a_{12}C_1C_2\\
         \frac{dC_2}{dT} &= \frac{q_2p_2RC_2}{H_2+R}-m_2C_2 -a_{21}C_1C_2 -a_{22}C_2^2,
       \end{array} 
\right. \end{eqnarray}
where $R$ represents the population density of the prey; $C_1$, $C_2$ represent the densities of the two species of  competing predators;  $r>0$ and $K>0$ are the intrinsic growth rate and the carrying capacity of the prey; $p_1>0$ is the maximum per-capita predation rate of $C_1$, $H_1>0$ is the semi-saturation constant which represents the prey density at which $C_1$ reaches half of its maximum predation rate ($p_1/2$),  $q_1>0$ is the birth-to-consumption ratio of $C_1$, $m_1>0$ is the  per-capita natural death rate of $C_1$, and $a_{12}>0$ is the rate of adverse effect of $C_2$ on $C_1$. The other parameters $p_2, q_2, m_2, H_2, a_{21}$ are defined analogously and $a_{22}>0$ represents the intraspecific competition within $C_2$. It is assumed that $R$ evolves on a faster timescale than $C_1$ and $C_2$, and $C_2$ is a stronger competitor than $C_1$. Depending on the relative strengths of $a_{22}$ and $a_{21}$, $C_2$ may either outcompete $C_1$, or the three species may coexist in an equilibrium or an oscillatory state in the form of small-amplitude oscillations, \emph{mixed-mode oscillations} \cite{BKW, DGKKOW, SCT}, or relaxation oscillations. 

System (\ref{maineq}) in its non-dimensional form (see  \cite{Sadhunew} for the details) can be written as
\begin{eqnarray}\label{nondim3}    \left\{
\begin{array}{ll} \zeta\dot{x}&= x\left(1-x-\frac{y}{\beta_1+x}-\frac{z}{\beta_2+x}\right):=x\phi(x,y,z)\\
 \dot{y}&=y\left(\frac{x}{\beta_1+x}-d_1-  \alpha_{12} z \right):=y\chi(x,y,z)\\
   \dot{z} &=   z\left(\frac{x}{\beta_2+x}-d_2 -\alpha_{21} y-hz \right):=z\psi(x,y,z),
       \end{array} 
\right. \end{eqnarray}
where the overdots  denote differentiation with respect to the slow time $s = \zeta r T$, where $\zeta$ measures the ratio of the growth rates of the predators to the prey and satisfies $0<\zeta \ll 1$. The nontrivial $x$, $y$, and $z$-nullcline in (\ref{nondim3}) are denoted by  $\phi=0$, $\chi=0$, and $\psi=0$ respectively. The dimensionless parameters $\beta_1$, $d_1$ and $\alpha_{12}$ respectively represent the predation efficiency \cite{BD1}, rescaled mortality rate and rescaled interspecific competition coefficient of $y$. The parameters $\beta_2$, $d_2$, and $\alpha_{21}$ are analogously defined and $h$ represents the rescaled intraspecific competition coefficient within $z$.

Interestingly, it was observed  in \cite{Sadhunew} that for suitable parameter values near a subcritical singular Hopf bifurcation,  system (\ref{nondim3})  exhibits bistability between a small-amplitude limit cycle and a boundary equilibrium state which corresponds to the  extinction of the weaker predator (figure 21 in \cite{Sadhunew}).  It turns out that in this regime, the dynamics are organized by the invariant manifolds of a nearby saddle-focus coexistence equilibrium point, allowing a solution that starts near the  coexistence equilibrium point to approach it along its two-dimensional stable manifold and then escape along its one-dimensional unstable manifold either towards the boundary equilibrium state or towards a nearby limit cycle (see figure \ref{initial_transients}).  In both cases, the local dynamics near the coexistence equilibrium appear very similar and are characterized by high-frequency oscillations with slowly varying amplitude along the invariant manifolds of the equilibrium. The similarity in the initial trends makes identification of  early warning signals of a population collapse extremely challenging (see figures \ref{timeseries_all} and \ref{initial_transients}), which is one of the subjects of investigation in this work. 

   \begin{figure}[htbp]    
  \centering 
  \subfloat[IC: $(0.2785, 0.1181, 0.4164)$]{\label{timeseries_all_a}\includegraphics[width=6.6cm]{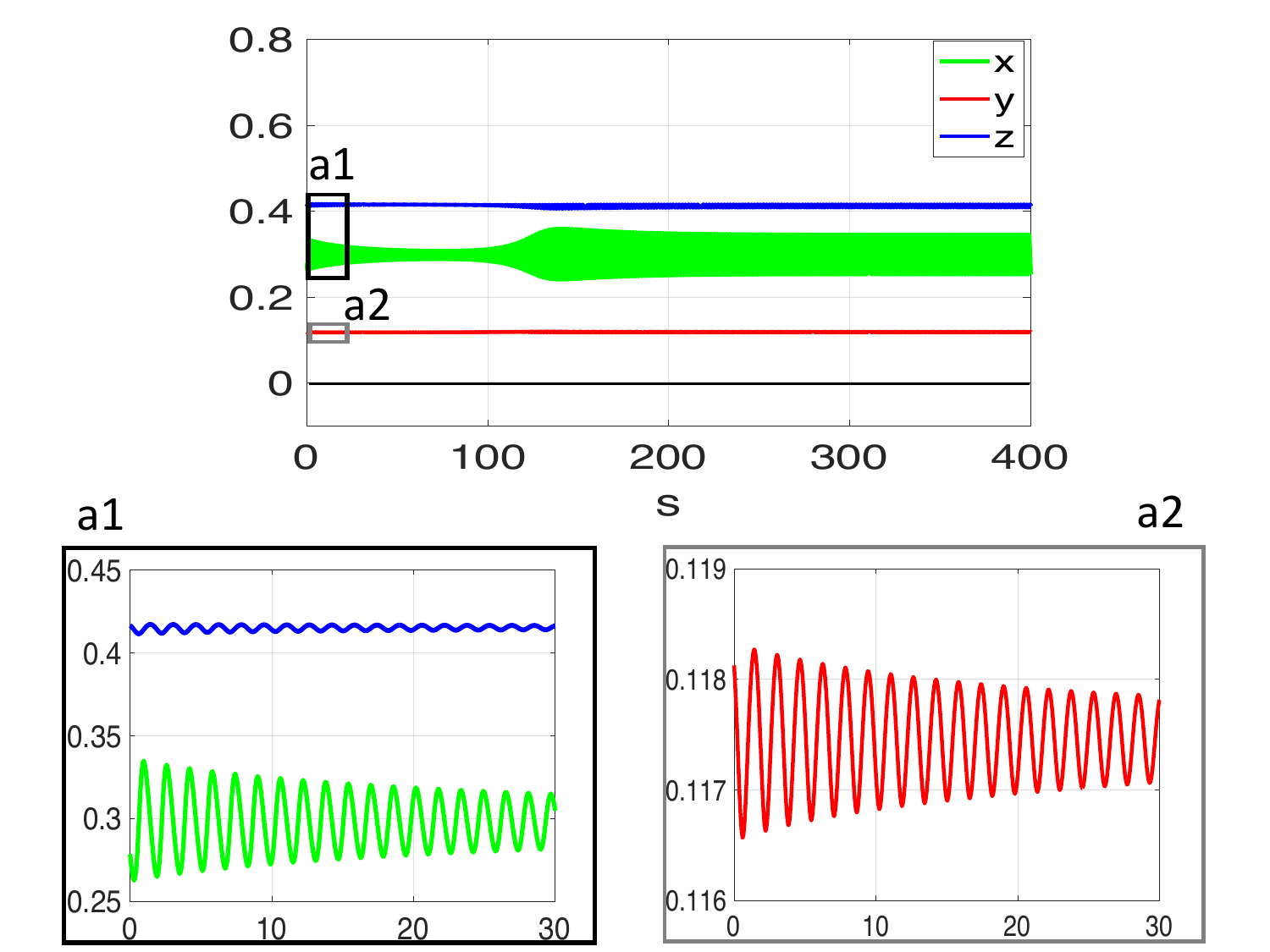}}
    \subfloat[IC: $(0.2831,0.1184, 0.417)$]{\label{timeseries_all_b}\includegraphics[width=6.6cm]{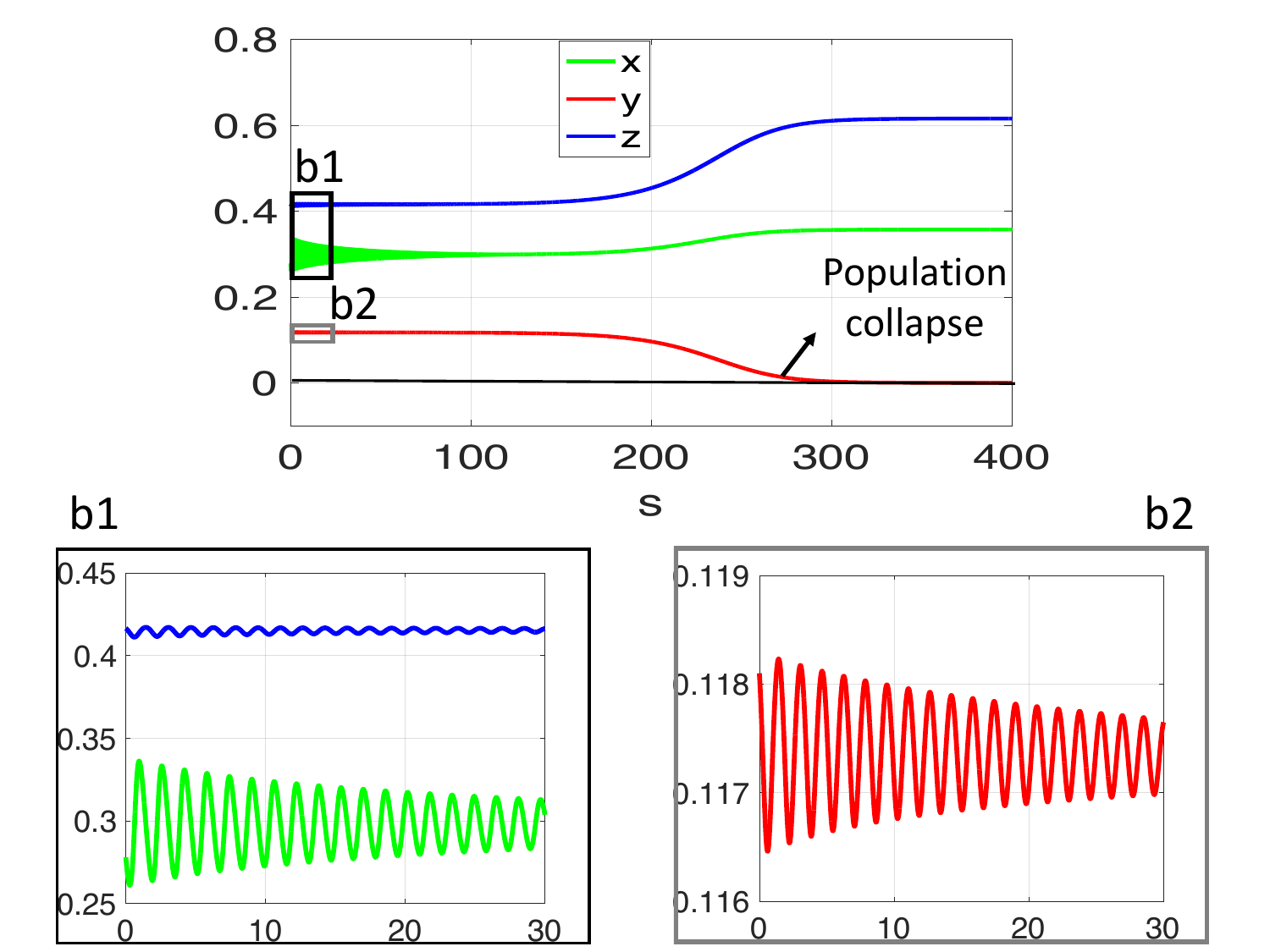}}
 \caption{An illustration of bistable behavior exhibited by system (\ref{nondim3}) for $h=0.2649$ and other parameter values as in (\ref{parvalues})). Both solutions exhibit similar patterns of oscillations initially (see insets a1-a2 and b1-b2) but  have different asymptotic behaviors. IC: initial conditions.
 }
  \label{timeseries_all}
\end{figure} 

 \begin{figure}[h!]     
  \centering 
         \subfloat[IC: $(0.2785, 0.1181, 0.4164)$]{\includegraphics[width=6.5cm]{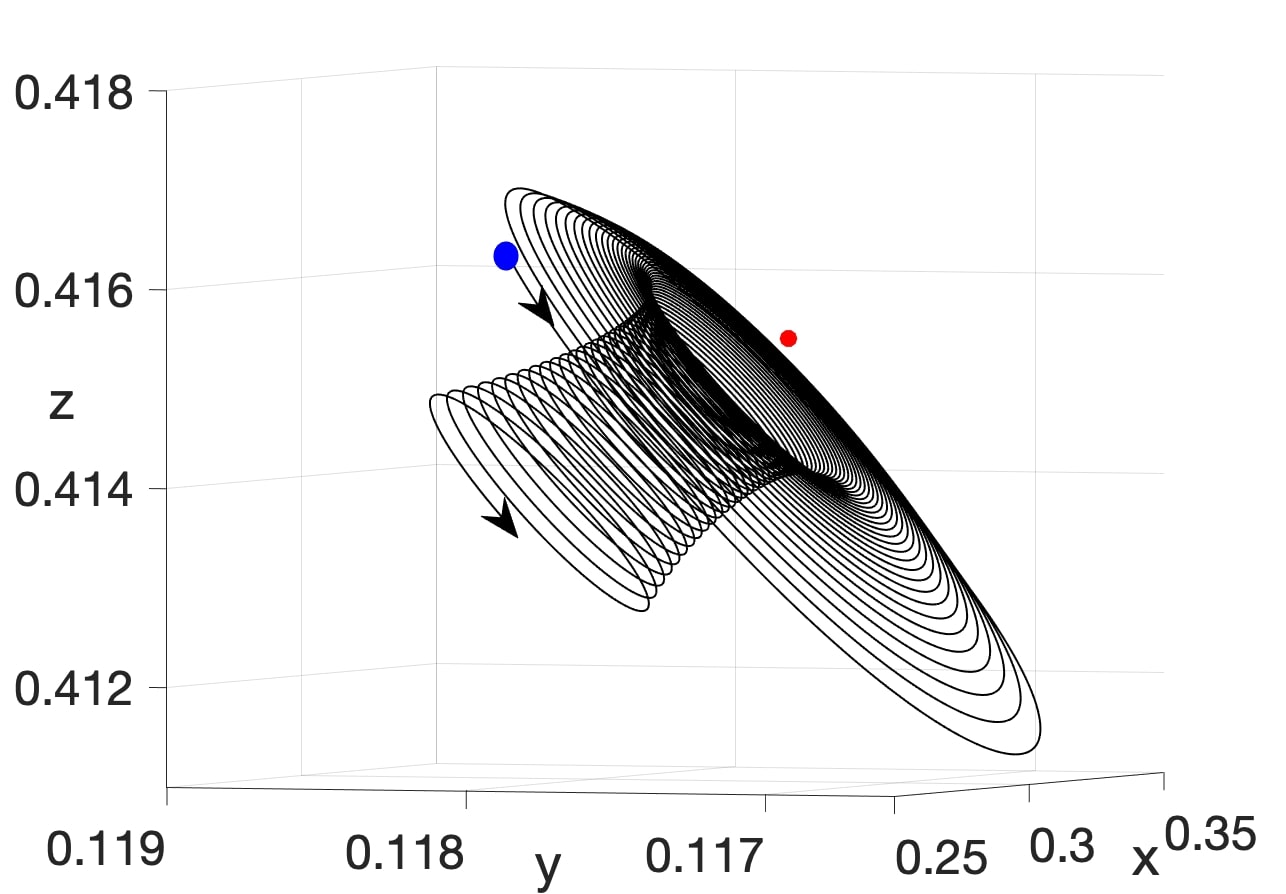}}\quad 
      \subfloat[IC: $(0.2831,0.1184, 0.417)$]{\includegraphics[width=6.5cm]{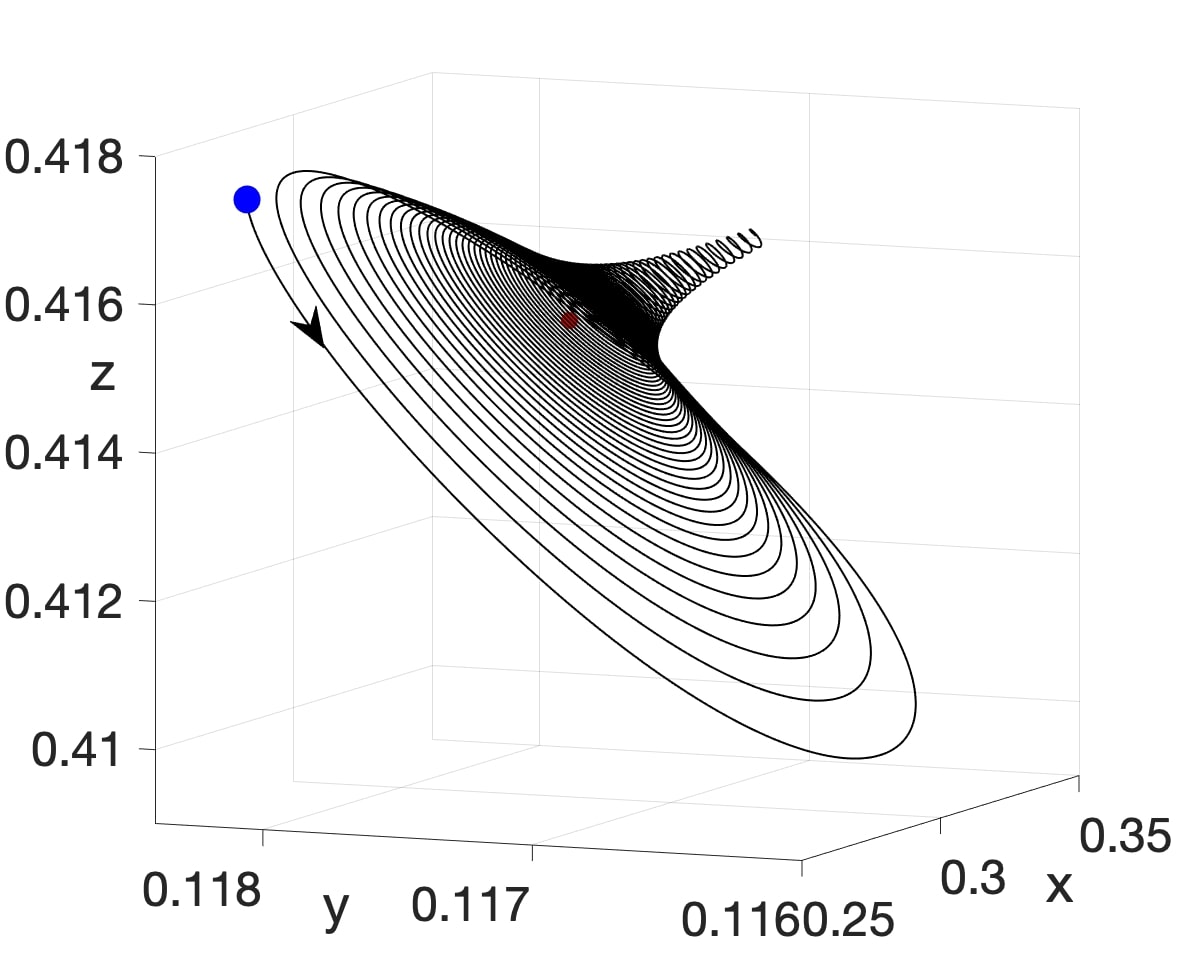}}
 \caption{The phase portraits of the transient dynamics  corresponding to the time series in figure \ref{timeseries_all} for $ s \in [0, 100]$.  The solutions spend a significant amount of time near the coexistence equilibrium point shown by the red dot before approaching their asymptotic states. The initial conditions are shown by the blue dots. (A) The solution asymptotically approaches a periodic attractor. (B) The solution asymptotically approaches a boundary equilibrium point.}
 \label{initial_transients}
\end{figure}

Another aspect that is considered in this study pertains to an analysis of the mechanism underlying long transients in systems exhibiting such bistable dynamics. Some of the commonly known mechanisms responsible for the emergence of long transients in ecological models include the proximity of a system to a ghost attractor, slow passage through a saddle known as a ``saddle crawl-by", presence of slow manifolds in systems with timescale separation, or effects of stochasticity or time delays \cite{Hastings2, morozovetal}. However, in this paper, we reveal another mechanism that can lead to long transients, namely, a slow passage through a saddle-focus equilibrium that lies near a zero-Hopf bifurcation in the parameter space. It turns out that the proximity of the system to a zero-Hopf bifurcation results in a weak contraction and a weak expansion along the stable and unstable manifolds of the saddle-focus equilibrium, allowing a solution to spend a significant amount of time near the invariant manifolds as it passes through the equilibrium,  thus generating long transients as shown in figure \ref{initial_transients}. To the best of our knowledge, such a mechanism underlying long transients in three or higher-dimensional models has not been explored yet.

To analyze the bistable behavior and the transient dynamics, the general class of models with two intrinsic timescales, which includes system (\ref{maineq}) as a special case,  is reduced to a  topologically equivalent class, referred to as the normal form \cite{BB}, which is valid near the singular Hopf point where certain conditions on the derivatives hold. The normal form consists of two-fast variables and one-slow variable, where the slow variable represents the slowly changing dynamics along the axial direction corresponding to the unstable set of the saddle-focus equilibrium, while the fast variables feature rapid oscillations with slow variation in amplitude that occur along the stable set of the equilibrium.  Consequently, the transformed coordinates system associated with the normal form naturally captures the ``near cylindrical symmetry" and the intrinsic nature of the  dynamics near the saddle-focus equilibrium. Exploiting the separation of timescales between the rapid oscillations and the slow variation in the amplitude, we apply the method of averaging \cite{GH, V} to partition the system into the sum of its time-averaged and fluctuating parts and consider the equation of the time evolution of the mean of the oscillations in the axial direction. We solve the associated ODE and express the solution in terms of the mean of the oscillations of the fast subsystem, and study its properties. The analysis is then used to find a set of sufficient conditions to distinguish between solutions that start in a vicinity of the saddle-focus point with contrasting long-term behaviors.

The normal form also reveals the underlying geometrical structure of the system and gives insight into the landscape of the basin boundary of the two attractors in the vicinity of the saddle-focus equilibrium. We explicitly find a region in the phase space, bounded by the surface of an elliptic paraboloid, that lies in the basin of attraction of the periodic attractor. The 3-D region, which we refer to as the ``funnel" encloses a branch of the unstable manifold of the saddle-focus equilibrium and separates orbits approaching the limit cycle from the point attractor. We prove that a solution in the vicinity of the stable manifold of the saddle-focus equilibrium must enter into this ``funnel" to asymptotically approach the periodic attractor. We find a set of necessary and sufficient conditions on the initial data corresponding to which a solution eventually enters the funnel as it escapes along the unstable manifold of the saddle-focus equilibrium. Consequently, we obtain a critical threshold on the slowly varying dynamics along the unstable manifold, such that if a solution goes below it, it must approach the boundary equilibrium state. Finally, the analysis is used to devise a method for finding early warning signals of the onset of a drastic change in the population of one of the species of predators. The results are in good agreement with the numerical simulations carried out for  system (\ref{maineq}).

The remainder of the paper is organized as follows. In Section \ref{sec:gen_formulation}, we present the general formulation of the class of equations and lay down the assumptions. We briefly discuss the geometric structure of system (\ref{maineq}) and perform few numerical investigations to gain insight into the dynamics.  In Section \ref{sec:normalform}, we reduce the general system to a topologically equivalent form, which is valid near the singular Hopf bifurcation and analyze the equivalent system. We obtain a set of sufficient conditions on the initial data in a neighborhood of the coexistence equilibrium to lie in the basin of attraction of the periodic attractor.  Using the analysis, we then devise a method of detecting an early warning signal of population collapse in Section \ref{sec:early_warning}. The results are supported by numerical simulations in Section \ref{sec:num_results}. Finally, we summarize our conclusion in Section \ref{sec:conclusion}.

\section{Mathematical Model}
 \label{sec:gen_formulation}

\subsection{General Formulation} The class of equations under consideration is of the form
\begin{eqnarray}\label{normal1}    
\left\{
\begin{array}{ll} \zeta \dot{x} &=f_1(x, y, z,h) :=x\phi(x,y,z,h) \\
    \dot{y}&=f_2(x, y, z,h) :=y\chi(x,y,z,h) \\
    \dot{z}&= f_3(x,y, z,h):= z\psi(x,y,z,h),
       \end{array} 
\right. 
\end{eqnarray} 
where $x, y, z$ rehresent the population densities of the prey and the two species of predators respectively, $p$ is a parameter in a compact subset of $\mathbb{R}$ and $\phi,\  \chi$, and $\psi$ are smooth functions having the properties described by (H1)-(H7) and (Q1)-(Q5) below. The overdots in (\ref{normal1}) denote differentiation with respect to the time variable $s$ and $0<\zeta \ll 1$ represents a singular parameter. We assume that the prey evolves on a faster timescale than the predators, a phenomenon commonly observed in many ecosystems \cite{BD1, DH, MR, MR1}. We make the following assumptions:\\

(H1)  $\phi(0,0,0,h)>0$, $\chi(0, 0,0, h)<0$, $\psi(0, 0, 0, h)<0$, $\phi(1,0,0,h)=0$, $\chi(1, 0, 0, h)>0$, $\psi(1, 0, 0,h)>0$ and $\phi_x(1, 0, 0, h) <0$.\\

(H2) $\phi_x(x, y, z, h)=0$ defines a smooth curve $\mathcal{F}$ on the surface $S:=\{(x, y, z) \in \mathbb{R}^3: \phi(x, y, z ,h)=0\}$ dividing $S$ into two smooth surfaces:
\[ S^a=\{(x, y, z)\in S : \phi_x(x,y, z, h)<0\},\   S^r=\{(x, y, z)\in S : \phi_x(x,y, z, h)>0\}.\]

Condition (H1) implies that the equilibrium points $(0, 0, 0)$ and $(1, 0, 0)$ are  saddles, where $(0, 0, 0)$ is attracting in the invariant $yz$-plane and repelling along the invariant $x$-axis while $(1, 0, 0)$ is attracting along the invariant $x$-axis and repelling along the $yz$ directions. Depending on  $h$, (\ref{normal1}) may admit other equilibria, some of which may lie on the $xy$-plane or the $xz$-plane. These equilibria will be referred to as the boundary equilibria and will be denoted by $E_{xy}$ and  $E_{xz} $ respectively. Rescaling $s$ by $\zeta$ and letting $t=s/\zeta$, system (\ref{normal1}) can be reformulated as \begin{eqnarray}\label{normal1_fast}    
\left\{
\begin{array}{ll} x' &=f_1(x, y, z,h) \\
y'&=\zeta f_2(x, y, z,h)  \\
z'&=\zeta  f_3(x,y, z,h),
       \end{array} 
\right. 
\end{eqnarray}
where the primes denote differentiation with respect to $t$. System (\ref{normal1_fast}) is referred to as the fast system, whereas (\ref{normal1}) as the slow system. 
The set of equilibria of (\ref{normal1_fast}) in its singular limit is called the {\emph{critical manifold}}  \cite{DGKKOW},  $\mathcal{M}: =\Pi\cup S$, where $\Pi$ is the invariant $yz$ plane and $S$ is the surface as defined in (H2).   
Hypothesis (H2) implies that the surface $S^a$ is normally attracting while $S^r$ is normally repelling with respect to the limiting fast system. The normal hyperbolicity of $S$ is lost via saddle-node bifurcations along the fold curve $\mathcal{F}$. 

We assume that there exists a point $(\bar{x}, \bar{y}, \bar{z}, \bar{h})$ such that the linearization of (\ref{normal1}) at this point has a pair of eigenvalues that approach infinity as $\zeta \to 0$ and the following conditions on $f_1$, $f_2$, $f_3$ and their derivatives hold at $(\bar{x}, \bar{y}, \bar{z}, \bar{h})$:

 (H3) $\bar{\phi}=0,\  \bar{\chi}=0,\  \bar{\psi}=0$.

 (H4) $\det J \neq 0$, where $J=  \begin{pmatrix}
(\bar{f}_1)_ x  & (\bar{f}_1)_y  &(\bar{f}_1)_z\\
(\bar{f}_2)_x & (\bar{f}_2)_y  & (\bar{f}_2)_z\\
(\bar{f}_3)_x & (\bar{f}_3)_y  & (\bar{f}_3)_z
 \end{pmatrix}$.

 (H5) $\bar{\phi_x}=0$,  $\bar{\phi}_{xx} \neq 0$.\\

 (H6)  $\begin{pmatrix}
 (\bar{f}_1)_y  & (\bar{f}_1)_z
 \end{pmatrix}\begin{pmatrix}
 (\bar{f}_2)_x \\
 (\bar{f}_3)_x 
 \end{pmatrix} <0.$

 (H7) $-\begin{pmatrix} (\bar{f}_1)_{xx} & (\bar{f}_1)_{xy} & (\bar{f}_1)_{xz}
\end{pmatrix} J^{-1}
 \begin{pmatrix}
(\bar{f}_1)_h  \\
(\bar{f}_2)_h\\
(\bar{f}_3)_h
 \end{pmatrix} +(\bar{f}_1)_{xh}\neq 0$.

Here the bars denote the values of the expressions evaluated at $(\bar{x}, \bar{y}, \bar{z}, \bar{h})$.
 Note that  (H3) indicates that $(\bar{x}, \bar{y}, \bar{z})$ is a coexistence equilibrium of system (\ref{normal1}) for $h=\bar{h}$. Hypothesis (H4) implies the existence of a smooth family of equilibria $(x_0(h), y_0(h), z_0(h))$ in a neighborhood of $\bar{h}$ via the implicit function theorem. 
 Hypothesis (H5) indicates that $(\bar{x}, \bar{y}, \bar{z})$ is a non-degenerate fold point.   Hypothesis (H6) implies that the linearization of system (\ref{normal1})  at the family of equilibria $E^*(h)=(x_0(h), y_0(h), z_0(h))$ admits a pair of eigenvalues with singular imaginary parts for sufficiently small $\zeta$ (see Prop. 1 in \cite{BB}). Finally (H7) implies that $\frac{d\sigma}{dh}\neq 0$ at $(\bar{x}, \bar{y}, \bar{z},\bar{h})$, where $\sigma$ is the real part of the pair of eigenvalues with singular imaginary parts of the linearization of system (\ref{normal1}) at the equilibrium.

The family of equilibria $(x_0(h), y_0(h), z_0(h))$ will be referred to as the coexistence equilibria and will be denoted by $E^*(h)$. The family undergoes a {\emph{singular Hopf bifurcation}} \cite{BB, DGKKOW} at $h_H=\bar{h}+O(\zeta)$ for  sufficiently small $\zeta>0$ (see Theorem \ref{normal}). 
Finally we assume that\\

 (Q1) The equilibrium  $E^*(h)=(x_0(h), y_0(h), z_0(h))$ switches from a saddle-focus with a two-dimensional stable and a one-dimensional unstable manifold to an unstable focus at a subcritical  Hopf bifurcation $h_H$, where a unique curve of saddle cycles, $\Gamma_h$, bifurcates from it. Furthermore, if $\lambda_{1, 2}(h)\in \mathbb{C}$ and $\lambda_3(h)>0$ are eigenvalues of the linearized system at $E^*(h)$ near $h_H$, then $\lambda_3(h) =O(|\Re(\lambda_{1,2})(h)|)$ for all $|h-h_H|=O(\zeta)$.\\
 
 (Q2)  If $Q_h$ be a Poincar\'e map associated with  $\Gamma_h$, then $Q_h$ undergoes a subcritical Neimark-Sacker bifurcation \cite{K} at an $O(\zeta)$ neighborhood of $h_H$, resulting into the stabilization of $\Gamma_h$.\\

 (Q3) There exists a family of asymptotically stable attractors $\tilde{\Gamma}(h)$ for all $|h-\bar{h}|=O(\zeta)$ such that $\Gamma(h)$ and $\tilde{\Gamma}(h)$  lie on opposite sides of the tangent plane, $\Sigma(h)$, to the local stable manifold of  $E^*(h)$. Furthermore, ${\bf{F}}\cdot {\bf{n}}>0$, where ${\bf{F}}$ is the vector field corresponding to system (\ref{normal1}) and ${\bf{n}}$ is the normal vector to $\Sigma$ pointing towards the lower-half space, $\Sigma^-$,  containing $\tilde{\Gamma}(h)$ (say).\\
 
  (Q4) The unstable manifold of the saddle-focus equilibrium $E^*(h)$  approaches the stable manifold of $\Gamma(h)$ in the upper-half space $\Sigma^+$, whereas it tends to the stable manifold of $\tilde{\Gamma}(h)$ in the lower-half space $\Sigma^-$ for all $|h-h_H|=O(\zeta)$.\\
  
  (Q5) There exists no other attractors other than $\Gamma(h)$ that lies in an immediate vicinity of $E^*(h)$.

  \begin{remark} \label{hypth} It follows from (Q3) that the lower half-space $\Sigma^-$ is positively invariant with respect to the flow generated by the vector field in (\ref{normal1}). Furthermore, (Q4) indicates that there exist maximal open sets $\mathcal{U}, \mathcal{V} \subset \Sigma^+$ around $E^*$ such that the flow from $\mathcal{U}$ and  $\mathcal{V}$  asymptotically approaches $\Gamma(h)$ and $\tilde{\Gamma}(h)$ respectively in the parameter regime where $\Gamma(h)$ and  $\tilde{\Gamma}(h)$ coexist as locally asymptotically stable attractors.
  \end{remark}

    \begin{figure}[h!]     
  \centering 
{\includegraphics[width=8.8cm]{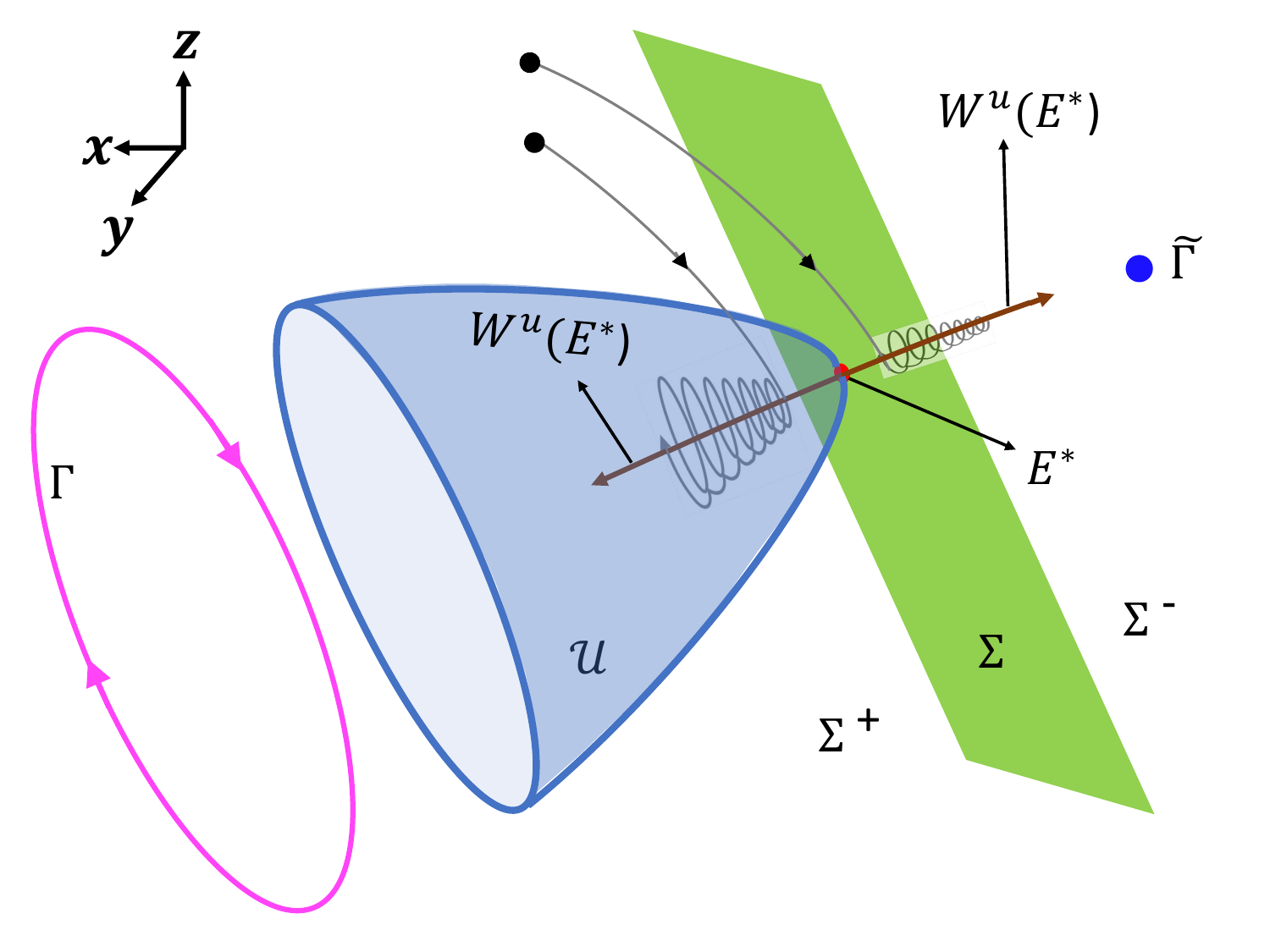}}
 \caption{A schematic representation of the geometrical configuration of system  (\ref{normal1}) near $E^*$. Shown are the open set $\mathcal{U}$, the tangent plane $\Sigma$ of $W^s(E^*)$, and two representative solutions starting in $\Sigma^+$ where one of them approaches $\Gamma$ and the other approaches $\tilde{\Gamma}$.}
 \label{geom_confg}
\end{figure}
 
Figure \ref{geom_confg} shows a schematic view of the dynamics in a small neighborhood of $E^*$ and the geometrical configuration that supports the bistable behavior in system  (\ref{normal1}).  The two attractors represented by $\Gamma$ and $\tilde{\Gamma}$ lie on the opposite sides of the tangent plane $\Sigma$. For two solutions starting in $\Sigma^+$ in a vicinity of $E^*$ (that may spend a significant amount of time near $E^*$ before getting repelled (c.f figure \ref{initial_transients} and figure \ref{normal_ratio_bdd_unbdd}), the one that enters into $\mathcal{U}$ approaches $\Gamma$, whereas the other that doesn't enter $\mathcal{U}$, crosses $\Sigma$ and approaches $\tilde{\Gamma}$.

\subsection{A predator-prey model}

An example of the class (\ref{normal1}) that satisfies hypotheses (H1)-(H7) along with (Q1)-(Q5) is system (\ref{nondim3}).  As in \cite{Sadhunew}, we will assume that   $0< d_1, \ d_2, \ \beta_1,\  \beta_2\  \alpha_{12}, \ \alpha_{21} <1$. It is easy to verify that system (\ref{nondim3}) satisfies the assumptions stated in (H1)-(H2).
In the singular limit, the {\emph{reduced flow}} corresponding to (\ref{nondim3}), restricted to the surface $S$, has singularities along the fold curve $\mathcal{F}$, referred to as the {\em{folded singularities}} or {\em{canard points}} \cite{DGKKOW}.  These singularities are analyzed by desingularizing the reduced flow (see \cite{Sadhunew}). 
 If an equilibrium of (\ref{nondim3}) and a folded singularity merge together and split again, interchanging their type and stability, then a {\emph{folded saddle-node bifurcation of type II}} (FSN II)  occurs \cite{DGKKOW}, followed by  a singular Hopf bifurcation in an $O(\zeta)$ neighborhood of  the FSN II bifurcation.

  \begin{figure}[h!]     
  \centering 
{\includegraphics[width=8.8cm]{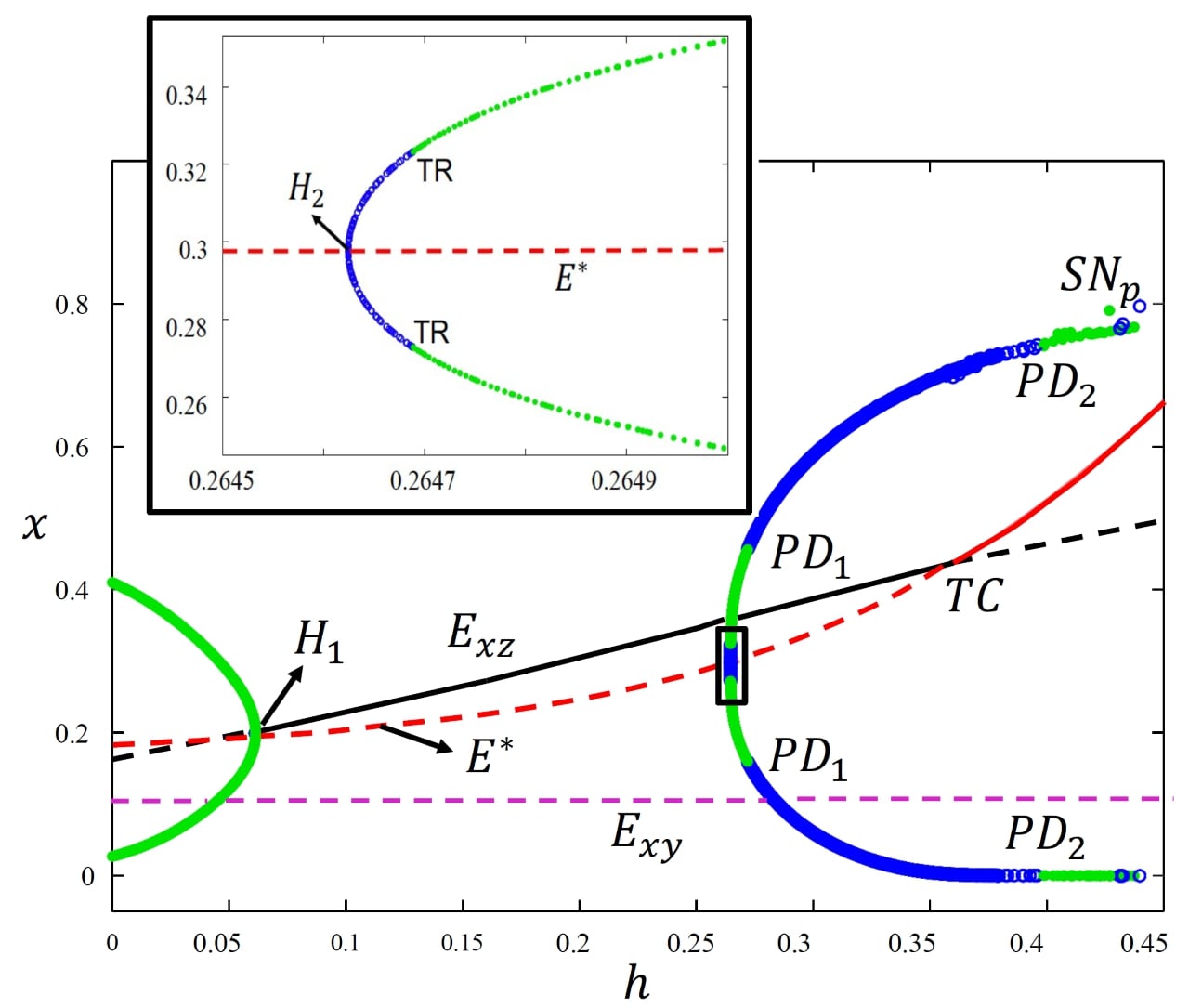}}
 \caption{One-parameter bifurcation diagram of system (\ref{nondim3}) as $h$ is varied. Stable branches of equilibria are represented in solid and unstable branches by dotted curves. Stable limit cycles are in green and unstable cycles are in blue.  The inset shows the bifurcation structure near the Hopf point $H_2$. $H_1$, $H_2$: Hopf bifurcation, $PD_1, PD_2$: period-doubling bifurcation, $TR$: torus bifrucation, $SN_p$: saddle-node of periodics, $TC$: transcritical bifurcation.}
 \label{one_par_bif}
\end{figure}

\subsection{Numerical investigations of system (\ref{nondim3})}
  Treating the intraspecific competition $h$ as the primary bifurcation parameter and the predation efficiency $\beta_1$ as the secondary parameter, system (\ref{nondim3}) was analyzed in  \cite{Sadhunew}. 
  In this paper, we will treat $h$ as the input parameter and  
  fix the other parameter values to
 \bes
\label{parvalues}   \beta_1=0.1923,  \  \beta_2 =0.6,\ d_1=0.4, \ d_2=0.21,\ \alpha_{12} =0.5, \ \alpha_{21}=0.1,\  \zeta =0.01.
 \ees

Using XPPAUT \cite{Ermen}, we compute a one-parameter bifurcation diagram as shown in figure \ref{one_par_bif}, where the maximum and minimum values of $x$ are considered along the vertical axis.
We note from figure \ref{one_par_bif}  that the boundary equilibrium state $E_{xy}$ is always unstable (exists as a saddle in this parameter range), whereas the stabilities of  the boundary equilibrium $E_{xz}$ and  the coexistence equilibrium state $E^*$  change with $h$. For smaller values of $h$, $E_{xz}$ exists as an unstable focus/node until it undergoes a supercritical Hopf bifurcation at $H_1\approx 0.0613$,  giving birth to a family of stable periodic orbits that lie in the invariant $xz$-plane, and switches to a locally asymptotically stable node/focus. The coexistence equilibrium $E^*$ persists as a saddle or a saddle-focus with one positive and two complex (with negative real parts) eigenvalues for $h< 0.2646$ until it undergoes a subcritical Hopf bifurcation $H_2\approx 0.2646$ and switches to an unstable focus. The two equilibria branches corresponding to $E^*$ and $E_{xz}$ undergo a transcritical bifurcation $TC \approx 0.3577$, past which  $E_{xz}$ exists as an unstable focus/node and $E^*$ as a stable node.  Nearby $H_2$, a subcritical torus bifurcation occurs at  $TR \approx 0.2647$,  resulting into the stabilization of the saddle cycles $\Gamma_h$  born at $H_2$.  On further increasing $h$, the  family of periodic attractors $\Gamma_h$  loses stability at a period-doubling bifurcation $PD_1 \approx 0.272$ and re-gains stability at a second period-doubling bifurcation $PD_2 \approx 0.3986$. Mixed-mode oscillations \cite{BKW, DGKKOW} are observed in this regime. Soon after, the family experiences  a saddle-node bifurcation  $SN_p\approx 0.43143$, after which relaxation oscillations are observed. Further details of the bifurcation diagram are beyond the scope of this paper.

An FSN II bifurcation occurs at $\bar{h}\approx 0.2656$ where the coordinates of the equilibrium $E^*$  are $(\bar{x}, \bar{y}, \bar{z}) \approx (0.2987, 0.1167, 0.4167)$.   It can be checked that system (\ref{nondim3}) satisfies (H3)-(H7) at  $(\bar{x}, \bar{y}, \bar{z}, \bar{h})$.  Note that $H_2$ and $TR$ lie in an $O(\zeta)$ neighborhood of $\bar{h}$. In a small parameter regime past $TR$, where $E^*$ is a saddle-focus and $E_{xz}$ is locally asymptotically stable,  the system exhibits  bistability between  $E_{xz}$ and $\Gamma_h$ as shown in figures \ref{timeseries_all} and \ref{bistable_phasespace} (also see figure \ref{initial_transients}). For the parameter values chosen in figure \ref{timeseries_all}, the eigenvalues of the linearized system at $E^*$ are $\lambda_{1, 2} \approx - 0.02 +3.99 i$, and $\lambda_3 \approx 0.04$, and thus hypothesis (Q1) is met. We note that $\Gamma_h$ has stabilized via a torus bifurcation and thus (Q2) holds. Furthermore, as shown in figure \ref{geom_confg} and figure \ref{bistable_phasespace}, the attractors $E_{xz}$ and $\Gamma$ lie on the opposite sides of the stable eigenplane of $E^*$ and it turns out that the lower branch of the unstable manifold of $E^*$ approaches $E_{xz}$ while the upper branch spirals onto the stable manifold of $\Gamma_h$. Thus system (\ref{nondim3}) also satisfies hypotheses (Q3)-(Q5). 

    \begin{figure}[h!]     
  \centering 
{\includegraphics[width=12.8cm]{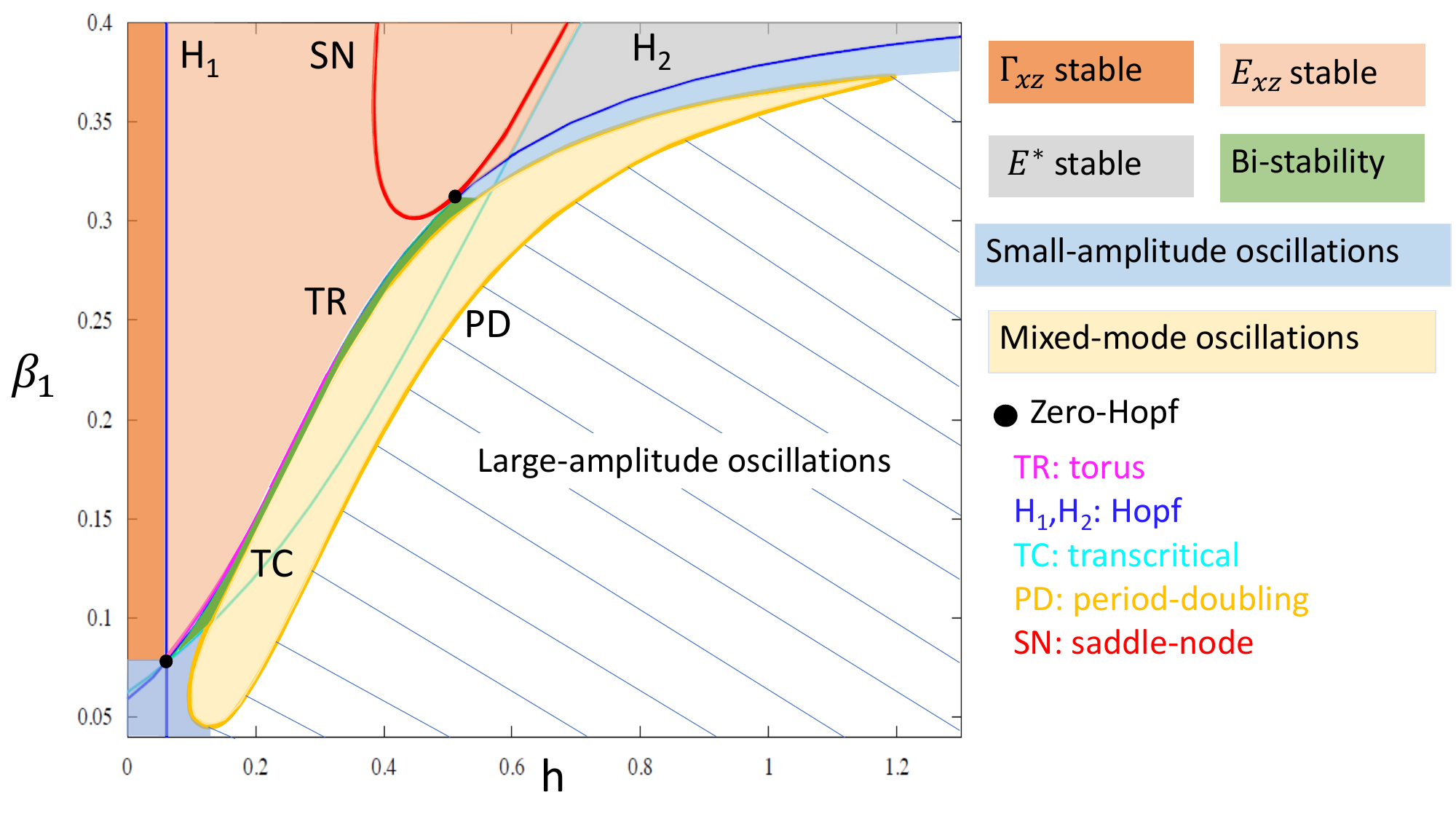}}
 \caption{Two-parameter bifurcation diagram of system (\ref{nondim3}) in $(\beta_1, h)$ space. $E^*$ : coexistence equilibrium, $E_{xz}$: boundary equilibrium, $\Gamma_{xz}$: limit cycle in the $xz$ plane.}  
 \label{two_par_bif}
\end{figure}

To visualize transitions between the different dynamical regimes, we consider a two-parameter bifurcation diagram in $(\beta_1, h)$ space as shown in figure \ref{two_par_bif}, where we present the locally asymptotically stable dynamics existing in these regions. Numerical continuations of $H_1$ and $H_2$ in figure \ref{one_par_bif} divide the parameter space into oscillatory and non-oscillatory dynamical regimes based on the stabilities of $E_{xz}$ and $E^*$. The two curves of equilibria $E_{xz}$ and $E^*$ meet along a transcritical bifurcation $TC$, where they exchange their stabilities. A saddle-node bifurcation of $E^*$ occurs along the SN curve. The equilibria lying on the SN curve or in the region enclosed by the SN curve may not be biologically feasible (see \cite{Sadhunew} for more details). Two codimension-two bifurcations, namely zero-Hopf bifurcations, $ZH_1$ and $ZH_2$, shown by closed circles, occur at the intersection between $H_1$ and $TC$ and at the tangential intersection between $H_2$ and $SN$. The criticalities of the Hopf bifurcation curves change at the zero-Hopf points; $H_1$ is supercritical above $ZH_1$ and subcritical below it, while $H_2$ is supercritical to the right of $ZH_2$  and subcritical to its left.  The two zero-Hopf points are connected by a  torus bifurcation TR curve that emanates from $ZH_2$ and terminates at $ZH_1$. Continuation of the period-doubling bifurcations $PD_1$ and $PD_2$ in figure \ref{one_par_bif}  marks the boundary of the regime where mixed-mode oscillations persist, separating the small-amplitude oscillations born from the TR bifurcation from the large-amplitude oscillations. A bistable behavior is observed in the region enclosed by the TR and PD curves, which is our regime of interest in this paper.

\begin{remark}\label{zerphopf} The parameter regime chosen in system (\ref{nondim3}) lies in a vicinity of the curve of torus bifurcation that emanates from the zero-Hopf/fold-Hopf bifurcation point in the parameter space. The proximity to such a codimension-two bifurcation point leads to long transients that exhibit quasi-stationary dynamics, featuring very slowly decaying amplitude with rapid oscillations, in the vicinity of $E^*$  as seen in figure \ref{timeseries_all}.
\end{remark}

%
%
 In system  (\ref{nondim3}) we noted that in a neighborhood of singular Hopf bifurcation, the invariant manifolds of the saddle-focus equilibrium play central roles in organizing the bistable dynamics.  One way of analyzing the bistable behavior is by numerical computation of the invariant manifolds of the equilibria or the limit cycle and studying the dynamics generated by their interaction. Computations of such manifolds are numerically challenging as they involve stiffness related issues. Another approach is to reduce the system to a topologically equivalent form near the FSN II point on which a simpler geometric treatment  can be applied. In this paper, we will take the latter approach. To characterize the local dynamics  near $E^*$, we will reduce system  (\ref{normal1})  to its normal form, which will be valid in a small neighborhood of $E^*$,  and analyze it. The analysis will be then used to find an early warning sign of an impending population shift. We note that the local analysis performed near the singular Hopf point corresponds to the dynamics of the blown-up vector field on the central chart in the context of geometric desingularization \cite{ks1, ks2}. For orbits that start far from the vicinity of the stable manifold of $E^*$, one needs to consider appropriate phase-directional charts and define suitable global return maps to connect with the dynamics on the central chart. In this paper, we only focus on dynamics that start in a vicinity of the stable manifold of $E^*$.

\section{Normal form near the singular Hopf bifurcation}
\label{sec:normalform}

  A normal form for singular Hopf bifurcation in one-fast and two-slow variables has been explicitly derived by Braaksma \cite{BB}.  We will follow \cite{BB} to reduce system (\ref{normal1}) to its normal form (also see \cite{sadhutrans}).  The reduction allows us to explicitly calculate Hopf bifurcation analytically as stated in the next theorem.

\begin{theorem}
\label{normal} Under assumptions (H3)-(H7),  system (\ref{normal1}) can be written in the normal form:
\begin{eqnarray}\label{normal2}    
\left\{
\begin{array}{ll} \frac{du}{d\tau} &=v+\frac{u^2}{2}+\delta (\alpha(h) u +F_{13} uw +\frac{1}{6}F_{111}u^3)+O(\delta^2)\\
   \frac{dv}{d\tau} &=-u+O(\delta^2) \\
    \frac{dw}{d\tau} & = \delta (H_3 w +\frac{1}{2} H_{11} u^2) +O(\delta^2),
       \end{array} 
\right. 
\end{eqnarray}
where $\tau=\frac{s}{\delta}$ with $\delta = \frac{\sqrt{\zeta}}{{\omega}}$ and $\omega$, $F_{13}$, $F_{111}$, $H_3$, $H_{11}$, and $\alpha(p)$ are given in the Appendix.
The normal form is valid for $(x, y, z, h)=(\bar{x}+O(\sqrt{\zeta}), \bar{y}+O(\zeta) , \bar{z}+O(\zeta), \bar{h}+O(\zeta))$.  Furthermore, system (\ref{normal1}) undergoes a Hopf bifurcation at $h=\bar{h}+\zeta A+O(\zeta^{3/2})$ for sufficiently small $\zeta>0$, where $A$ is the solution of equation (\ref{appnd1}) given in the Appendix.
The Hopf bifurcation is super(sub)critical if the first Lyapunov coefficient
\bes
\label{lyap}
l_1(0)=\frac{\delta}{4}\Big(\frac{1}{2}F_{111}- \frac{F_{13}H_{11} }{H_3}\Big)<(>) 0.
\ees
\end{theorem}

We refer to the work of Braaksma (Theorems 1 and 2 in \cite{BB}) for the detailed proof.   
With extensive algebraic calculations, we can write $(u,v,w)$ in terms of the original coordinates $(x,y,z)$ as given by  (\ref{convert}) in the Appendix. 

 \subsection{Analysis of the normal form} 
We will employ system (\ref{normal2}) to study the dynamics of system (\ref{normal1}) near the equilibrium $E^*=(x_0(p), y_0(p), z_0(p))$  for $p$ in an $O(\zeta)$ neighborhood of $\bar{p}$. In the normal form variables, $E^*$ is mapped to the origin and will be denoted by $q_e$. We note that the eigenvalues of the variational matrix of (\ref{normal2}) at the equilibrium $q_e$ up to higher order terms are 
\bes
\label{eig1}
\rho_1 = \delta H_3,\ \rho_{2,3} = \frac{1}{2} \Big[\alpha \delta \pm i \sqrt{4- \alpha^2 \delta^2}\Big].
\ees
 If $H_3< 0$, then $q_e$ is a stable node or a stable spiral for $\alpha<0$, while it is a saddle-focus with two-dimensional unstable and one-dimensional stable manifold for $0<\alpha<2/\delta$. On the other hand,  if $H_3> 0$, then $q_e$ is an unstable node/spiral for $0<\alpha<2/\delta$, while it is a saddle-focus with two-dimensional stable and one-dimensional unstable manifold for $\alpha<0$.  
 
 Recalling that (\ref{normal1}) satisfies (Q1)-Q5) near $\bar{h}$, it follows that system (\ref{normal2}) also satisfies those hypotheses near $q_e$.  By (Q1),  $q_e$ switches from an unstable focus to a saddle-focus at $\alpha=0$ and the Hopf bifurcation is subcritical, hence we must have $H_3>0$ and  $l_1(0)>0$ in (\ref{lyap}). By (Q3), the flow generated by (\ref{normal2}) on the tangent plane of the stable manifold of $q_e$ points towards the lower-half space, hence we must also have $H_{11}<0$. Let $\Gamma^*_{\alpha}$ be the image of $\Gamma(p)$ in the normal form variables, where we consider $\alpha \in (\alpha_s, \alpha_e)$ for some  $\alpha_s, \alpha_e<0$ with $O(H_3)=\alpha_s, \alpha_e$ so that $\Re(\rho_{2,3})=O(\rho_1)$. Without loss of generality, we will assume that $\Gamma^*_{\alpha}$ lies on the upper-half space  $\{w>0\}$. We note from hypothesis (Q4) that for all $\alpha \in (\alpha_e, 0)$,  the unstable manifold of $q_e$ tends to the stable manifold of  $\Gamma^*_{\alpha}$ in the upper-half space $\{w>0\}$.  
 Finally, if we denote the image of $\tilde{\Gamma}(p)$ by $\tilde{\Gamma}^*_{\alpha}$, then it follows from (Q5) that $\tilde{\Gamma}^*_{\alpha}$ is not necessarily an attractor of system (\ref{normal2}) as it lies outside the domain of validity of the normal form variables. Hence we will define $\tilde{\Gamma}^*_{\alpha}$ to be the singleton set $\{(0, 0, -\infty)\}$ in the $(u, v, w)$ coordinates. An  illustration of a possible bistable behavior that system (\ref{normal2}) can exhibit is shown in figure \ref{normal_ratio_bdd_unbdd}.
 
  \begin{figure}[h!]     
  \centering 
     \subfloat[A trajectory approaching the limit cycle (blue).]
      {\includegraphics[width=6.0cm]{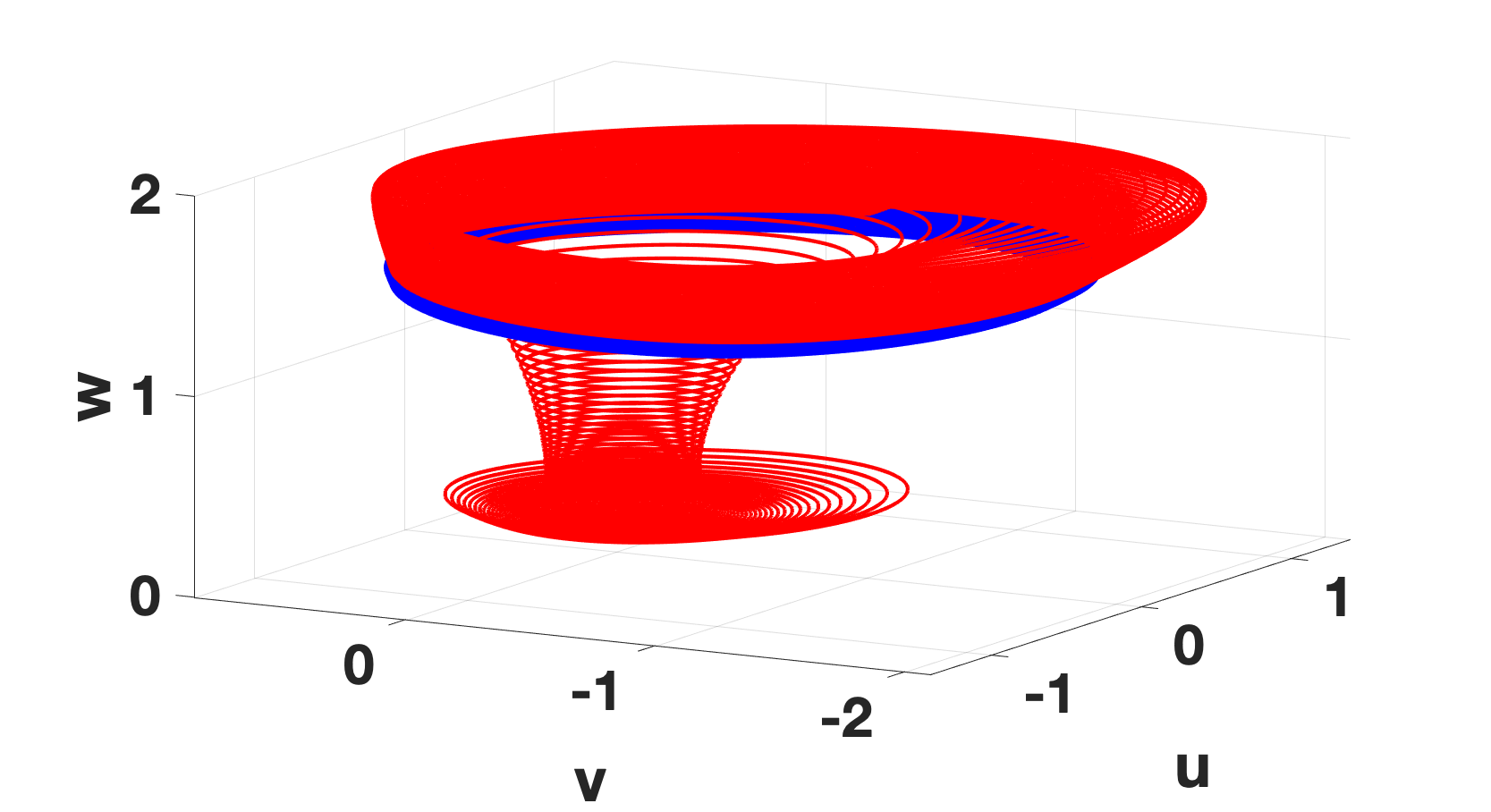}}\quad
      \subfloat[A trajectory approaching $(0, 0, -\infty)$]{\includegraphics[width=6.0cm]{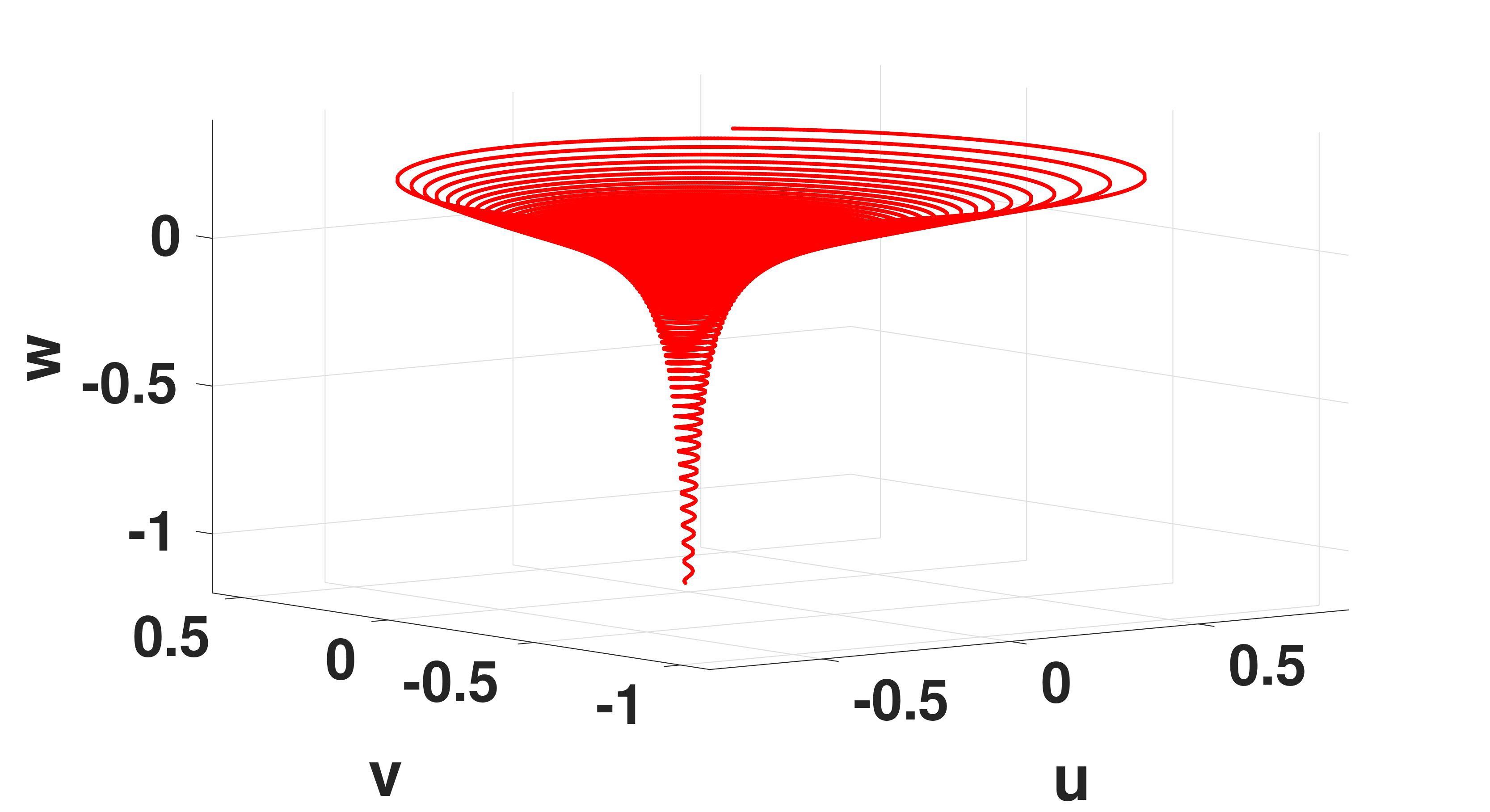}}
  \caption{An illustration of bistable behavior exhibited by system (\ref{normal2}) for suitable parameter values. Both solutions start above the plane $\{w=0\}$; one of them approaches $\Gamma$ (shown in blue) and other spirals down the $w$-axis  and approaches $\tilde{\Gamma}$ in the normal form coordinates. (A)  Initial conditions: $(0.452, 0.432, 0.329)$.  (B)  Initial conditions: $(0.452, 0.432, 0.249)$.}
  \label{normal_ratio_bdd_unbdd}
  \end{figure}

  \subsubsection{Linear Analysis near $q_e$}
  For $\alpha<0$ and $H_3>0$,  the flow generated by (\ref{normal2}) up to the first order is given by 
 \begin{eqnarray}\label{flow1}    
\left\{
\begin{array}{ll} u(\tau) &= Ae^{\frac{\alpha \delta \tau}{2}}  \sin(\vartheta \tau +\phi_1),\\
\label{flow} v(\tau) &= A e^{\frac{\alpha \delta \tau}{2}} \sin(\vartheta \tau +\phi_2),\\
w(\tau) &= e^{\delta H_3 \tau} \Big(w_0+\frac{\delta H_{11}}{2} \Big(C+\frac{4\vartheta^2 u_0^2+(2v_0 +\alpha \delta u_0 )^2}{8 \vartheta^2 \delta (\alpha-H_3)}\Big)\Big)\\
 &+ \frac{\delta}{2} H_{11}e^{\alpha \delta \tau} \Big[C \cos(2\vartheta \tau)+D \sin(2\vartheta \tau) 
+\frac{4\vartheta^2 u_0^2+(2v_0 +\alpha \delta u_0 )^2}{8 \vartheta^2 \delta (\alpha-H_3)} \Big],
   \end{array} 
\right. 
\end{eqnarray}
where $(u(0), v(0), w(0))=(u_0, v_0, w_0)$ and \bess \vartheta &=& \sqrt{1-\frac{\alpha^2 \delta^2}{4}}, \ A= \frac{1}{\vartheta}\sqrt{u_0^2 +\alpha\delta u_0 v_0+v_0^2}, \ \phi_1=\tan^{-1}\Big(\frac{2\vartheta u_0}{2v_0+\alpha\delta u_0} \Big), \\
 \phi_2 &= &\tan^{-1}\Big(-\frac{2\vartheta v_0}{2u_0+\alpha\delta v_0} \Big), \\
C &=& \frac{\delta(\alpha-H_3) (4\vartheta^2 u_0^2-(2v_0 +\alpha \delta u_0 )^2)- 8 \vartheta^3 u_0(2v_0+\alpha \delta u_0) } {8\vartheta^2 (4\vartheta^2+\delta^2(\alpha-H_3)^2)}, \\
D &=& \frac{ 4\vartheta^2 u_0^2-(2v_0 +\alpha \delta u_0 )^2 +2\delta(\alpha-H_3)  u_0(2v_0+\alpha \delta u_0) } {4 \vartheta (4\vartheta^2+\delta^2(\alpha-H_3)^2)}.
\eess

 \subsubsection{Parametrization of the slow variable in system (\ref{normal2})}
For a fixed $\alpha<0$ such that $|\alpha|=O(H_3)$, one may replace the slow variable $w$ in system (\ref{normal2}) by a  parameter $\lambda$ to obtain insight into the dynamics of the fast variables. 
The fast variables $(u, v)$ are then governed by the parametrized  system 
\begin{eqnarray}\label{normal_par}    
\left\{
\begin{array}{ll} \frac{du}{d\tau} &= \delta (\alpha + F_{13} \lambda) u+v+\frac{u^2}{2}+ \frac{\delta}{6}F_{111}u^3 \\
   \frac{dv}{d\tau} &=-u
       \end{array} 
\right. 
\end{eqnarray}
up to $O(\delta^2)$. Linearization of (\ref{normal_par}) yields that the eigenvalues at the origin are 
\bes  \label{eigenval} \sigma_{1,2}(\lambda) = \frac{1}{2} \Big(\delta (\alpha +F_{13} \lambda) \pm \sqrt{\delta^2 (\alpha +F_{13} \lambda)^2-4} \Big) +O(\delta^2).
\ees
For $\lambda>0$, we note from (\ref{eigenval}) that the origin $(0, 0)$ always remains asymptotically stable if $F_{13}\leq 0$, which then would imply that system (\ref{normal2}) cannot approach $\Gamma_{\alpha}^*$, the only attractor of the system that lies in the upper half space. Hence we must have $F_{13}>0$.  
  A Hopf bifurcation of (\ref{normal_par})   occurs at $\lambda=\lambda_H(\alpha)= -\alpha/F_{13}$ with the first Lyapunov coefficient $\delta F_{111}/8$ \cite{sadhutrans}. 
   The origin $(0,0)$ is asymptotically stable for $0<\lambda<\lambda_H(\alpha)$ and unstable for $\lambda>\lambda_H(\alpha)$. To ensure that the oscillations of (\ref{normal_par})  are bounded, we also must have that $F_{111}<0$ with $|F_{111}|/6 =O(F_{13})$.

Henceforth throughout the paper, we will assume that $F_{111}<0$, $F_{13}>0$, $H_3>0$ and $H_{11}<0$ such that $l_1(0)>0$ in (\ref{lyap}).
  
   \subsection{Analysis of the bistable behavior in system (\ref{normal2})} 
 Fix $\alpha<0$ with $|\alpha|=O(H_3)$ and consider the dynamics of system (\ref{normal2}) near $q_e$.  For a trajectory with $u^2(0), v^2(0)=O(\delta)$ and $0<w(0)=O(\delta)$ that lies in a very close neighborhood of the local stable manifold of $q_e$, $W^s_{\mathrm{loc}}(q_e)$, it may either spiral away from $q_e$ and approach the limit cycle  $\Gamma_{\alpha}^*$, or it may cross the plane $\{w=0\}$ and eventually approach $(0,0, -\infty)$ as $\tau\to \infty$. Note that  if  $w(\tau)\geq -\frac{H_{11}}{2H_3} (u^2+v^2)(\tau)$ as the envelope of $(u^2+v^2)(\tau)$  decreases,  then we have from  (\ref{normal2}) that
$\frac{dw}{d\tau} >  -\frac{1}{2}\delta H_{11} v^2 \geq 0$, 
implying that the trajectory must spiral up and move away from $q_e$. On the other hand, if $w(\tau) < -\frac{H_{11}}{2H_3} (u^2+v^2)$  as long as the trajectory spirals inwards, then the fact that $u^2(\tau)+v^2(\tau)=O(e^{\alpha \delta \tau})$  (follows from (\ref{flow1}))  will imply that  $w(\tau)< \kappa e^{\alpha \delta\tau}$ for some $\kappa>0$.  Since $q_e$ is unstable, $w(\tau)$ cannot approach zero as $\tau \to \infty$. Hence $w(\tau)$ must cross the plane $\{w=0\}$ at some finite time $\tau=\tilde{\tau}$. The invariance of the lower-half space $\{w\leq 0\}$ would then imply that the trajectory approaches $(0, 0, -\infty)$ as $\tau\to \infty$ in this case.

Next, we will analyze the boundary of the basins of attraction of $\Gamma_{\alpha}^{\star}$ and $\tilde{\Gamma}_{\alpha}^{\star}$. We consider the set $\Omega_{\alpha}= \{(u,v,w): w < -\frac{H_{11}}{2H_3} (u^2+v^2)\}$ and show that its complement restricted to a small neighborhood of $q_e$ separates solutions approaching $\Gamma_{\alpha}^*$ from $\tilde{\Gamma}^{\star}$. We first note from (\ref{flow1}) that  solutions in a vicinity of $q_e$ feature rapid oscillations with slow variation in their amplitudes along $W^s_{\mathrm{loc}}(q_e)$, while slowly evolving along$W^u_{\mathrm{loc}}(q_e)$ for $\alpha<0$, $|\alpha|=O(H_3)$. Hence, to understand the intrinsic behavior of the trajectories near $q_e$  as they make a slow passage through it,  we will exploit the timescale separation in the dynamics by  applying the method of averaging \cite{GH, V} and  study the slow evolution of the system near $\Omega_{\alpha}$ restricted to a  neighborhood of $q_e$. To this end, we consider a solution that starts in $\Omega_{\alpha}$ near $W_{\mathrm{loc}}^s(q_e)$. To determine whether such a solution will enter into the funnel $\mathbb{R}^3 \setminus \Omega_{\alpha}$ and eventually approach $\Gamma_{\alpha}^*$, or remain in $\Omega_{\alpha}$ for all $\tau>0$ and approach $(0, 0, -\infty)$, we will consider the  moving averages of $u$, $v$ and $w$ over intervals of length $l$ (to be defined later), denoted by $\bar{u}$, $\bar{v}$ and  $\bar{w}$ respectively, where the moving average of a function $g$ is defined by
\bes \label{movingavg}
\bar{g} (\tau) = \frac{1}{l} \int_{\tau}^{\tau+l}g(s) \ ds \ \textrm{for} \ \tau\geq 0.
\ees

 \subsubsection{Derivation and analysis of the averaged system}  Fix $\alpha<0$ with $|\alpha|=O(H_3)$. Under the assumption
 \[
 \textnormal{(P1)} \   F_{13}>0,\  F_{111}<0, \ H_3>0,\  H_{11}<0 \textnormal{\ such\  that\ } l_1(0)>0 \textnormal{\ in } (\ref{lyap}),\]
 we consider a solution $(u(\tau), v(\tau), w(\tau))$ of system  (\ref{normal2}) with initial values $(u_0, v_0, w_0) \in \Omega_{\alpha}$ such that  $u_0, v_0=O(\sqrt{\delta})$, $w_0=O(\delta)$, where $0<w_0 <-\alpha/F_{13}$ such that the following holds:\\
 
(P2) {\emph{$(u(\tau), v(\tau), w(\tau)) \in \Omega_{\alpha}$ for all $\tau\in [0, \tau_N]$ and $w(\tau)$ has a decreasing envelope on $[0, \tau_N]$ with $w({\tau}_{N})>0$, where $\{\tau_i\}_{i=1}^N$ 
is an increasing sequence of locations of relative maxima of $u(\tau)$ with $\{u(\tau_i)\}_{i=1}^N$ decreasing}}.\\

 We will now derive and analyze the equation of $\bar{w}$ to find a set of sufficient conditions that distinguish the long-term behavior of the solutions. Integrating the third equation of (\ref{normal2}) over an interval $[\tau, \tau+l]$ and using the fact that $\bar{w'}=(\bar{w})'$, we note that  $\bar{w}(\tau)$ satisfies
\bes
\label{wbar_eqn}
\frac{d\bar{w}}{d\tau} = \delta H_3 \bar{w} + \frac{\delta}{2} H_{11}\bar{u^2}.
\ees
 Solving (\ref{wbar_eqn}) yields that
\bes
\label{env_w}
\bar{w}(\tau) = \bar{w}(\tau_0) e^{\delta H_3 (\tau-\tau_0)}+\frac{\delta H_{11}}{2} \int_{\tau_0}^{\tau}e^{\delta H_3 (\tau-s)}\bar{u^2}(s) \ ds
\ees
for $\tau>\tau_0$, where $\tau_0\geq 0$  is an arbitrary chosen time.  Let $I=[\tau_1, \tau_N]$ and $p$ be the average period of oscillations of $u(\tau)$ in the interval $I$. Note that as long as the trajectory spirals inwards towards  $q_e$, the dynamics of $u$ and $v$ can be approximated by  (\ref{normal_par}), and thus  $\bar{u^2}$, $\bar{v^2}$ will have the form $(a+c\sin(2\vartheta \tau) +d\cos(2\vartheta \tau))e^{b(\tau)\tau}+o(1)$ as $\delta \to 0$ for some $a>0$,  $c, d \approx 0$ and $b(\tau)<0$, a slowly varying function such that $b(\tau) \to \alpha \delta$ as $w(\tau) \to 0$. Here, $\vartheta  \approx 2\pi/p$ and $a, b(\tau), c$, and $d$ depend on the initial conditions.  For simplicity, we will assume that $b(\tau)= b_2$, where $b_2<0$ is such that  $|b_2- \alpha \delta|=O(\delta^m)$, $m\geq 2$. 
Decomposing $\bar{u^2}(\tau)$ into the sum of its average and fluctuations from the mean, with the aid of (\ref{flow1}) we can write $\bar{u^2}(\tau) = \bar{u^2}_{\textrm{est}}(\tau) +O(\delta^2)$, where
\bes \label{uest} \bar{u^2}_{\textrm{est}}(\tau) =\bar{u^2}_{\textrm{base}}(\tau)+\bar{u^2}_{\textrm{osc}}(\tau)\ \textnormal{with}\ees 
\bes \label{ubase}
\bar{u^2}_{\textrm{base}}(\tau) & = & b_1 e^{b_2(\tau-\tau_1)},\\
\nonumber  \bar{u^2}_{\textrm{osc}}(\tau) &=&  e^{b_2(\tau - \tau_1)}
 \Big(\gamma_1\sin(2\vartheta (\tau -\tau_1)) +\gamma_2 \cos (2\vartheta (\tau- \tau_1))\Big), \\
 \nonumber  b_1 & = & \frac{A^2(1-e^{-pb_2})}{2pb_2}, \ b_2= \alpha \delta+O(\delta^m), \ \textrm{and} \  \gamma_1, \gamma_2=O(\delta^m), \ m\geq 2.\ees
Recalling that $u_0, v_0=O(\sqrt{\delta})$, hence we must have that $b_1 =O(\delta)$. Next we choose $\tau_0=\tau_1$ and $l=p$ in (\ref{env_w}). Ignoring the contribution from $ \bar{u^2}_{\textrm{osc}}(\tau)$ in $\bar{u^2}(\tau)$ and using the form of $\bar{u^2}_{\textrm{base}}(\tau)$ in (\ref{ubase}), we then have from (\ref{env_w}) that $\bar{w} = \bar{w}_{\textrm{base}} +O(\delta^2)$, where
 \bes
\label{west1} \bar{w}_{\textrm{base}}(\tau)= \Big(\bar{w}(\tau_1) - \frac{\delta H_{11}b_1}{2(b_2-\delta H_3)} \Big)e^{\delta H_3 (\tau-\tau_1)}+\frac{\delta H_{11}b_1}{2(b_2-\delta H_3)}e^{b_2(\tau-\tau_1)}.
\ees  
We note that as long as $w(\tau)=O(\delta)$ and $u^2(\tau), v^2(\tau) =O(\delta)$, we have from  (\ref{normal2}) that $w'(\tau)=O(\delta^2)$. Hence, it follows from the Mean Value Theorem that $w(\tau)- \bar{w}(\tau) = \frac{1}{p}\int_{\tau}^{\tau+p} (w(\tau)-w(s))\ ds \leq \frac{p}{2}{\sup}_I|w'| =O(\delta^2)$ and thus $w(\tau) -  \bar{w}_{\textrm{base}}(\tau) =O(\delta^2)$.

\begin{remark}\label{rmk1} The approximation of the decay function $b(\tau)$ by a constant $b_2$  is also valid for $\tau>\tau_N$ as long as the solution spirals inwards and stays in the vicinity of the $\{w=0\}$ plane. Therefore, the expressions in (\ref{uest}) and (\ref{ubase}) are also valid beyond $\tau_N$.
\end{remark}

We will now state our main result.


\begin{theorem} 
\label{bistability} Assume (P1)-(P2). Then for $\delta>0$ sufficiently small, system (\ref{normal2}) approaches $\Gamma_{\alpha}^*$ as $\tau \to \infty$ if 
\bes
\label{condnew}  \frac{\delta H_{11}b_1}{2(b_2-\delta H_3)} + \delta^2< \bar{w}(\tau_1) < -\frac{H_{11}b_1}{2H_3},
\ees
where
    \bes  \label{rel0}
 b_1 &=& \frac{A^2\vartheta}{4\pi b_2 }(1-e^{\frac{-2\pi b_2}{\vartheta}}),\  b_2 = \alpha \delta+O(\delta^m), \ m\geq 2 \\ \nonumber   \textnormal{with} \\
 \label{relA}  A &=& \frac{1}{\vartheta}\sqrt{u_0^2 +u_0 v_0 b_2+v_0^2}, \  \vartheta = \sqrt{1- \frac{\alpha^2 \delta^2}{4}}+O(\delta^2).
\ees
On the other hand, if  $\bar{w}(\tau_1)< \frac{\delta H_{11} b_1 e^{b_2 \tau_1}}{2(b_2-\delta H_3)}$, then
 $(u(\tau), v(\tau), w(\tau)) \to (0,0, -\infty)$ as $\tau \to \infty$. 
\end{theorem}

To prove the theorem, we will need a few lemmas, the proofs of which are presented in the Appendix.


\begin{lemma}
\label{funnel} Assume (P1)-(P2). Suppose that $(u(\tau), v(\tau), w(\tau))$ is a solution of system  (\ref{normal2}) that meets condition (\ref{condnew}). 
Then  $w(\tau)$  attains its global minimum at some ${\tau}_{\mathrm{min}} \geq \tau_N$. 
\end{lemma}

\begin{lemma}
\label{funnel1} Assume (P1)-(P2). Suppose that $(u(\tau), v(\tau), w(\tau))$ is a solution of system  (\ref{normal2}) that meets condition (\ref{condnew}). 
Then  there exist $\tau_a , \tau_b > {\tau}_{\mathrm{min}}$ such that  
$(u(\tau), v(\tau), w(\tau)) \in \mathbb{R}^3 \setminus \Omega_{\alpha}$  for all $\tau \in [\tau_a, \tau_b]$.
\end{lemma}

\begin{lemma} \label{invariance} 
Assume (P1)-(P2). Then the  set $\Omega_{\alpha}$ is positively invariant with respect to the solution  $(u(\tau),v(\tau),w(\tau))$  of system  (\ref{normal2}) 
if $\bar{w}(\tau_1)< \frac{\delta H_{11}b_1}{2(b_2-\delta H_3)}$, where  $b_1$ and $b_2$ are defined as in (\ref{rel0}).  Furthermore, $ (u(\tau),v(\tau),w(\tau)) \to (0, 0, -\infty)$ as $\tau \to \infty$.
\end{lemma}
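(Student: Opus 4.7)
The plan is to leverage the moving-average machinery already built up for Theorem \ref{bistability} and Lemma \ref{funnel}. Under the standing sign conventions ($H_{11}<0$, $H_3>0$, $b_2\approx\alpha\delta<0$), the constant $C_2:=\frac{\delta H_{11}b_1}{2(b_2-\delta H_3)}$ is positive; the hypothesis $\bar{w}(\tau_1)<C_2$ therefore makes the coefficient $\bar{w}(\tau_1)-C_2$ of the growing exponential $e^{\delta H_3(\tau-\tau_1)}$ in (\ref{west1}) strictly negative, while the coefficient $C_2$ of the decaying exponential $e^{b_2(\tau-\tau_1)}$ stays positive. Differentiating (\ref{west1}), both contributions to $\bar{w}'_{\textrm{base}}(\tau)$ are negative for $\tau\geq\tau_1$, so $\bar{w}_{\textrm{base}}$ is strictly decreasing on $[\tau_1,\infty)$ with $\bar{w}_{\textrm{base}}(\tau)\to-\infty$. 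Since $w(\tau)-\bar{w}(\tau)=O(\delta^2)$ and $\bar{w}(\tau)-\bar{w}_{\textrm{base}}(\tau)=O(\delta^2)$ while the trajectory stays near $q_e$, the same monotone decrease and divergence to $-\infty$ transfer to $w(\tau)$.

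To establish positive invariance of $\Omega_{\alpha}$ I would compare $w$ to the paraboloid $-\frac{H_{11}}{2H_3}(u^2+v^2)$ through averages. Using (\ref{flow1}), the base part of $\overline{u^2+v^2}$ admits the form $b_1'\,e^{b_2(\tau-\tau_1)}+O(\delta^4)$ for some $b_1'>0$ of the same order as $b_1$ (in fact $b_1'\approx 2b_1\approx A^2$ to leading order). Define the averaged surrogate $\bar{L}(\tau):=\bar{w}(\tau)+\frac{H_{11}}{2H_3}\,\overline{u^2+v^2}(\tau)$. Substituting (\ref{west1}) and the base expression for $\overline{u^2+v^2}$ yields $\bar{L}(\tau)=(\bar{w}(\tau_1)-C_2)\,e^{\delta H_3(\tau-\tau_1)}+\gamma\,e^{b_2(\tau-\tau_1)}+O(\delta^2)$ with $\gamma:=C_2+\frac{H_{11}b_1'}{2H_3}$. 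Using $b_2-\delta H_3\approx -\delta H_3$ (so $C_2\approx -\frac{H_{11}b_1}{2H_3}$) and $b_1'\approx 2b_1$, a leading-order calculation gives $\gamma\approx \frac{H_{11}b_1}{2H_3}<0$, so both exponential coefficients in $\bar{L}(\tau)$ are negative. Hence $\bar{L}(\tau)<0$ for every $\tau\geq\tau_1$, and since $w(\tau)-\bar{w}(\tau)=O(\delta^2)$ and $(u^2+v^2)(\tau)-\overline{u^2+v^2}(\tau)=O(\delta^2)$ near $q_e$, this transfers to the pointwise inequality $w(\tau)+\frac{H_{11}}{2H_3}(u^2+v^2)(\tau)<0$, i.e., $(u(\tau),v(\tau),w(\tau))\in\Omega_{\alpha}$. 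A maximal interval argument seals the claim: set $T=\sup\{\tau\geq 0:(u(s),v(s),w(s))\in\Omega_{\alpha}\ \textrm{for all}\ s\in[0,\tau]\}$; by the hypotheses of Theorem \ref{bistability} we have $T\geq\tau_N$, and an exit at $T<\infty$ would contradict $\bar{L}(T)<0$ once the $O(\delta^2)$ averaging errors are absorbed into the $O(\delta)$ margin carried by $\bar{w}(\tau_1)-C_2$.

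The asymptotic statement then follows quickly. Because $w(\tau)\to-\infty$, the eigenvalues $\sigma_{1,2}(w)$ in (\ref{eigenval}) of the parametrized planar system (\ref{normal_par}) become real with leading eigenvalue $\sim \delta(\alpha+F_{13}w)\to-\infty$, rendering the origin $(0,0)$ of (\ref{normal_par}) strongly attracting, so $u(\tau),v(\tau)\to 0$. Feeding this back into $\dot{w}=\delta H_3 w+\frac{\delta H_{11}}{2}u^2+O(\delta^2)$ yields $\dot{w}\sim\delta H_3 w$ for large $\tau$, confirming $w(\tau)\to-\infty$ exponentially and giving $(u(\tau),v(\tau),w(\tau))\to(0,0,-\infty)$.

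The principal obstacle is the positive invariance step: both $w$ and the paraboloid are of order $\delta$ during the transient phase, while the averaging error is $O(\delta^2)$, so the argument is viable only because the hypothesis provides an $O(\delta)$ margin in the coefficient $\bar{w}(\tau_1)-C_2$ that dominates the $O(\delta^2)$ corrections. The bulk of the technical work lies in the explicit computation of $b_1'$ from (\ref{flow1}) and the verification that the cancellations collapse the slow coefficient $\gamma$ to a negative quantity of the correct order in $\delta$; without this sign combination the averaged surrogate would not suffice to control $w$ pointwise.
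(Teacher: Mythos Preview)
Your overall strategy---use the moving-average formula (\ref{west1}), show $\bar{w}_{\textrm{base}}$ is strictly decreasing to $-\infty$ under the hypothesis, and control the paraboloid via averages---runs parallel to the paper's, but there are two genuine gaps.

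First, the transfer from $\bar{L}<0$ to the pointwise inequality $w+\frac{H_{11}}{2H_3}(u^2+v^2)<0$ relies on your claim that $(u^2+v^2)(\tau)-\overline{u^2+v^2}(\tau)=O(\delta^2)$. This is incorrect: from (\ref{normal2}) one has $(u^2+v^2)'=u^3+O(\delta^{5/2})$, so over a period the oscillation of $u^2+v^2$ about its moving average is $O(\delta^{3/2})$, not $O(\delta^2)$. (In the linear flow (\ref{flow1}) the phases satisfy $\phi_1-\phi_2\approx\pi/2$, making $u^2+v^2$ nearly constant, but the quadratic term $u^2/2$ in $u'$ spoils this at order $\delta^{3/2}$.) Since $|\bar{L}|$ is of order $\delta$, this weaker estimate still suffices for the transfer once $\delta$ is small, but your assertion and the final paragraph's ``$O(\delta^2)$ averaging errors'' need correction; note also that the margin you invoke is supplied by $\gamma$, not by $\bar{w}(\tau_1)-C_2$, which may be arbitrarily small under the hypothesis. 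The paper sidesteps all of this by comparing $w(\tau)$ not to the \emph{average} of $-\frac{H_{11}}{2H_3}(u^2+v^2)$ but to its local \emph{minima} $m_k=-\frac{H_{11}}{2H_3}v^2(t_k)$ on each half-period $[t_k,t_{k+1})$, a pointwise envelope argument that never needs the size of the oscillation.

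Second, and more seriously, your argument that ``divergence to $-\infty$ transfers to $w(\tau)$'' via $|w-\bar{w}_{\textrm{base}}|=O(\delta^2)$ cannot work as stated: the approximations (\ref{ubase})--(\ref{west1}) are valid only while the trajectory remains in an $O(\delta)$ neighborhood of $q_e$ (this is Remark \ref{rmk1}, and is what justifies the constant decay rate $b_2$). If $w$ stays near $q_e$ forever it cannot diverge; if it leaves, the approximation breaks down before you reach $-\infty$. The correct logical order, which the paper follows, is: use the averaging approximation only to locate a finite time $\tilde{\tau}$ at which $w$ crosses zero, then discard the approximation and invoke the differential inequality $w'\le\delta H_3 w$ (which follows directly from (\ref{normal2}) and $H_{11}<0$) together with Gronwall to drive $w\to-\infty$. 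You have this inequality at the very end of your sketch, but it appears as a confirmation rather than as the mechanism, and the asymptotic $(u,v)\to(0,0)$ that precedes it already presupposes $w\to-\infty$, making the logic circular.
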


%
%
{\emph{Proof of Theorem  \ref{bistability}:}} Lemmas \ref{funnel} and  \ref{funnel1}  imply that a solution of (\ref{normal1}) initiated in $\Omega_{\alpha}$ enters into the funnel, $\mathbb{R}^3\setminus \Omega_{\alpha}$, and remains inside it for all $\tau \in [\tau_a, \tau_b]$ if (\ref{condnew}) holds.  Furthermore, it follows from (\ref{normal_par}) that as long as the trajectory spirals inward, it remains in $\mathbb{R}^3\setminus \Omega_{\alpha}$; in particular this occurs for all $\tau>\tau_a$ such that  $w(\tau) <-\alpha/F_{13}$. We also note from Remark \ref{hypth} that  the basin of attraction of  $\Gamma_{\alpha}^*$  
must contain the set $\mathcal{B}:=\{(u, v, w) \in [-\delta, \delta]\times [-\delta, \delta]\times(0, -\alpha/F_{13}]:  w\geq \Theta(u, v, \delta)\} \cap \{ w> -H_{11}/(2H_3)u^2 \}$ for sufficiently small $\delta>0$, where $w=\Theta(u, v, \delta) +O(\delta^2)$ is the local approximation of $W^s_{\textnormal{loc}}(q_e)$.
Since $(\mathbb{R}^3\setminus \Omega_{\alpha})\cap \{w<-\alpha/F_{13}\} \subset \mathcal{B}$ for all $\delta>0$ sufficiently small,  hence  the flow restricted to $\{(u, v, w)\in \mathbb{R}^3\setminus \Omega_{\alpha}: 0<w\leq -\alpha/F_{13}\}$  must approach the attractor  $\Gamma_{\alpha}^*$ as $\tau \to \infty$, which proves the first part of Theorem \ref{bistability}.  The second part of Theorem \ref{bistability}  follows directly from Lemma \ref{invariance}.

 {\hfill \ensuremath{\Box}}

\section{Early warning signals}
\label{sec:early_warning}
In the previous section, we obtained a set of sufficient conditions on $\bar{w}(\tau_1)$  to predict the asymptotic behavior of a trajectory as it approaches  the equilibrium $q_e$. 
In this section, we focus on finding a method of predicting long-term behaviors of trajectories as early as possible during their journeys towards  $q_e$ in the same parameter regime considered in Theorem \ref{bistability}.  The significance of the method lies in finding the shortest time interval over which we can predict a forthcoming sign change in $w(\tau)$ when a trajectory fails to enter into the funnel $\mathbb{R}^3\setminus \Omega_{\alpha}$ as it approaches $q_e$.

Due to the difference in timescales between the frequency and the amplitude of  oscillations of $u(\tau)$, we can choose the initial conditions  in such a way that there exist at least $N\gg 1$ oscillations before the amplitude of $u(\tau)$ decays by a factor of $e$.   Indeed, we note from (\ref{flow1}) that as a solution of (\ref{normal1}) approaches $q_e$, the amplitude of  $u(\tau)$ decays exponentially with the decay function $b(\tau)/2 \approx \alpha \delta /2$, while the frequency of its oscillations $\vartheta  \approx 1 -\frac{1}{8}\alpha^2\delta^2 \approx 1$ for sufficiently small $\delta>0$,  hence one can choose $(u_0, v_0, w_0)$ such that $\tau_N> \frac{\vartheta}{2\pi |b(\tau_N)|}$, where $\tau_i$, $i=1, 2, \ldots, N, \ldots $ are the locations of relative maxima of $u(\tau)$. 
We now define a sequence of intervals $I_i=[\tau_1, \tau_{k+i}]$, $i=1, 2 \ldots N-k$ such that each interval contains at least  $k$ oscillations of $u$, where  $k \ll N$. The goal is to find the smallest interval $I_i$ (and therefore obtain the shortest time) on which a forthcoming population collapse can be accurately predicted.

To this end, let $l_i$ be the average period of oscillations of $u(\tau)$ in the interval $I_i$, i.e.  $l_i = \frac{1}{k+i}\sum_{j=1}^{k+i-1} (\tau_{j+1}-\tau_j)$.   
On each interval $I_i$, 
we will approximate $\bar{u^2}$ by $\bar{u^2_i}_{\textrm{est}}$, where $\bar{u^2_i}_{\textrm{est}}(\tau)=\bar{u^2_i}_{\textrm{base}}(\tau)+\bar{u^2_i}_{\textrm{osc}}(\tau)$ with
\bess
\bar{u^2_i}_{\textrm{base}}(\tau)  = k^i_1 e^{k^i_2 (\tau- \tau_1)}, \ \bar{u^2_i}_{\textrm{osc}}(\tau)=  e^{k^i_2 (\tau- \tau_1)}
 \Big(\gamma^i_1\sin \Big(\frac{\pi (\tau -\tau_1)}{l_i} \Big)+\gamma^i_2 \cos \Big(\frac{\pi (\tau-\tau_1)}{l_i}\Big)\Big),
\eess
\bes \label{seq1}
 k^i_1 = \frac{1}{\vartheta}\sqrt{u^2(\tau_1) + u(\tau_1)v(\tau_1) k^i_2+ v^2(\tau_1)}, \   
 k^i_2= O(\alpha \delta)\ \textnormal{and} \ \gamma^i_1, \gamma^i_2=O(\delta^2). 
 \ees
 Here $0<k_1^i =O(\delta)$ and $k_2^i<0$.  We will assume that $\{k_1^i\}_{i=1}^{N-k}$ is monotonically decreasing  whereas $\{k_2^i\}_{i=1}^{N-k}$ is monotonically  increasing. Furthermore, we will assume that $\{l_i\}_{i=1}^{N-k}$ is nearly constant with $|l_{i+1}-l_i|=O(\delta^m)$, $m\geq 2$. As in Section 3, we will ignore the contribution from $ \bar{u^2_i}_{\textrm{osc}}(\tau)$ and only consider the effect of 
$\bar{u^2_i}_{\textrm{base}}(\tau)$, which represents the base value about which $\bar{u^2}(\tau)$ oscillates in $I_i$. 
Letting  $l=l_i$ in (\ref{movingavg}), we consider the moving average of $w$ restricted to $I_i$.  Using the above form of $\bar{u^2_i}_{\textrm{base}}(\tau)$, we then have from (\ref{env_w}) that  
\bes
\label{wavg}
\bar{w}^i_{\textrm{base}}(\tau)= \Big(\bar{w}(\tau_1) - \frac{\delta H_{11}k^i_1}{2(k^i_2-\delta H_3)}\Big) e^{\delta H_3 (\tau-\tau_1)}+\frac{\delta H_{11}k^i_1}{2(k^i_2-\delta H_3)}e^{k^i_2(\tau-\tau_1)}, 
\ees
where $\bar{w}^i_{\textrm{base}}(\tau)$ is an approximation of $\bar{w}(\tau)$ with $|\bar{w}(\tau)-\bar{w}^i_{\textrm{base}}(\tau)|=o(1)$ on the interval $I_i$.
For each $i$ between $1$ and $N-k$, we define a critical curve $\bar{w}^i_{\textrm{crit}}(\tau)$  by  \bes
\label{crit_curve} \bar{w}^i_{\textrm{crit}}(\tau) = \frac{\delta H_{11}k^i_1 e^{k^i_2(\tau-\tau_1)}}{2(k^i_2-\delta H_3)},\ \tau \in I_i.
\ees
 We will examine the behavior of $\bar{w}^i_{\textrm{base}}(\tau)$  relative to its position with respect to  $\bar{w}^i_{\textrm{crit}}(\tau)$ as stated in the next two propositions. 

\begin{proposition} 
\label{envelope_min} 
Assume (P1).  Let $\{I_i\}_{i=1}^{N-k}$ be a  sequence of nested intervals  defined by $I_i=[\tau_1, \tau_{k+i}]$,  where $\{ \tau_i\}_{i=1}^{N}$ is an increasing sequence of locations of relative maxima of $u(\tau)$ such that $\{u(\tau)\}_{i=1}^N$ is decreasing and
 $k \ll N$ is a fixed integer greater than $4$ with $N=O(\frac{k}{\delta})$.  Suppose that $\{k_1^i\}_{i=1}^{N-k} $ and $\{k_2^i\}_{i=1}^{N-k}$, defined by (\ref{seq1}), are respectively decreasing and increasing sequences of real numbers  such that $\bar{u^2}(\tau)=k_1^ie^{k_2^i (\tau - \tau_1)}+o(1)$ on $I_i$. Then 
 $w(\tau)$ attains its global minimum in $I_{N-k}$ if  \bes
\nonumber  \bar{w}(\tau) & > & \bar{w}^1_{\textrm{crit}}(\tau) \ \textrm{on} \  I_1 \ \textrm{and}  \\
\label{cond}   \bar{w}(\tau) & > & \bar{w}^i_{\textrm{crit}}(\tau)\ \textrm{on} \  I_i \setminus I_{i-1}\ \textrm{for all} \ 2\leq i \leq N-k,
\ees
where 
$\bar{w}^i_{\textrm{crit}}(\tau)$ is defined by (\ref{crit_curve}).

\end{proposition}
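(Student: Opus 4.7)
My strategy is to work with the approximating curves $\bar{w}^i_{\mathrm{base}}$ on each interval $I_i$ and show that hypothesis (\ref{cond}) forces each of them to have a strict convex minimum, the last of which lies inside $I_{N-k}$. The starting point is to rewrite (\ref{wavg}) in the compact form
\[
\bar{w}^i_{\mathrm{base}}(\tau) = A_i\, e^{\delta H_3(\tau-\tau_1)} + \bar{w}^i_{\mathrm{crit}}(\tau), \qquad A_i := \bar{w}(\tau_1) - \bar{w}^i_{\mathrm{crit}}(\tau_1),
\]
with $\bar{w}^i_{\mathrm{crit}}$ defined by (\ref{crit_curve}). Written this way, $\bar{w}^i_{\mathrm{crit}}$ is exactly the curve that $\bar{w}^i_{\mathrm{base}}$ would trace if $A_i$ vanished, so the sign of $A_i$ can be read off directly from the relative position of $\bar{w}^i_{\mathrm{base}}$ and $\bar{w}^i_{\mathrm{crit}}$.

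Using $|\bar{w} - \bar{w}^i_{\mathrm{base}}| = o(1)$ on $I_i$, the assumption $\bar{w}(\tau) > \bar{w}^i_{\mathrm{crit}}(\tau)$ on $I_i \setminus I_{i-1}$ becomes $A_i e^{\delta H_3(\tau-\tau_1)} > o(1)$ there, which forces $A_i > 0$ for every $i \in \{1,\ldots,N-k\}$. With $A_i > 0$, $\bar{w}^i_{\mathrm{base}}$ is the sum of a strictly increasing exponential of rate $\delta H_3 > 0$ and a strictly decreasing positive exponential of rate $k_2^i < 0$, hence strictly convex, with a unique global minimum at
\[
\tau_{\min}^i = \tau_1 + \frac{1}{\delta H_3 - k_2^i} \ln\!\left(\frac{-k_2^i\, \bar{w}^i_{\mathrm{crit}}(\tau_1)}{\delta H_3\, A_i}\right).
\]
In particular $\bar{w}^{N-k}_{\mathrm{base}}$, and therefore $w$ up to the $O(\delta^2)$ gap between $w$ and $\bar{w}$ established in Section 3, is bounded below on $[\tau_1,\infty)$, which already rules out the crashing scenario of Lemma \ref{invariance}.

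The main obstacle is the final step of locating $\tau_{\min}^{N-k}$ strictly inside $I_{N-k}$. Here I would rely on the scaling assumptions: the denominator $\delta H_3 - k_2^{N-k}$ is of order $\delta$, while the length $\tau_N - \tau_1$ of $I_{N-k}$ is of order $N$ (the average period being close to $2\pi/\vartheta \approx 2\pi$) and hence, by $N = O(k/\delta)$, dominates the intrinsic relaxation time $1/(\delta H_3)$ by a large factor. Provided $A_{N-k}$ is bounded below by a positive power of $\delta$ — which the strict inequality in (\ref{cond}) secures with an $O(1)$ margin — the logarithmic factor contributes only an $O(1)$ shift, so $\tau_{\min}^{N-k}$ sits strictly between $\tau_1$ and $\tau_N$. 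Past that point $\bar{w}^{N-k}_{\mathrm{base}}$ strictly increases, and after absorbing the $o(1)$ approximation error together with $|w-\bar{w}|=O(\delta^2)$, one concludes that $w$ attains its global minimum inside $I_{N-k}$.
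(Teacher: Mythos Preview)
Your proposal is correct and follows essentially the same approach as the paper's proof: both transfer condition (\ref{cond}) from $\bar{w}$ to $\bar{w}^i_{\mathrm{base}}$ via the $o(1)$ approximation, observe that $\bar{w}^i_{\mathrm{base}}-\bar{w}^i_{\mathrm{crit}}$ has constant sign (your $A_i>0$ is exactly this), compute the explicit minimizer of $\bar{w}^i_{\mathrm{base}}$, and then locate that minimizer inside the nested intervals. The only cosmetic difference is that the paper selects the \emph{smallest} index $M$ with $\bar{\tau}_M\in I_M$ rather than working directly with $i=N-k$; your scaling argument for $\tau_{\min}^{N-k}\in I_{N-k}$ is in fact more explicit than what the paper provides, which simply asserts existence of such an $M$.
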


\begin{proof} Since $\bar{w}(\tau)= \bar{w}^i_{\textrm{base}}(\tau)+o(1)$, it follows that $\bar{w}^i_{\textrm{base}}(\tau)$   also satisfies (\ref{cond}) if $\bar{w}(\tau)$ satisfies condition (\ref{cond}).  It  can be easily verified from (\ref{wavg}) that if $\bar{w}^i_{\textrm{base}}(\tau) >  \bar{w}^i_{\textrm{crit}}(\tau)$ then $\bar{w}^i_{\textrm{base}}(\tau)$ attains a minimum at $\tau = \bar{\tau}_i$, where
\bess
\bar{\tau}_i = \frac{1}{\delta H_3-k^i_2}\ln \Big(\frac{H_{11}k^i_1k^i_2e^{(\delta H_3 -k^i_2) \tau_1}}{H_3(2\bar{w}(\tau_1)(\delta H_3-k^i_2)+\delta H_{11}k^i_1)} \Big).
\eess
 Let $M$ be the smallest integer between $1$ and $N-k$ such that $\bar{\tau}_M \in I_M$. Then 
 $\bar{w}$ will also attain its  minimum near $\bar{\tau}_M$. 
 Consequently, $w$ will attain its global minimum near $\bar{\tau}_M \in I_{N-k}$.
\end{proof}


In the next proposition, we will show that the trajectory continues to spiral down and crosses the plane $\{w=0\}$ if  $\bar{w}^i_{\textrm{base}}(\tau)$ is below $\bar{w}^i_{\textrm{crit}}(\tau)$ on $I_i$ for some $i$.

\begin{proposition}
\label{envelope_extinct} Let $\{I_i\}_{i=1}^{N-k}$ be a sequence of nested intervals and $\{k^i_1\}_{i=1}^{N-k}$, $\{k^i_2\}_{i=1}^{N-k}$ be sequences of real numbers as defined in Proposition \ref{envelope_min}. Then there exists  $\tilde{\tau}_e \in I_{N-k}$ such that $w(\tilde{\tau}_e)=0$ with $w'(\tilde{\tau}_e)<0$ if $\bar{w}(\tau)<\bar{w}^{i_0}_{\textrm{crit}}(\tau)$ on $({\tilde{\tau}}_{i_0}, \tau_{k+i_0}]$ for some $1\leq i_0 \leq N-k$ and ${\tilde{\tau}}_{i_0} \in I_{i_0}$, where $\bar{w}^i_{\textrm{crit}}(\tau)$ is defined by (\ref{crit_curve}).
\end{proposition}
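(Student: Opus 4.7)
The plan is to argue by a direct sign analysis of the explicit approximation $\bar{w}^{i_0}_{\textrm{base}}(\tau)$. From the formula (\ref{wavg}) for $\bar{w}^{i_0}_{\textrm{base}}$ and the definition (\ref{crit_curve}) of $\bar{w}^{i_0}_{\textrm{crit}}$, one can write the decomposition
\[
\bar{w}^{i_0}_{\textrm{base}}(\tau) - \bar{w}^{i_0}_{\textrm{crit}}(\tau) = \bigl(\bar{w}(\tau_1) - \bar{w}^{i_0}_{\textrm{crit}}(\tau_1)\bigr)\, e^{\delta H_3(\tau - \tau_1)},
\]
so the sign of this difference is independent of $\tau$ and, when nonzero, its magnitude grows exponentially at rate $\delta H_3 > 0$. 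First I would combine the approximation $|\bar{w}(\tau) - \bar{w}^{i_0}_{\textrm{base}}(\tau)| = o(1)$ on $I_{i_0}$ with the hypothesis $\bar{w}(\tau) < \bar{w}^{i_0}_{\textrm{crit}}(\tau)$ on $(\tilde{\tau}_{i_0}, \tau_{k+i_0}]$ to conclude, up to $o(1)$, that $\bar{w}^{i_0}_{\textrm{base}}(\tau_1) < \bar{w}^{i_0}_{\textrm{crit}}(\tau_1)$, so that the constant coefficient in the displayed formula is negative.

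Given that the coefficient is negative, $\bar{w}^{i_0}_{\textrm{base}}$ is strictly decreasing and, since $\bar{w}^{i_0}_{\textrm{crit}}(\tau) = O(\delta)$ while the growing negative contribution is of order $e^{\delta H_3(\tau - \tau_1)}$, solving $\bar{w}^{i_0}_{\textrm{base}}(\tau_e) = 0$ yields the explicit expression
\[
\tau_e = \tau_1 + \frac{1}{\delta H_3 - k_2^{i_0}}\ln\!\left(\frac{\bar{w}^{i_0}_{\textrm{crit}}(\tau_1)}{\bar{w}^{i_0}_{\textrm{crit}}(\tau_1) - \bar{w}(\tau_1)}\right),
\]
in analogy with the computation of $\tau_e$ in the proof of Lemma \ref{invariance}. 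Since $\delta H_3 - k_2^{i_0} = O(\delta)$, one has $\tau_e - \tau_1 = O(1/\delta)$, whereas the length of $I_{N-k}$ satisfies $\tau_{N-k} - \tau_1 \approx (N-k)\,(2\pi/\vartheta) = O(k/\delta)$ by the assumption $N = O(k/\delta)$. Hence for $k \geq 4$ and $\delta > 0$ sufficiently small, $\tau_e \in I_{N-k}$.

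To transfer the conclusion back to $w$, I would combine $|\bar{w}(\tau) - \bar{w}^{i_0}_{\textrm{base}}(\tau)| = o(1)$ with the Mean Value Theorem bound $|w(\tau) - \bar{w}(\tau)| = O(\delta^2)$, valid as long as $w,\ u^2,\ v^2 = O(\delta)$, to get $w(\tau_e) = o(1)$. Since $\bar{w}^{i_0}_{\textrm{base}}$ has a strictly negative slope at $\tau_e$, the intermediate value theorem produces a nearby $\tilde{\tau}_e \in I_{N-k}$ at which $w(\tilde{\tau}_e) = 0$. Evaluating the third equation of (\ref{normal2}) at this point gives
\[
w'(\tilde{\tau}_e) = \delta\!\left(H_3\, w(\tilde{\tau}_e) + \tfrac{1}{2}H_{11}\, u^2(\tilde{\tau}_e)\right) + O(\delta^2) = \tfrac{\delta H_{11}}{2}\, u^2(\tilde{\tau}_e) + O(\delta^2) < 0,
\]
because $H_{11} < 0$ and, generically, $u(\tilde{\tau}_e) \neq 0$: the zeros of $u$ occur on the $O(1)$ fast time scale, while $\bar{w}$ crosses zero on the slow scale $O(1/\delta)$. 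In the borderline case $u(\tilde{\tau}_e) \approx 0$, a shift of $\tilde{\tau}_e$ by at most half an oscillation period selects a nearby zero of $w$ at which $u^2$ is of the expected $O(\delta)$ order.

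The main obstacle is the scenario in which $\tau_e$ falls outside the single interval $I_{i_0}$, so that the $I_{i_0}$-approximation ceases to be valid before the zero crossing occurs. To handle this, one would propagate the estimate through successive intervals $I_j$, $j > i_0$, using the slow variation $|k_1^{j+1} - k_1^j|,\, |k_2^{j+1} - k_2^j| = O(\delta^4)$ to show that the strict inequality $\bar{w}(\tau) < \bar{w}^{j}_{\textrm{crit}}(\tau)$ persists on each $I_j$. An alternative route that sidesteps the stitching is to integrate (\ref{wbar_eqn}) directly and use the monotonicity of $\{k_1^i\}$ and $\{k_2^i\}$ to majorize $\bar{u^2}(\tau) \leq k_1^{i_0} e^{k_2^{i_0}(\tau - \tau_1)} + o(1)$ on all of $I_{N-k}$, which produces a decaying upper envelope on $\bar{w}$ that is forced to cross zero inside $I_{N-k}$.
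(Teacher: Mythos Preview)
Your main line of argument is essentially the paper's: both reduce to the explicit formula (\ref{wavg}), observe that once $\bar{w}^{i_0}_{\textrm{base}}$ falls below $\bar{w}^{i_0}_{\textrm{crit}}$ it must decrease through zero, and then transfer the zero crossing to $w$ via the $o(1)$ approximation. Your decomposition
\[
\bar{w}^{i_0}_{\textrm{base}}(\tau) - \bar{w}^{i_0}_{\textrm{crit}}(\tau) = \bigl(\bar{w}(\tau_1) - \bar{w}^{i_0}_{\textrm{crit}}(\tau_1)\bigr)\, e^{\delta H_3(\tau - \tau_1)}
\]
is actually a cleaner way to see the sign-constancy than the paper's direct computation of the root $\tilde{\tau}_i$. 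The paper handles your ``obstacle'' (that $\tau_e$ may lie outside $I_{i_0}$) not by slow variation of $k_1^i,k_2^i$ but by a single monotonicity observation: the family $\{\bar{w}^i_{\textrm{crit}}\}$ is increasing in $i$, so the hypothesis $\bar{w}(\tau)<\bar{w}^{i_0}_{\textrm{crit}}(\tau)$ automatically propagates to $\bar{w}(\tau)<\bar{w}^{j}_{\textrm{crit}}(\tau)$ for all $j\geq i_0$, and one then chooses $j$ large enough that the zero crossing $\tilde{\tau}_j$ lands in $I_j$. This is more direct than your stitching via $O(\delta^4)$ increments. Your explicit verification of $w'(\tilde{\tau}_e)<0$ from the third equation of (\ref{normal2}) is a detail the paper leaves implicit.

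Your \emph{alternative route}, however, has a sign error. An upper bound $\bar{u^2}(\tau)\leq U(\tau)$ feeds into (\ref{wbar_eqn}) with $H_{11}<0$ to give $\bar{w}'\geq \delta H_3\bar{w}+\tfrac{\delta}{2}H_{11}U$, which by comparison yields a \emph{lower} bound on $\bar{w}$, not the ``decaying upper envelope'' you claim. To force $\bar{w}$ through zero one would need a \emph{lower} bound on $\bar{u^2}$, and the monotonicity of $\{k_1^i\},\{k_2^i\}$ does not obviously furnish one (the two sequences push in opposite directions). So this shortcut does not work; stick with the propagation argument.
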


\begin{proof}  We first note that the monotonic properties of $\{k_1^i\}$ and $\{k_2^i\}$ imply that $\bar{w}^i_{\textrm{crit}}(\tau)< \bar{w}^{i+1}_{\textrm{crit}}(\tau)$ on $I_i$. Moreover, if $\bar{w}^i_{\textrm{base}}(\tau)< \bar{w}^i_{\textrm{crit}}(\tau)$ for some $i$, then it follows from (\ref{wavg}) that $\bar{w}^i_{\textrm{base}}(\tau)$ decreases and eventually becomes negative at $\tau =\tilde{\tau}_i$, where
 \bes
 \label{extinct}
\tilde{\tau}_i = \frac{1}{\delta H_3-k^i_2}\ln \Big(\frac{\delta H_{11}k^i_1e^{(\delta H_3 -k^i_2) \tau_i}}{\delta H_{11}k^i_1 +2\bar{w}(\tau_1)(\delta H_3-k^i_2)} \Big).
\ees
If there exists some point ${\tilde{\tau}}_{i_0} \in I_{i_0}$ such that $\bar{w}(\tau)<\bar{w}^{i_0}_{\textrm{crit}}(\tau)$ on $({\tilde{\tau}}_{i_0}, \tau_{k+i_0}]$ for some $1\leq i_0\leq N-k$, then the monotonicity  of the family $\{\bar{w}^{i}_{\textrm{crit}}(\tau)\}_{i=1}^{N-k}$ yields that $\bar{w}(\tau)<\bar{w}^{i}_{\textrm{crit}}(\tau)$ on $({\tilde{\tau}}_{i_0}, \tau_{k+i}]$ for all $i\geq i_0$. Since $|\bar{w} -\bar{w}^i_{\textrm{base}}|=o(1)$, there exists an integer $j\geq i_0$ such that $\bar{w}^j_{\textrm{base}}(\tau)< \bar{w}^{j}_{\textrm{crit}}(\tau)$  with $\bar{w}^j_{\textrm{base}}(\tilde{\tau}_j) =0$ and $\bar{w}^j_{\textrm{base}}(\tau)<0$ for $\tau>\tilde{\tau}_j$, where $\tilde{\tau}_j \in I_j$ is defined by (\ref{extinct}). 
  Thus $\bar{w}(\tau)$ and $w(\tau)$ change their signs at  $\tilde{\tau}_e=\tilde{\tau}_j+o(1)$ and remain negative thereafter, thereby proving the proposition. 
\end{proof}

Note that  Propositions \ref{envelope_min} and \ref{envelope_extinct} are complementary of each other since a solution passing through a vicinity of $q_e$ either meets the conditions in Proposition \ref{envelope_min} or Proposition \ref{envelope_extinct}. To see how the method plays out,  we start by approximating $\bar{u^2}(\tau)$ by $\bar{u^2_1}_{\textrm{base}}(\tau)$ on the interval $I_1$ and examine the position of $\bar{w}(\tau)$ with respect to $\bar{w}^{1}_{\textrm{crit}}(\tau)$. If $\bar{w}(\tau)$ meets the condition in Proposition \ref{envelope_extinct}, then we obtain an early indicator of a sign change in $w(\tau)$. However, if that is not the case, we consider $I_2$ and use $\bar{u^2_2}_{\textrm{base}}(\tau)$ as a new approximation to  $\bar{u^2}(\tau)$ on the interval $I_2$. We again examine the position of $\bar{w}(\tau)$ with respect to $\bar{w}^{2}_{\textrm{crit}}(\tau)$ and check if $\bar{w}(\tau)$ meets the condition in Proposition \ref{envelope_extinct}. If not, we subsequently repeat the process and apply the technique on the  intervals $I_i, 3\leq i \leq N-k$ until $\bar{w}$ meets the  condition in Proposition \ref{envelope_extinct}. If it fails, then (\ref{cond}) must hold, in which case $\bar{w}$ attains its minimum.

\vspace{0.2in}


\section{Numerical results}
\label{sec:num_results}

\subsection{Behavior of system (\ref{nondim3})  near the  singular Hopf bifurcation} We return to  system (\ref{nondim3}) and compute the coefficients of the normal form (\ref{normal2})  near the  singular Hopf bifurcation. Treating $h$ as the input parameter and the other parameter values as in (\ref{parvalues}), 
the coefficients of the normal form (\ref{normal2}) are
\bes \label{parvalues2}
\delta \approx 0.2504,\ F_{13} \approx 0.1173,\ F_{111} \approx -0.8663,\ H_3\approx 0.0377,\  H_{11}\approx -0.1691,
\ees
with $\alpha(h)= -145.8265h+38.589$ being the varying parameter. 
The eigenvalues of the variational matrix of system (\ref{normal2}) at the equilibrium $q_e=(0, 0, 0)$ (which corresponds to the coexistence equilibrium $E^*$ of system (\ref{nondim3})) are $\lambda_1 \approx 0.00943$ and a complex pair $\lambda_{2,3}$ with  $\textnormal{Re}(\lambda_{2,3})<0$ if $-3.997<\alpha<0$.  
A subcritical  Hopf bifurcation occurs at  $\alpha=\alpha_H=0$  (which corresponds to  $H_2\approx 0.2646$ in system (\ref{nondim3})), where the first Lyapunov coefficient $l_1(0) \approx 0.0934>0$.  In an immediate neighborhood of the Hopf bifurcation, system (\ref{normal2})  undergoes a subcritical torus bifurcation at $\alpha=-0.011$ (corresponding to  $TR \approx 0.2647$ in system (\ref{nondim3})), stabilizing the family of cycles $\Gamma^{\alpha}$ born at $\alpha=0$. 
  
   \begin{figure}[h!]     
  \centering 
   \subfloat[IC = $(0.452,0.432,0.329)$]{\includegraphics[width=6.4cm]{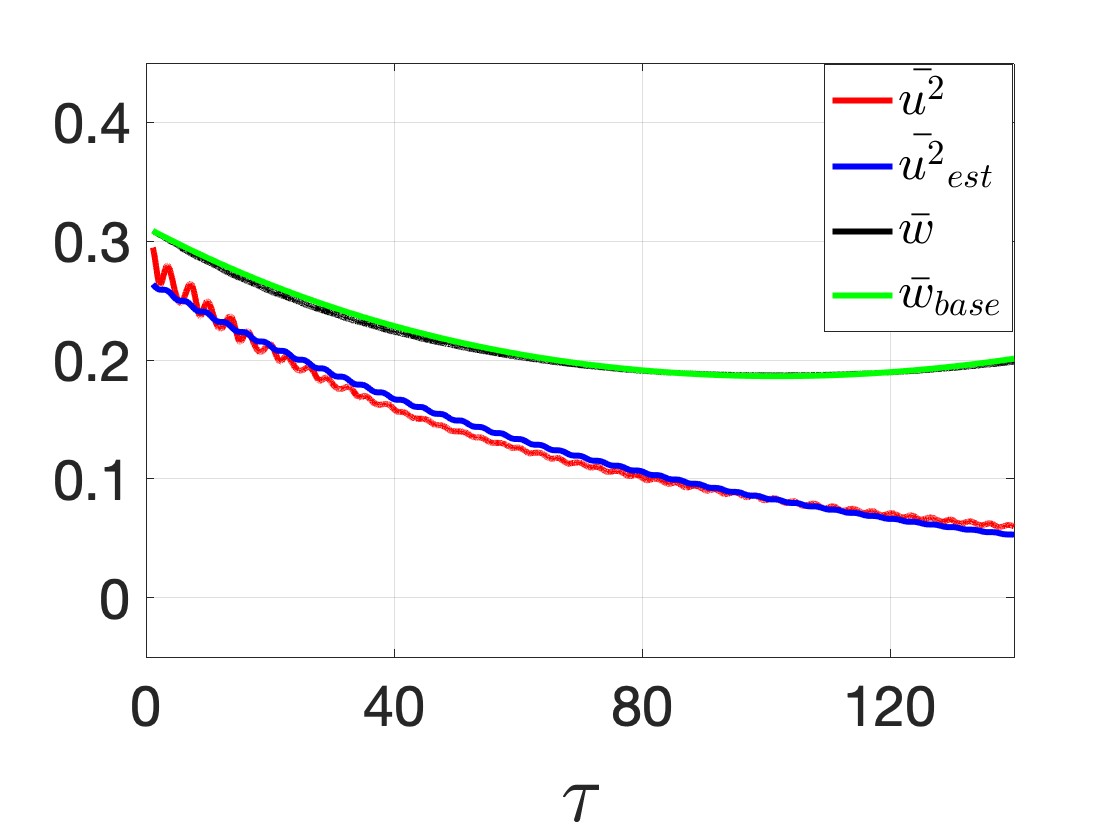}}
    \subfloat[IC=$(0.452,0.432,0.259)$]{\includegraphics[width=6.4cm]{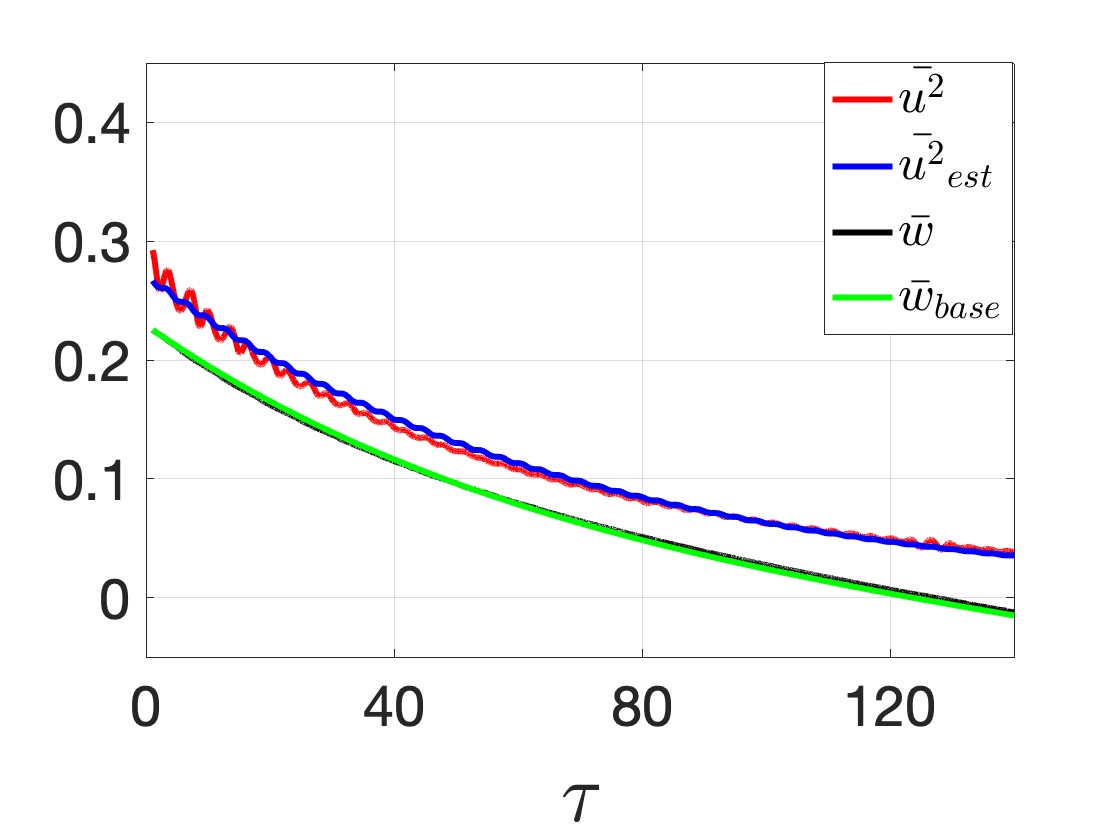}}\quad
  \caption{Graphs of $\bar{u^2}(\tau)$,  $\bar{w}(\tau)$ and their approximations by $\bar{u^2}_{\textrm{est}}(\tau)$ and $\bar{w}_{\textrm{base}}(\tau)$ respectively for system (\ref{normal2}) with parameter values as in (\ref{parvalues2}) and $\alpha=-0.04$}
  \label{moving_averages}
  \end{figure}
  
        \begin{figure}[h!]     
  \centering 
    \subfloat[]{\label{moving_averages_threshold_a}{\includegraphics[width=6.4cm]{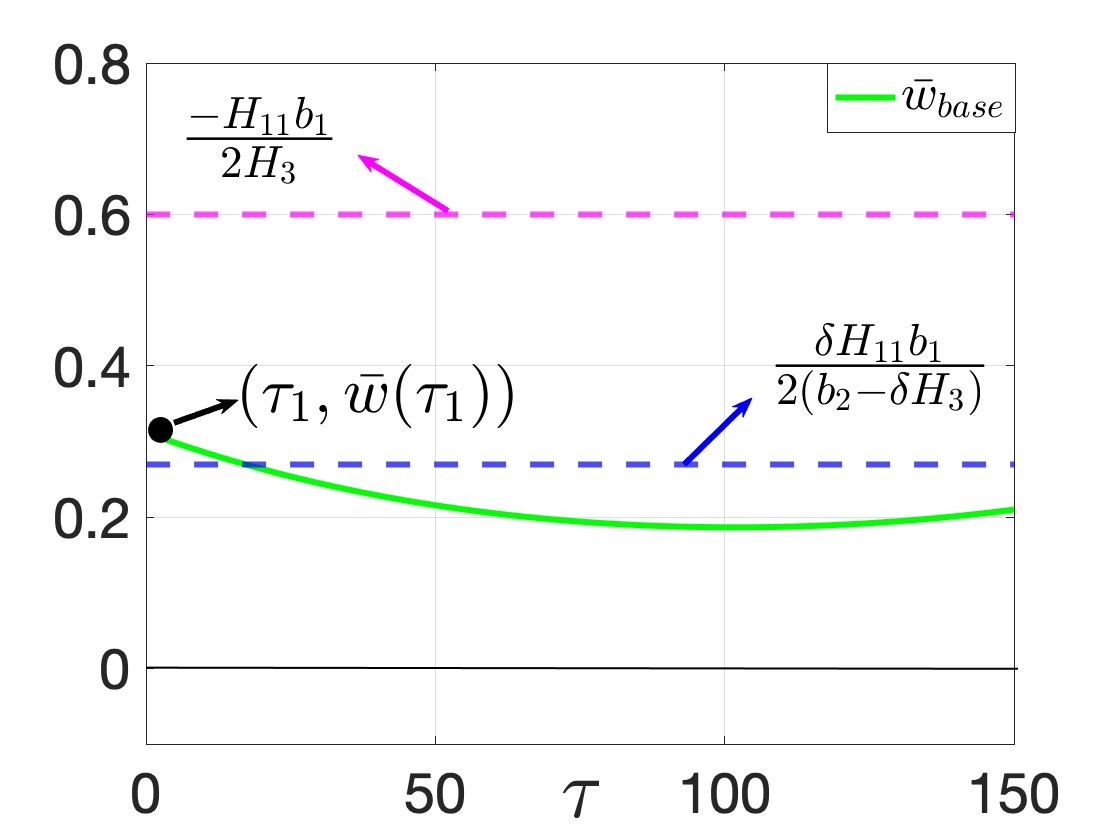}}}
    \subfloat[]{\label{moving_averages_threshold_b}{\includegraphics[width=6.4cm]{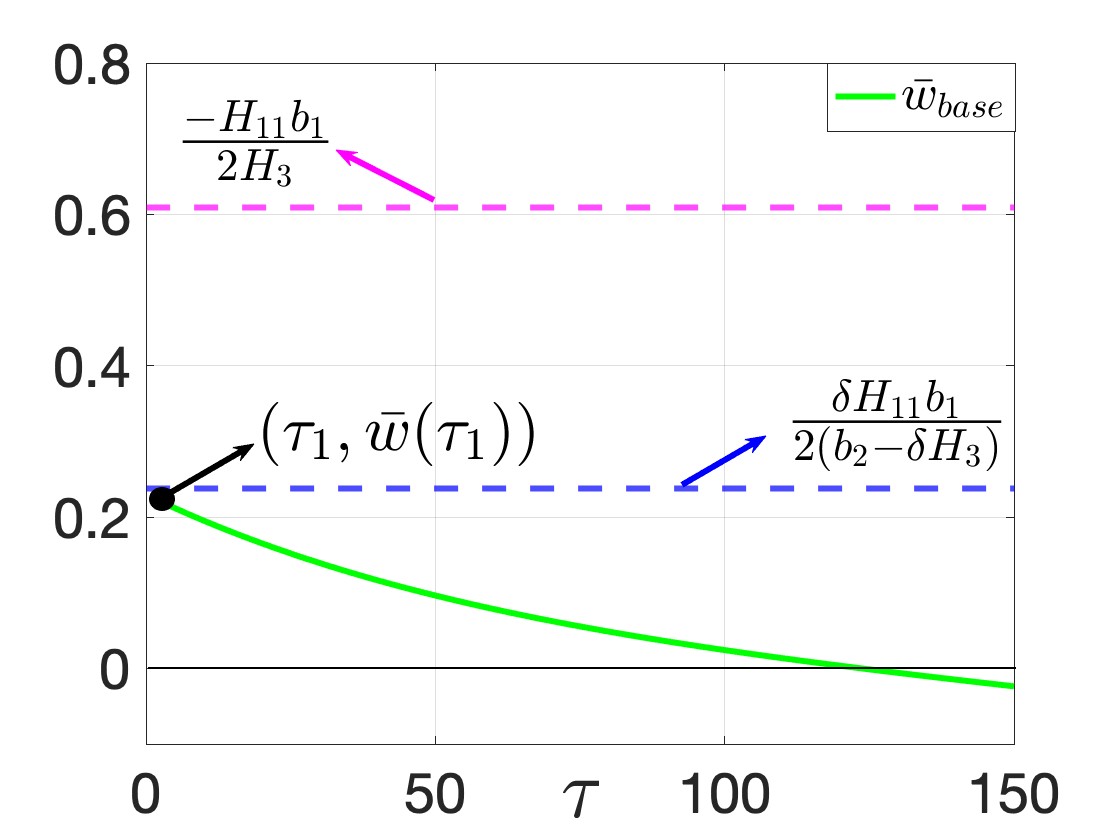}}}
  \caption{Behavior of $\bar{w}_{\textrm{base}}(\tau)$ for two different initial values as in figure \ref{moving_averages}. In both cases, $N=18$ oscillations were chosen. (a) Here  $\tau_1=1.094$, $\tau_N =109.2$,  $b_1=0.2799$ and $b_2=-0.0128$. (b) Here $\tau_1=1.0784$,  $\tau_N =109.04$,  $b_1=0.2799$ and $b_2=-0.0156$.}
  \label{moving_averages_threshold}
  \end{figure}
  
  In a vicinity of $\alpha=-0.011$, system (\ref{normal2}) exhibits bistability between $\Gamma^{\alpha}$ and $(0, 0, -\infty)$. 
  Note that $|\alpha|=O(H_3)$ here. In both cases, $u(\tau)$ and $v(\tau)$ exhibit very similar oscillatory patterns near $q_e$, however,  $w(\tau)$ attains a global minimum in one case, while in the other, it fails to attain a minimum and becomes negative. 
 In figure \ref{moving_averages}, we plot the time series of $\bar{u^2}(\tau)$, $\bar{w}(\tau)$ along with the approximating functions  $\bar{u^2}_{\textrm{est}}(\tau)$ and $\bar{w}_{\textrm{base}}(\tau)$ respectively, where  $\bar{u^2}_{\textrm{est}}(\tau)$  and $\bar{w}_{\textrm{base}}(\tau)$ are defined by (\ref{uest}) and (\ref{west1}) respectively. Note that the approximating curves lie very close to the actual curves uniformly over the time interval under consideration. By Theorem \ref{bistability}, $\bar{w}_{\textrm{base}}(\tau)$ attains a minimum if it satisfies condition (\ref{condnew}) as shown in figure \ref{moving_averages_threshold_a}. On the other hand, if $\bar{w}(\tau_1)<\frac{\delta H_{11} b_1 e^{b_2 \tau_1}}{2(b_2-\delta H_3)}$, $\bar{w}_{\textrm{base}}(\tau)$ eventually becomes negative as shown in figure \ref{moving_averages_threshold_b}.



\subsection{Analysis of the time series in  figure \ref{timeseries_all}} 

Finally, we consider the time series in figure \ref{timeseries_all}. We first recall that the initial conditions for both solutions lie in a close neighborhood of $W^s(E^*)$, the stable manifold of $E^*$, and are in $\Sigma^+$, where $\Sigma^+$ is the upper-half space containing the limit cycle $\Gamma$ of system (\ref{nondim3}). 
We next note that the tangent plane $\Sigma$ in system  (\ref{nondim3}) corresponds to $\{w=0\}$ in system (\ref{normal2}); the limit cycle $\Gamma$ and the boundary equilibrium point $E_{xz}$ lie on the opposite sides of $\{w=0\}$. We now use the transformations in (\ref{convert}) to map $(x, y, z)$ to $(u, v, w)$ and plot the solutions in their normal form with respect to the slow time variable $s$,  as shown in  
figure \ref{transformed_normal}.
 \begin{figure}[h!]     
  \centering 
  \subfloat[]{\includegraphics[width=6.1cm]{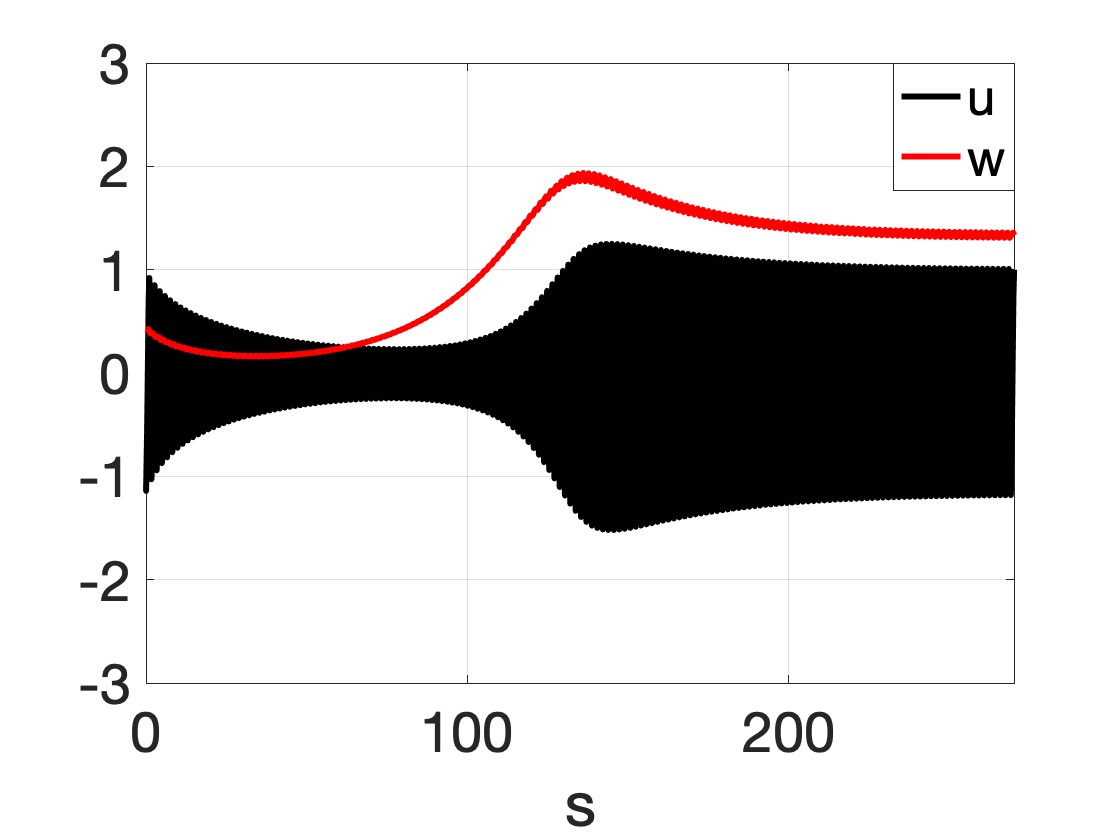}}
  \quad
     \subfloat[]{\includegraphics[width=6.1cm]{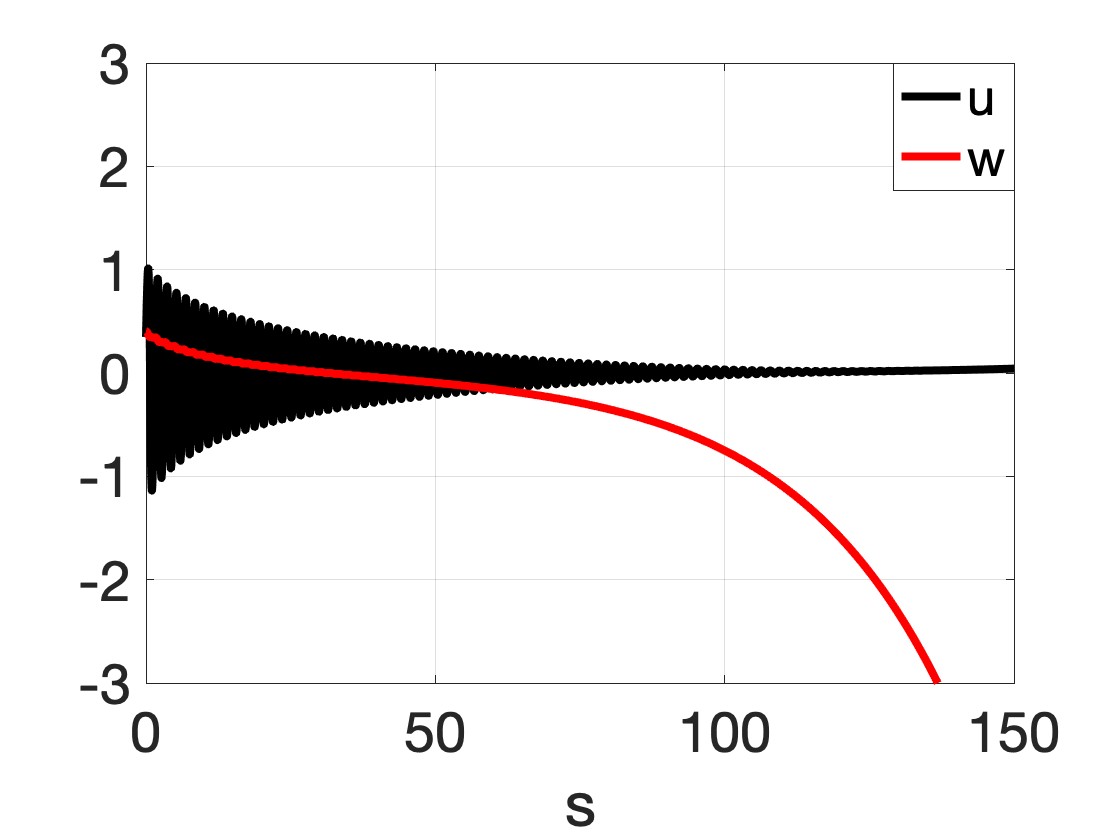}}
 \caption{Time series of $x(s)$, $y(s)$ and $z(s)$ in figure \ref{timeseries_all}  in terms of the normal form variables $u(s)$ and $w(s)$. The left panel corresponds to figure \ref{timeseries_all_a}  and the right panel to figure \ref{timeseries_all_b}. }
 \label{transformed_normal}
\end{figure}
 Below, we write down the slow variable $w$ in terms of $x, y$, and $z$ using (\ref{convert}), which will be used in monitoring early warning signals:
\[ w =  - 0.286x+ 689.716y - 207.413 z+ 5.315.
\] We also draw $\Sigma$ and the inverse image of  
  the funnel  $\mathbb{R}^3\setminus \Omega_{\alpha}$ in the $xyz$-coordinates with  the phase spaces of the trajectories corresponding to figure \ref{timeseries_all} superimposed on them  as shown in   figure \ref{bistable_phasespace}. The initial transient phases of the trajectories were shown in figure \ref{initial_transients}.  As expected from the analysis of  system (\ref{normal2}), figure \ref{bistable_phasespace} shows that the solution associated with figure \ref{timeseries_all_a} enters into the funnel and approaches $\Gamma$, while the solution corresponding to figure \ref{timeseries_all_b}  always remains outside the funnel, and eventually approaches the boundary equilibrium state $E_{xz}$.

\begin{figure}[h!]     
  \centering 
    \subfloat[]{\includegraphics[width=6.3cm]{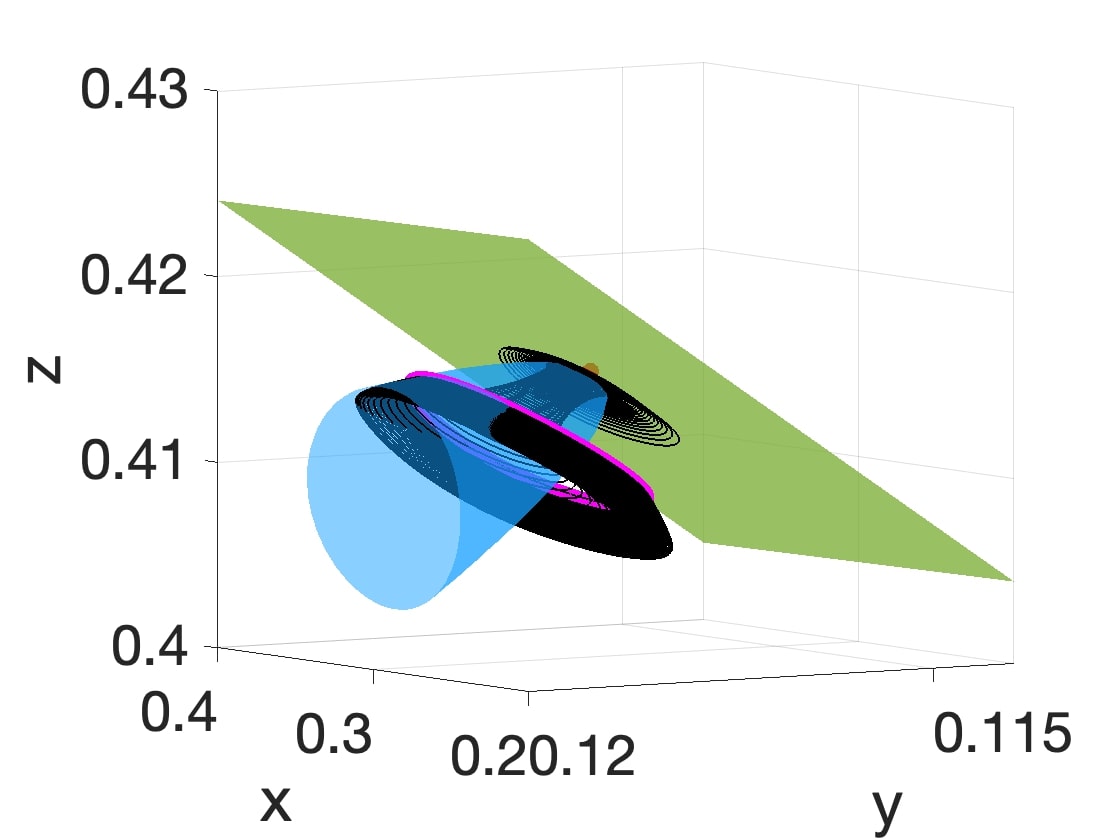}}
  \subfloat[]{\includegraphics[width=6.3cm]{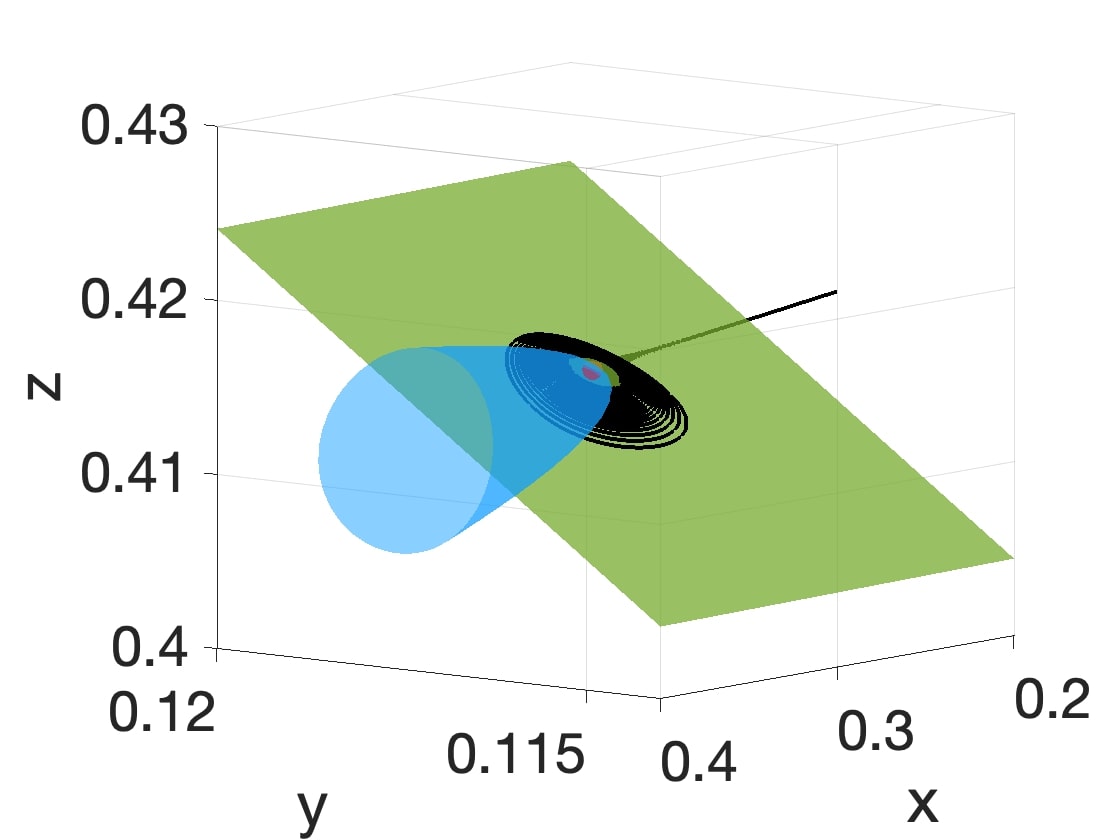}}\qquad
 \caption{Phase spaces of the solutions in  figure \ref{timeseries_all} approaching $\Gamma$ (in magenta) or  $E_{xz}=(0.357, 0, 0.615)$.  Also shown are $E^*$ (red dot),  $\Sigma$ (olive green plane), and the inverse image of the funnel $\mathbb{R}^3\setminus \Omega_{\alpha}$ (deep sky blue surface). (A): Initial conditions: $(0.2785,0.1181, 0.4164) \in \Sigma^+$. The solution enters into the funnel to approach $\Gamma$. (B): Initial conditions: $(0.2831,0.1184, 0.417) \in \Sigma^+$. The solution fails to enter the funnel and approaches  $E_{xz}$.}
 \label{bistable_phasespace}
\end{figure}

We next use the method discussed in Section \ref{sec:early_warning} to detect an early warning signal of an impending transition in the population of $y$.  To this end, we consider the time series in  the normal form variables shown in figure \ref{transformed_normal} 
and  construct intervals $I_i=[s_1, s_{k+i}]$, $i=1, 2, \ldots N-k$ with  $N=41$ and $k=5$, where $s_i$ are the locations of relative maxima of $u(s)$  such that $\{u(s_i)\}$ is decreasing. The choices for $N$ and $k$ are flexible. The value of $N$ was chosen to ensure that the amplitude of oscillations of $u$ decayed at least by a factor of $e$ when $s=s_N$ as stated in Section \ref{sec:early_warning}. Similarly, the value of $k$, which represents an optimal number of oscillations required for a good approximation of $\bar{u}$ in the interval $I_1$, was chosen to be five. Using a least square data fitting tool, we numerically approximate $\bar{u^2_i}_{\textrm{base}}(s)= k^i_1 e^{k^i_2(s-s_1)}$ and obtain that the amplitudes $\{k^i_1\}_{i=1}^{36}$ and the decay constants $\{k^i_2\}_{i=1}^{36}$ are monotonically decreasing and increasing respectively for both solutions. Corresponding to figure \ref{transformed_normal}(a), we obtain $k^1_1 =0.547$, $k^{36}_ 1=0.4643$,   $k^1_2=-0.1151$,  $k^{36}_2 =-0.0627$ with $s_1 =0.27$, while that for figure  \ref{transformed_normal}(b), we obtain $k^1_1=0.5489$, $k^{36}_1 =0.5089$,   $k^1_2 =-0.0995$, $k^{36}_2 =-0.0757$  with $s_1 =0.375$.  Note that $k^i_1=O(\delta) \approx 0.25$ and $k^i_2=O(\alpha \delta)\approx -0.1$. 

\begin{figure}[h!]     
  \centering 
  \subfloat[]{\label{early_warn_a}{\includegraphics[width=6.3cm]{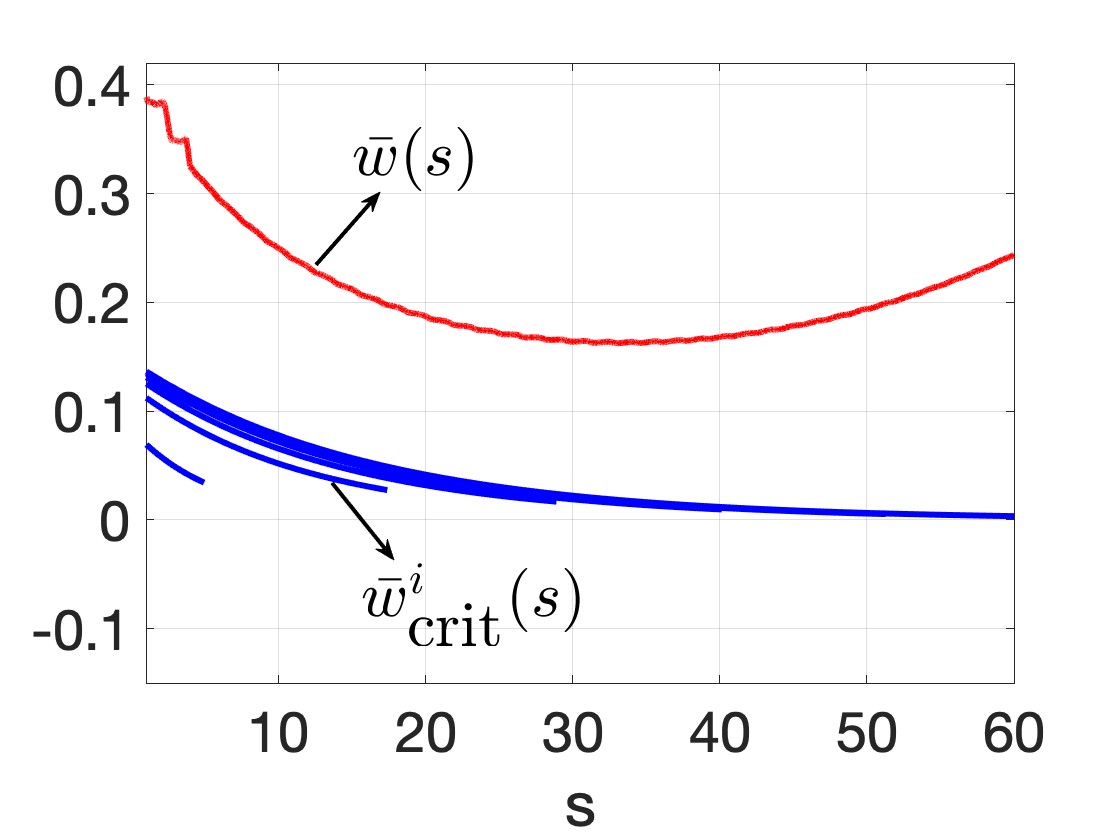}}}
     \subfloat[]{\label{early_warn_b}{\includegraphics[width=6.3cm]{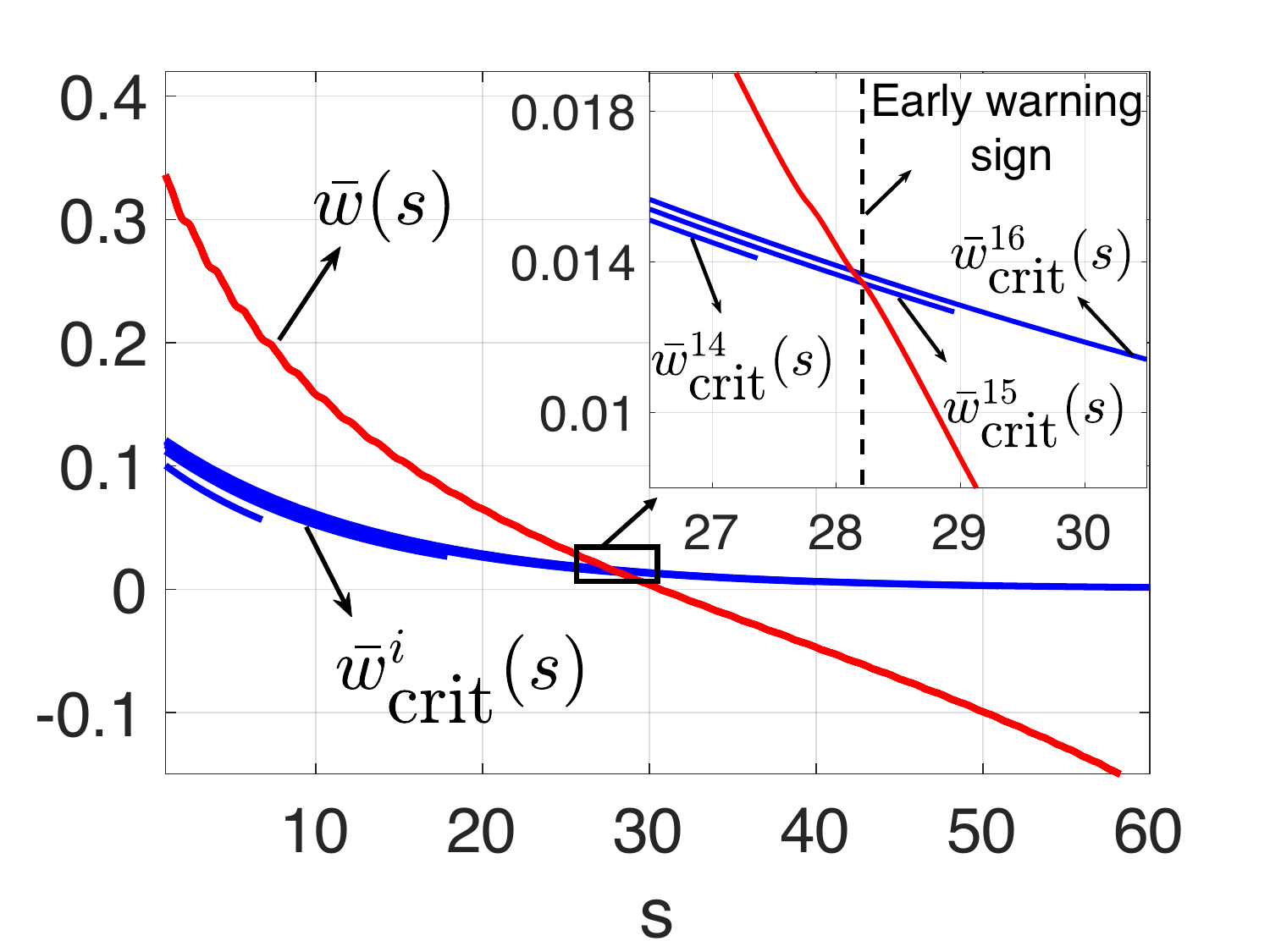}}}
     \quad
    \subfloat[]{\label{early_warn_c}{\includegraphics[width=6.1cm]{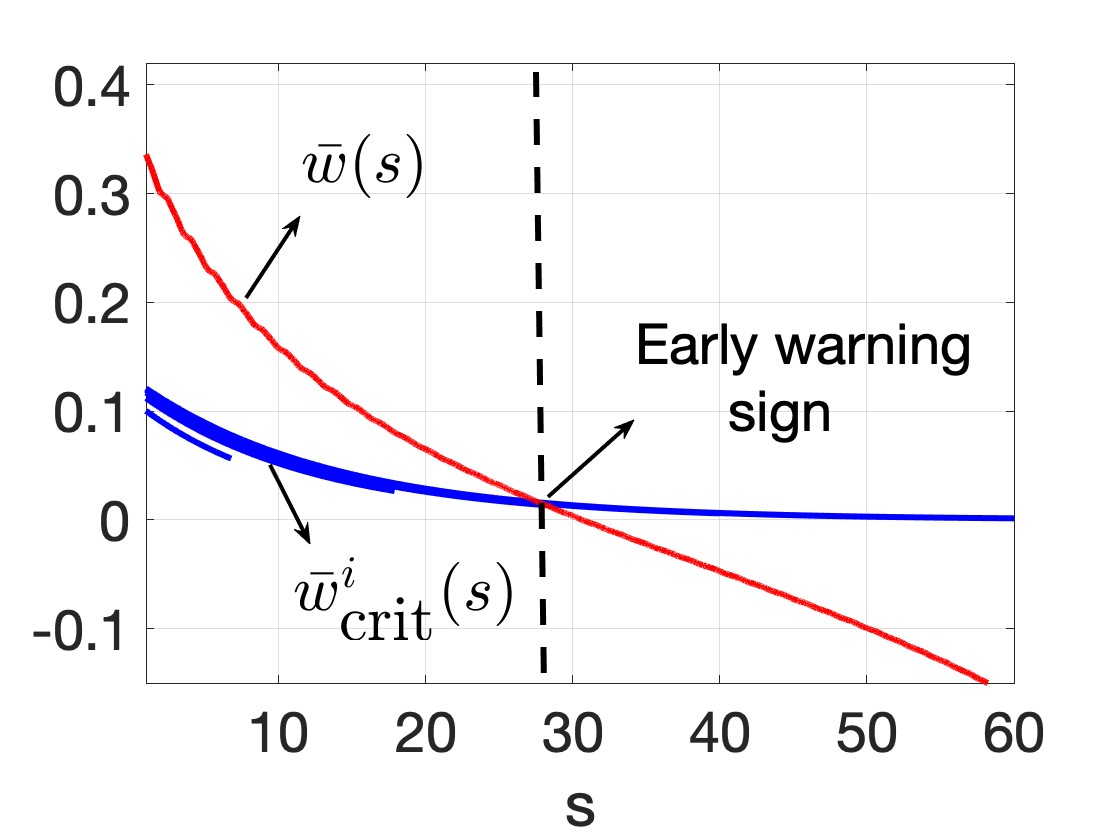}}}
     \subfloat[]{\label{early_warn_d}{\includegraphics[width=6.1cm]{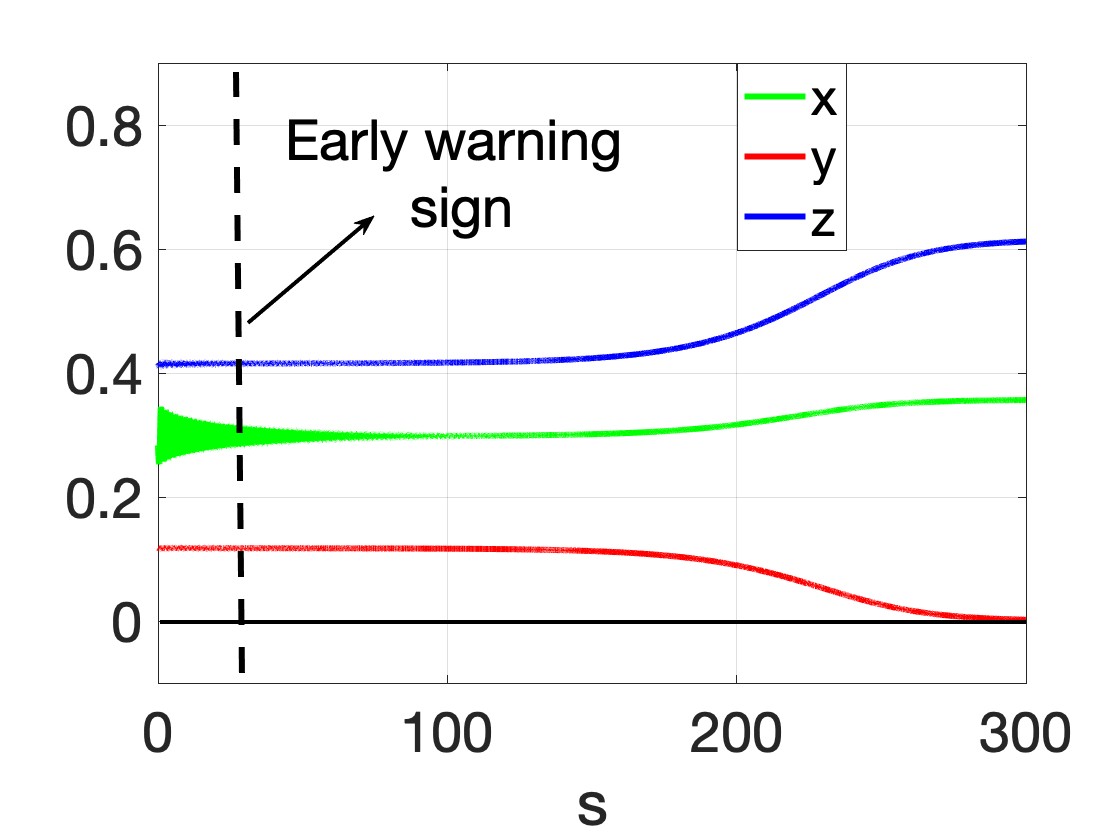}}}   
 \caption{Detection of an early warning sign of species' extinction in system (\ref{nondim3}) for initial conditions as in figure \ref{timeseries_all}.  (a) All species coexist as $\bar{w}(s)$ attains a global minimum. (b) One of the species will eventually extinct as predicted by the crossing of $\bar{w}$ with $\bar{w}^{15}_{\textrm{crit}}$. (c)-(d) Earliest warning sign of extinction of $y$ is detected at $s\approx 28.3$ when $\bar{w}(s)$ crosses $\bar{w}^{15}_{\textrm{crit}}(s)$.}
 \label{application_normal}
\end{figure}

We now find the critical curves $\{\bar{w}^{i}_{\textrm{crit}}(s)\}$, $1\leq i \leq 36$,  defined by (\ref{crit_curve}), where $s=\delta \tau$, and  monitor the position of  $\bar{w}(s)$ with respect to $\bar{w}^{i}_{\textrm{crit}}(s)$ on $I_i= [s_1,s_{i+k}]$ for each $i$.  
It turns out that $\bar{w}(s)$ corresponding to figure \ref{transformed_normal}(a) lies above  $\bar{w}^{i}_{\textrm{crit}}(s)$ for all $1\leq i\leq 36$, and therefore by  Proposition \ref{envelope_min}, it attains its minimum  (figure \ref{early_warn_a}). On the other hand,  $\bar{w}(s)$ associated with figure \ref{transformed_normal}(b) crosses  $\bar{w}^{15}_{\textrm{crit}}(s)$ at $s={\tilde{s}}_{15} \approx 28.3$  and remains below $\bar{w}^{i}_{\textrm{crit}}(s)$ for $ s> {\tilde{s}}_{15}$  and all $i\geq 15$, and therefore by Proposition  \ref{envelope_extinct},  $\bar{w}(s)$ becomes negative for some $s> {\tilde{s}}_{15}$ (figures \ref{early_warn_b}-\ref{early_warn_c}). 
Since $E_{xz}$ is the other attractor that lies below the $\{w=0\}$ plane, the change in sign of $w(s)$ gives a signal of an impending transition in the population density of $y$.

We also remark that the time series of $x$, $y$ and $z$ in figure \ref{early_warn_d} do not show any significant changes in the population trends until $s\approx 200$, after which the population of $y$  changes abruptly. At this point, it could be too late for an intervention to prevent $y$ from extinction. Our analysis allows us to detect the impending regime shift at $s={\tilde{s}}_{15}$, significantly in advance of the transition, thus giving access to an early intervention.


\section{Summary and outlook}
\label{sec:conclusion}

 Detection of early warning signals of an impending population shift  is challenging mainly since the underlying mechanisms for regime shifts are not always known. Most ecosystems are usually resilient to slow changes in environmental parameters, showing little change in their state prior to a transition \cite{biggs}. Though statistical indicators of critical transitions have been demonstrated in many systems, studies have shown that they do not necessarily apply for anticipating all types of regime shifts \cite{dakos}. Hence, analysis of long transients  and development of relevant mathematical methods and tools  can offer an alternate avenue for efficient forecasting \cite{morozovetal}.  In this paper, we took an approach to predict a regime shift and establish an early recognition of the impending transition by exploiting the inherent timescale separation in the oscillatory patterns of long transient dynamics in a class of three-dimensional predator-prey models.

We focused on a class of slow-fast predator-prey models that studies interaction between two species of predators competing for their common prey with asymmetric competition between them, and exhibits a bistable behavior in a parameter regime near a subcritical singular Hopf bifurcation. We considered one of the attractors to be  periodic that resulted from a Neimark-Sacker  bifurcation of the saddle cycle born from the subcritical singular Hopf bifurcation of a coexistence equilibrium point, while the other could be either a point attractor or a periodic attractor that doesn't necessarily bifurcate from the singular Hopf point. We assumed that the two attractors lie on opposite sides of the tangent plane of the stable manifold of the coexistence equilibrium point.  
The coexistence equilibrium is saddle-focus with one-dimensional unstable manifold such that one of its branches  tends to one of the attractors while the other tends to the other attractor. We further assumed that the rate of expansion in the axial direction does not exceed the rate of radial contraction in a vicinity of the equilibrium, both rates being very small.  With these assumptions, we took a systematic rigorous approach to unravel the dynamics near such an equilibrium and predict asymptotic behaviors of solutions exhibiting nearly indistinguishable transient dynamics in a vicinity of it.  In particular, we utilized the geometry of the invariant manifolds, the underlying separation of timescales, and the bifurcation structure to develop a suitable method for analyzing the transients.

The local bifurcation analysis in the vicinity of the singular Hopf bifurcation shed light on the mechanism underlying the intrinsic nature of the observed transients, characterized by their rapid oscillations and slow variation in the amplitude.  We noted that the rapid oscillations in the transients can be attributed to  the singular Hopf bifurcation of the saddle-focus equilibrium,  while the slow radial contraction and slow axial expansion along the invariant manifolds of the saddle-focus, another distinguishing trait of the transients studied in this paper,  were caused by the proximity of the system to a zero-Hopf bifurcation (see figure \ref{two_par_bif} for system (\ref{nondim3})). The combined effect of the two bifurcations (which could also be termed as a singular zero-Hopf bifurcation) gave rise to the long transients in system (\ref{normal1}). To the best of our knowledge, such a mechanism underlying long transients in higher-dimensional models has not been explored yet. We addressed the difference between the timescales of the high-frequency oscillations and the slow drift rate in the amplitude by reducing the system to a suitable normal form, so that the rates of expansion and contraction towards the saddle-focus equilibrium could be expressed in terms of the distance of the bifurcation parameter, $\alpha(h)$, from the subcritical Hopf bifurcation and the coefficient $H_3$ of the linear term in the equation of the slow variable $w$ of the normal form. Furthermore, treating the slow variable as a bifurcation parameter, we gained insight into the signs of the coefficients of the equations of the fast variables in the normal form, which was crucial in analyzing the bistable behavior.  

A key component of our study included the derivation of an analytical expression for $\bar{w}$, the averaged dynamics of the slow variation along the unstable manifold of the saddle-focus equilibrium using a moving average technique. We noted that $\bar{w}$ can be expressed as a linear combination of an exponential decay curve and an exponential growth curve, and its long term behavior is determined by the competing effect of the coefficients of the exponential curves. The analysis then led us in finding a set of sufficient conditions for predicting asymptotic behaviors of solutions that start in the vicinity of the saddle-focus point. The analysis was further extended to devise a method for finding an early warning signal of an impending population collapse. The two qualitative different behaviors of $\bar{w}$ provided insight into defining a sequence of critical curves $\{\bar{w}^{i}_{\textrm{crit}}\}_{i=1}^N$, which potentially separate solutions approaching the periodic attractor from solutions approaching the other attractor, and their positions  with respect to the slow variable facilitated obtaining early indicators of a forthcoming collapse.  Our analysis suggests that a successful early warning signal can be obtained by monitoring the behavior of the slow variable $w$, which can be expressed as a suitable linear combination of the  state variables $x, y$, and $z$, near the saddle-focus equilibrium (see the change in coordinates transformation in  (\ref{convert})). Our analysis sheds light on the ability to not only distinguish between time series with contrasting long-term dynamics despite presenting similar trends at the onset (see figure \ref{timeseries_all}), but also on the power to detect warning signals of catastrophic population collapses remarkably ahead of time as shown in figure \ref{application_normal}.

The geometric treatment of system (\ref{normal1}) via the normal form coordinates addresses the subtle nature of the apparently alike  dynamics near the saddle-focus equilibrium.  Rectifying the local unstable manifold of the saddle-focus equilibrium along one of the coordinates axes of the transformed system, namely the $w$-axis, and mapping the principal directions of the local stable manifold to the other two coordinate axes, the $u$-$v$ axes, using the transformations in \cite{BB}, enabled us in viewing and representing the ``near cylindrical symmetry" of the dynamics near the saddle-focus in terms of the normal form variables. In addition to visualizing the locally invariant manifolds, we get a closer view of the geometry of the basin of attraction of the periodic attractor in the vicinity of the saddle-focus equilibrium. We obtain an analytical expression of the graph of the basin boundary of the two attractors, namely the surface of $\Omega_{\alpha}$, and observe that it  partitions the half-space lying above the stable eigenplane into two sets. This  geometrical perspective subsequently guided us in explaining  the dynamics inside the volume enclosed by the locally stable manifold while elucidating the concurrence of long-term contrasting dynamics as solutions experience a slow passage through the saddle-focus.

Near the singular Hopf point, we remark that the time to approach an asymptotically stable attractor diverges as the initial conditions approach the  basin boundary. 
 In general, such quasi-stationary dynamics or ``crawl-by" behavior  occur when a system spends a significant amount of time near a saddle-type invariant set as it approaches the invariant set along the stable manifold  \cite{Hastings2, rubinetal}.  The classical example  of a system displaying such dynamics is the  Lotka-Volterra model of competition between two species for limited resources \cite{murray}, which can exhibit bistability between the two competitive exclusion states of the species (i.e. one species is the sole survivor) whose basins of attraction are separated by the stable manifold of the saddle coexistence equilibrium.  
 Our work extends this idea to higher dimensions.  We further remark that the effect of saddle-type invariant sets leading to long transients have been also observed in several field and laboratory studies \cite{Hastings2, morozovetal}. One such example includes the bistable dynamics observed in  laboratory yeast populations as a feedback between population and evolutionary dynamics \cite{gore_nature, gore_plos}. The system displays damped oscillatory behavior characterized by two timescales as it approaches a tipping point (fig. 5 in supplementary material of \cite{gore_nature}), similar to the transient oscillatory dynamics seen in system (\ref{nondim3}).

 The analysis performed in this paper can be applied to other types of predator-prey models. One such example may include intraguild food web models studying the interaction between a class of intraguild predators,  a class of intermediate predators, and their shared prey. Such models have been documented to exhibit bistability between periodic and point attractors (see Fig. 10 in \cite{kang}) or between two point attractors (see Fig. 11 in \cite{kang}), where the invariant manifolds of a nearby saddle-focus equilibrium seem to  organize the dynamics. Depending on the choice of the functional responses of the predators, the coexistence equilibrium point of such systems may exhibit zero-Hopf bifurcation and a branch of torus bifurcations can emanate from the zero-Hopf point \cite{BL}. It remains to be checked if the models considered in \cite{BL} meet conditions (Q1)-(Q5).  We also remark that our analysis can be extended beyond predator-prey models. Systems with similar bifurcation structures and geometric configurations having multiple coexisting attractors whose basins of attraction are separated by the invariant manifolds of a saddle-focus equilibrium can be analyzed using the techniques used in this paper. The technique for analyzing the transient dynamics will be applicable to any time series that features fast oscillations with a slowly varying amplitude. Ecological data that display slowly damped oscillatory behavior with an underlying timescale separation \cite{gore_nature} can be studied using our method by formulating suitable data-driven models which may fall within the class of generic predator-prey models considered in this paper. The method developed in this work can thus be viewed as an effort to understand the behavior much in advance of a critical transition in systems with nonlinear interaction between multiple species, which can presumably be helpful to understand critical transitions in complex ecological networks.

The dynamics of system (\ref{nondim3}) are associated with survivability and coexistence of species and thereby bear connections to the phenomena of permanence and persistence in ecological communities \cite{cantrell}. It is known that random variations in births or deaths or changes in environmental conditions can shift a population from the basin of attraction of one attractor to the other resulting in uncertainties in prediction of long-term behaviors in an ecosystem. Hence, a key question is how does our analysis contribute to finding early warning indicators of population extinction in a stochastic setting? The answer to this question lies in understanding the deterministic skeleton and performing a careful analysis of the crawl-by transients exhibited near the saddle-focus equilibrium.  An initial numerical investigation shows that for several sample paths, the slow passage through the saddle-focus equilibrium remains preserved in presence of uncorrelated multiplicative noise (which is chosen to be much smaller compared to the timescale separation) as the paths remain concentrated in neighborhoods of the deterministic solution, where the size of the neighborhood is proportional to the strength of the noise. Depending on the strength and properties of the noise, our method of finding early warning signals of extinction can be possibly extended, and a multi-scale approach as in \cite{kuskeetal} can be taken to derive the envelope equation for the stochastic amplitude of the oscillations. However, a detailed analysis is beyond the scope of this paper. We also leave as future work to study the effect of noise on the length of the transient dynamics \cite{Hastings3, reimer}.

We finally remark that our averaging approach can be applied to analyze a wide range of oscillatory dynamics characterized by two intrinsic timescales that may extend beyond autonomous systems.  Additional avenues for future research  include expanding the analysis to wider range of parameters such as to parameter regimes where amplitude-modulated oscillations past a torus bifurcation are observed as documented in \cite{Sadhunew}, or in regimes where long transients in the form of mixed-mode oscillations preceding a population collapse occur (such as in the region immediately past $PD_1$ in  figure \ref{one_par_bif}), and extending the system to time-dependent parameters.

\section*{Appendix}
\label{sec:appendix}
  \emph{Expressions for the coefficients in system (\ref{normal2}):} The expressions for $\omega$, $F_{13}$, $F_{111}$, $H_3$, $H_{11}$, and $\alpha(h)$ in Theorem \ref{normal} are given below:
 \begin{eqnarray*} 
\left\{
\begin{array}{ll}
\omega &=\sqrt{-\bar{x} (\bar{y}\bar{\phi_y}\bar{\chi_x}+\bar{z} \bar{\phi_z}\bar{\psi_x})},\\ [5pt]

F_{13}&=  \bar{x} \bar{\phi_z}(\bar{z}\bar{\psi_z}-\bar{y}\bar{\chi_y})+\frac{\bar{x}}{\bar{\phi_y}}(\bar{y}\bar{\phi^2_y}\bar{\chi_z} -\bar{z}\bar{\phi^2_z}\bar{\psi_y}) +\frac{\omega^2}{\bar{\phi_{xx}}\bar{\phi_y}}(\bar{\phi_{xz}}\bar{\phi_y} - \bar{\phi_{xy}}\bar{\phi_z}),\\[5pt]

F_{111} &= \frac{\omega^2}{\bar{x}^2\bar{\phi^2_{xx}}} (3\bar{\phi}_{xx}+\bar{x}\bar{\phi}_{xxx}) +\frac{\bar{x}}{\omega^2}[\bar{y}\bar{\chi_x}(\bar{y} \bar{\phi_y}\bar{\chi_y} + \bar{z}\bar{\phi_z}\bar{\psi_y})+ \bar{z}\bar{\psi_x} (\bar{y}\bar{\phi_y}\bar{\chi_z}+\bar{z} \bar{\phi_z}\bar{\psi_z} )] \\
&+\frac{1}{\bar{x}\bar{\phi}_{xx}}[\bar{y}\bar{\chi_x}(\bar{\phi_y}+\bar{x}\bar{\phi}_{xy}) +\bar{z}\bar{\psi_x}(\bar{\phi_z} +\bar{x}\bar{\phi}_{xz})+ \bar{x}(\bar{y}\bar{\phi_y}\bar{\chi}_{xx}+ \bar{z}\bar{\phi_z}\bar{\psi}_{xx})], \\ [7pt]

 H_3 &= \frac{\bar{x}\bar{y} \bar{z}}{\omega^2}[\bar{\psi_x}(\bar{\phi_y} \bar{\chi_z} -\bar{\phi_z}\bar{\chi_y} ) - \bar{\chi_x}(\bar{\phi_y} \bar{\psi_z} -\bar{\phi_z}\bar{\psi_y} ) ],\\ [5pt]
 
 H_{11} &= \frac{\bar{x}\bar{y}\bar{z}}{\omega^4}\bar{\phi_y}(\bar{\psi_x}-\bar{\chi_x}) \left(\bar{y} \bar{\chi_x}\bar{\chi_y}+\bar{z}\bar{\psi_x}\bar{\psi_z}\right) +  \frac{\bar{y}\bar{z}}{\omega^2 \bar{x}\bar{\phi}_{xx}}\bar{\phi_y} \left(\bar{z} \bar{\psi_x}\bar{\chi}_{xx}-\bar{x}\bar{\chi_x}\bar{\psi}_{xx}\right), \\[5pt]
 
  \alpha(h)&= \left\{-\begin{pmatrix} (\bar{f}_1)_{xx} & (\bar{f}_1)_{xy} & (\bar{f}_1)_{xz}
\end{pmatrix} J^{-1}
 \begin{pmatrix}
(\bar{f}_1)_h  \\
(\bar{f}_2)_h\\
(\bar{f}_3)_h
 \end{pmatrix} +(\bar{f}_1)_{xh}\right\}\Big(\frac{h-\bar{h}}{\zeta}\Big) \\
 & -\frac{1}{\omega^2}\{(\bar{f}_1)_y [(\bar{f}_2)_y (\bar{f}_2)_x + (\bar{f}_2)_z (\bar{f}_3)_x] + (\bar{f}_1)_z [(\bar{f}_3)_y (\bar{f}_2)_x + (\bar{f}_3)_z (\bar{f}_3)_x) ]  \}.
      \end{array} 
\right. 
\end{eqnarray*}

 \emph{Hopf bifurcation of system (\ref{normal1}):} System (\ref{normal1}) undergoes a Hopf bifurcation at $h=\bar{h}+\zeta A+O(\zeta^{3/2})$ for sufficiently small $\zeta>0$, where $A$ is the solution of the equation
\bes \nonumber
 \left\{-\begin{pmatrix} (\bar{f}_1)_{xx} & (\bar{f}_1)_{xy} & (\bar{f}_1)_{xz}
\end{pmatrix} J^{-1}
 \begin{pmatrix}
(\bar{f}_1)_h  \\
(\bar{f}_2)_h\\
(\bar{f}_3)_h
 \end{pmatrix} +(\bar{f}_1)_{xh}\right\}A 
 & =&\frac{1}{\omega^2}\{(\bar{f}_1)_y [(\bar{f}_2)_y (\bar{f}_2)_x + (\bar{f}_2)_z (\bar{f}_3)_x] \\
  \label{appnd1} &+& (\bar{f}_1)_z [(\bar{f}_3)_y (\bar{f}_2)_x + (\bar{f}_3)_z (\bar{f}_3)_x) ]  \}.
\ees

 \emph{Transformation to normal coordinates:} The transformations from the original coordinates $(x,y,z)$ to the normal form coordinates $(u,v,w)$ are given below: 
\begin{eqnarray}  \label{convert}
\left\{
\begin{array}{ll}
 u &= \frac{\bar{f_{1xx}}}{\omega} X -\delta (\bar{f_{1y}} B_1(A_1, A_2, X, Y, Z)+\bar{f_{1z}}B_2(A_1, A_2, X, Y, Z) \\
& +  \frac{\delta}{3}C_{uv} \Big[ \Big( \frac{\bar{f_{1xx}}}{\omega} X 
-  \delta (\bar{f_{1y}} B_1(A_1, A_2, X,Y,Z) + \\
&  \bar{f_{1z}}B_2((A_1, A_2, X, Y, Z))\Big)^2 
  \Big(-1+\frac{\bar{f_{1xx}}}{2\omega^2} (\bar{f_{1y}}Y+\bar{f_1}_zZ )\Big)+ \\
& \frac{\bar{f^2_{1xx}}}{\omega^4} (\bar{f_{1y}}Y+\bar{f_{1z}}Z)^2 \Big],\\
 v &= \frac{\bar{f_{1xx}}}{\omega^2} (\bar{f_{1y}}Y+\bar{f_{1z}}Z),\\
 w &=- \frac{\bar{f_{1xx}} \bar{f_{1y}}}{\omega^2 \bar{f_{1z}}} \Big[ \Big(1+ \frac{\bar{f_{2x}}\bar{f_1}_{y}}{\omega^2}\Big) Y + {\bar{f_{2x}}\bar{f_{1z}}}{\omega^2} Z + \frac{\delta}{\omega}\bar{f_1}_{xx} A_1 X\Big],
    \end{array} 
\right. 
\end{eqnarray}
where
\bess
X = \frac{x-x_0}{\sqrt{\zeta}}, \ Y = \frac{y-y_0}{\zeta} , \ Z = \frac{z-z_0}{\zeta},
\eess
and 
\bess
A_1 & =& \left( 1+ \frac{\bar{f_{2x}}\bar{f_{1y}}}{\omega^2}\right) (\bar{f_{2y}}\bar{f_{2x}} + \bar{f_{2z}}\bar{f_{3x}}) + \frac{\bar{f_{2x}}\bar{f_{1z}}}{\omega^2} (\bar{f_{3y}}\bar{f_{2x}} + \bar{f_{3z}}\bar{f_{2x}}), \\
A_2 & = &  \frac{\bar{f_{3x}}\bar{f_{1y}}}{\omega^2} (\bar{f_{2y}}\bar{f_{2x}} + \bar{f_{2z}}\bar{f_{3x}}) + \left( 1+\frac{\bar{f_{3x}}\bar{f_{1z}}}{\omega^2}\right)  (\bar{f_{3y}}\bar{f_{2x}} + \bar{f_{3z}}\bar{f_{2x}}),\\
C_{uv} &=& -\frac{1}{\omega^2} \Big[ \bar{f_{1y}}(\bar{f_{2y}}\bar{f_{2x}} + \bar{f_{2z}}\bar{f_{3x}}) + \bar{f_{1z}} (\bar{f_{3y}}\bar{f_{2x}} + \bar{f_{3z}}\bar{f_{2x}})\Big] \\
&-& \frac{1}{\bar{f_{1xx}}} 
\Big[\bar{f_{1xy}}\bar{f_{2x}} + \bar{f_{1xz}}\bar{f_{3x}} + \bar{f_{1y}}\bar{f_{2xx}} + \bar{f_{1z}}\bar{f_{3xz}}\Big],
\\
B_1 &=& -\frac{\bar{f_{1xx}}}{\omega^4} (\bar{f_{1y}}Y+ \bar{f_{1z}}Z) (\bar{f_{2y}}\bar{f_{2x}} + \bar{f_{2z}}\bar{f_{3x}}) +\frac{\bar{f_{1xx}}\bar{f_{2xx}}}{2\omega^2} + 
\frac{\bar{f_{2y}}\bar{f_{1xx}} }{\omega^2}\Big[\left(1+ \frac{\bar{f_{2x}}\bar{f_{1y}}}{\omega^2}\right) Y \\
&+& \frac{\bar{f_{2x}}\bar{f_{1z}}}{\omega^2} Z 
+ \frac{\delta A_1 X}{\omega} \Big] +  \frac{\bar{f_{2z}}\bar{f_{1xx}} }{\omega^2}\Big[ \frac{\bar{f_{3x}}\bar{f_{1y}}}{\omega^2} Y + \Big(1+\frac{\bar{f_{3x}}\bar{f_{1z}}}{\omega^2}\Big) Z + \frac{\delta A_2 X}{\omega} \Big],\\
B_2 &=& -\frac{\bar{f_{1xx}}}{\omega^4} (\bar{f_{1y}}Y+ \bar{f_{1z}}Z) (\bar{f_{3y}}\bar{f_{2x}} + \bar{f_{3z}}\bar{f_{3x}}) +
\frac{\bar{f_{1xx}}\bar{f_{3xx}}}{2\omega^2} + 
\frac{\bar{f_{3y}}\bar{f_{1xx}} }{\omega^2}\Big[\left(1+ \frac{\bar{f_{2x}}\bar{f_{1y}}}{\omega^2}\right) Y \\
&+& \frac{\bar{f_{2x}}\bar{f_{1z}}}{\omega^2} Z 
+ \frac{\delta A_1 X}{\omega} \Big] +  \frac{\bar{f_{3z}}\bar{f_{1xx}} }{\omega^2}\Big[ \frac{\bar{f_{3x}}\bar{f_{1y}}}{\omega^2} Y + \Big(1+\frac{\bar{f_{3x}}\bar{f_{1z}}}{\omega^2}\Big) Z + \frac{\delta A_2 X}{\omega} \Big].
\eess

\vspace{0.1in}

 \emph{Proof of Lemma \ref{funnel}:}
Let $M\geq N$ be the largest integer such that $\{u(\tau_i)\}_{i=1}^M$ decreases and $w(\tau)>0$ on $[0, \tau_M]$. 
By Remark \ref{rmk1}, the expression for $\bar{u^2}_{\textrm{est}}(\tau)$, defined by (\ref{uest})-(\ref{ubase}), holds up to $O(\delta^2)$ for all $\tau \in [\tau_1, \tau_M]$. It is clear  from (P2) that $\bar{w}(\tau)$ also has a decreasing envelope for all $\tau \in [\tau_1, \tau_N]$.  Estimating $\bar{w}(\tau)$ by $\bar{w}_{\textrm{base}}(\tau)$, where  $\bar{w}_{\textrm{base}}(\tau)$ is defined by (\ref{west1}), it can be easily verified that $\bar{w}_{\textrm{base}}(\tau)$ has a unique critical point at $\tau=\bar{\tau}_m>0$, where 
\bess
\bar{\tau}_m = \frac{1}{\delta H_3-b_2}\ln \Big(\frac{H_{11}b_1b_2e^{(\delta H_3-b_2) \tau_1}}{H_3(2\bar{w}(\tau_1)(\delta H_3-b_2)+\delta H_{11}b_1)} \Big)
\eess
 if (\ref{condnew}) holds.  The critical point corresponds to a minimum of  $\bar{w}_{\textrm{base}}(\tau)$ with \[\bar{w}_{\textrm{base}}(\bar{\tau}_m)= \frac{-H_{11}b_1}{2 H_3}e^{b_2(\bar{\tau}_m-\tau_1)}>0.\] 
 Since $\bar{w}(\tau)-\bar{w}_{\textrm{base}}(\tau)=O(\delta^2)$ as long as $w(\tau) =O(\delta)$, it follows that $\bar{w}(\tau)$ and the lower envelope of $w(\tau)$ also attain their minima.  Consequently, $w(\tau)$ attains its global minimum at $\tau = {\tau}_{\mathrm{min}}$, where ${\tau}_{\mathrm{min}}= {\bar{\tau}}_m +O(\delta^2)$. Finally, by (P2), since $w(\tau)$ has a decreasing envelope on $[\tau_1, \tau_N]$, it is clear that  ${\tau}_{\mathrm{min}}\geq \tau_N$.
 
  {\hfill \ensuremath{\Box}}

 
 \emph{Proof of Lemma \ref{funnel1}:}
  We start by considering the relative position of $w(\tau)$ with respect to $-\frac{H_{11}}{2H_3} (u^2+v^2)(\tau)$ and show that there exists some $\tau_c> {\tau}_{\mathrm{min}}$  such that  $w(\tau)$ intersects with the upper envelope of $-\frac{H_{11}}{2H_3} (u^2+v^2)(\tau)$ at $\tau =\tau_c$.  By (P2), $w(\tau)<-\frac{H_{11}}{2H_3} (u^2+v^2)(\tau)$ for all  $\tau \in [0, \tau_{N}]$. 
   Let $\{t_k\}_{k=1}^{M}$ and $\{s_k\}_{k=1}^{M-1}$ be an increasing sequence of locations of relative maxima and minima of $v(\tau)$ in $I$ respectively with $t_k<\tau_k<s_k$.  Since the trajectory is spiraling inwards, we have that  $\{v(t_k)\}_{k=1}^M$ and $\{v(s_k)\}_{k=1}^{M-1}$ are decreasing and increasing respectively. Without loss of generality, we assume that $|v(t_k)|\leq |v(s_k)|$ for all $1\leq k \leq M-1$. With the aid of (\ref{normal2}), we note that the zeros of $u(\tau)$ correspond to the critical points of $(u^2 +v^2)(\tau)$. Hence the relative maxima and minima of $(u^2 +v^2)(\tau)$ occur at $s_k$ and $t_k$ respectively with values $v^2(s_k)$ and $v^2(t_k)$  for all $k$. Denoting the local maxima and minima of $-\frac{H_{11}}{2H_3} (u^2+v^2)(\tau)$ on the interval $[t_k, s_k]$ by $M_k$ and $m_k$ respectively, we then have that $\{M_k\}_{k=1}^{M-1}$ and $\{m_k\}_{k=1}^{M-1}$ are both decreasing. 
If $u_{\textrm{env}}(\tau)$ and $v_{\textrm{env}}(\tau)$ denote the upper envelopes of $u(\tau)$ and $v(\tau)$ respectively, we then have from (\ref{flow1}) that up to $O(\delta^2)$,   $u_{\textrm{env}}(\tau)= Ae^{c_2(\tau-\tau_1)}$ and $v_{\textrm{env}}(\tau)= Be^{c_2(\tau-\tau_1)}$ for $\tau \in [\tau_1, \tau_M]$, where $A$ is defined by (\ref{relA}) and
  \bes  \label{rel00}
  B= \frac{A}{c_2^2+\vartheta^2},\ 
   c_2=\frac{b_2}{2}=\frac{\alpha \delta}{2} +O(\delta^k). \ees
 Since $(u^2+v^2)(\tau)\leq  (u_{\textrm{env}}^2(\tau) +v_{\textrm{env}}^2(\tau))$ and $|w(\tau)- \bar{w}_{\textrm{base}}(\tau)|=O(\delta^2)$, it suffices to show that there exists some  $\tau_c \in ({\tau}_{\mathrm{min}}, \tau_M)$ such that $\bar{w}_{\textrm{base}}(\tau)$ intersects with  $\frac{-H_{11}}{2H_3} (u_{\textrm{env}}^2(\tau) +v_{\textrm{env}}^2(\tau))$ at $\tau = \tau_c$.  To this end, we first note from (\ref{west1}), (\ref{rel0}) and (\ref{rel00})  that $\bar{w}_{\textrm{base}}(\tau)$ intersects with  $\frac{-H_{11}}{2H_3} (u_{\textrm{env}}^2(\tau) +v_{\textrm{env}}^2(\tau))$ if and only if $\frac{-H_{11}(A^2+B^2)}{2H_3} > \frac{\delta H_{11}b_1}{2(b_2-\delta H_3)}$, i.e.
 \bes
\label{rel2} 1+\frac{1}{c_2^2+\vartheta^2} > - \frac{1-e^{\frac{-4 \pi c_2}{\vartheta}}}{\frac{8\pi  c_2}{\vartheta}\Big(\frac{2c_2}{\delta H_3}-1\Big)}.
 \ees
 Since $c_2 =O(\alpha\delta/2)$ and $\vartheta \approx 1$, it then follows that the left hand side of (\ref{rel2}) is approximately equal to $2$, whereas the right hand side of (\ref{rel2}) is less than $1/2$ for $\delta$ sufficiently small, and thus (\ref{rel2}) holds. Next, we observe from (\ref{wbar_eqn}) and (\ref{ubase}) that as long as  $\bar{w}_{\textrm{base}}(\tau)$ is decreasing, $ \bar{w}_{\textrm{base}}(\tau)\leq  \frac{-H_{11}}{2H_3} b_1$. Since $b_1 \approx A^2/2$ and $c_2<0$, we then have from the definition of $u_{\textrm{env}}$ and  $v_{\textrm{env}}$ that 
 \bess
 \bar{w}_{\textrm{base}}(\tau) \leq  \frac{-H_{11}}{2H_3} b_1< -\frac{H_{11}}{2H_3} (u_{\textrm{env}}^2(\tau) +v_{\textrm{env}}^2(\tau)) \ \textnormal{on} \ [\tau_1, \bar{\tau}_m].
 \eess
  Hence, for sufficiently small $\delta>0$,
 we must have $\tau_c > \bar{\tau}_m = \tau_{\textrm{min}} +O(\delta^2)$.


Finally, we note that the existence of $\tau_c$ implies that $w(\tau)\geq M_i$ on $[s_i, t_{i+1}]$ for some $i$ between $1$ and $N$, and hence the trajectory is in $\mathbb{R}^3 \setminus \Omega_{\alpha}$ over the interval $[s_i, t_{i+1}]$. The monotonic properties of $\{M_k\}_{k=1}^{M-1}$ and $\{m_k\}_{k=1}^{M-1}$, and the fact that $\bar{w}'(\tau)>0$ for $\tau>{\tau}_{\mathrm{min}}$ will then imply that  the trajectory is in $\mathbb{R}^3 \setminus \Omega_{\alpha}$  over the interval $[s_i, \tau_M]$. Choosing $\tau_a=s_j$, where $1<j<M$ is the smallest integer such that $s_j\leq \tau_c$ with $w(s_j)>M_j$, and $\tau_b=\tau_M$ then yields the result. 

  {\hfill \ensuremath{\Box}}

 \emph{Proof of Lemma \ref{invariance}:} Let $\{\tau_i\}_{i=1}^M$ be an increasing sequence of locations of relative maxima of $u(\tau)$ such that $\{u(\tau_i)\}_{i=1}^M$ is decreasing, where $M$ is some integer greater than $N$ to be chosen later.  By Remark \ref{rmk1}, we note that  the expressions for $\bar{u^2}_{\textrm{est}}(\tau)$ and $\bar{w}_{\textrm{base}}(\tau) $ defined by (\ref{uest}) and (\ref{west1}) respectively, hold up to $O(\delta^2)$ for all $\tau \in [\tau_1, \tau_M]$.  If $\bar{w}(\tau_1)< \frac{\delta H_{11}b_1}{2(b_2-\delta H_3)}$, then it follows from (\ref{west1}) that 
$\bar{w}_{\textrm{base}}(\tau)$ decreases and eventually becomes negative at $\tau =\tau_e$, where 
 \bess
{\tau_e} = \frac{1}{\delta H_3-b_2}\ln \Big(\frac{\delta H_{11}b_1e^{(\delta H_3 -b_2)\tau_1}}{\delta H_{11}b_2 e^{b_2 \tau_1}+2\bar{w}(\tau_1)(\delta H_3-b_2)} \Big).
\eess
 Since $ |w(\tau) - \bar{w}_{\textrm{base}}(\tau)|=O(\delta^2)$ and $\bar{w}_{\textrm{base}}'(\tau)<0$ for all $\tau \in [0, \tau_M]$, we must have that   $w(\tau)$ also changes its sign near $\tau_e$.  Let $\tilde{\tau}=\tau_e+o(1)$ be such that $w(\tilde{\tau})=0$.  Choose $M$ large enough such that $\tilde{\tau} <\tau_{M}$.
Since $q_e$ is unstable along the $w$-direction and $w'(\tilde{\tau}) =  \frac{\delta}{2} H_{11}u^2 (\tilde{\tau})\leq0$ (follows from (\ref{normal2})), we cannot have $w(\tau)=0$ for all $\tau\geq \tilde{\tau}$, thereby leading to $w(\tau)<0\leq  -\frac{H_{11}}{2H_3} (u^2+v^2)(\tau)$ for all $\tau >\tilde{\tau}$. Thus $(u(\tau), v(\tau), w(\tau))\in \Omega_{\alpha}$ for all $\tau >\tilde{\tau}$. It is also evident from  (\ref{normal2}) that  $w'(\tau) < \delta H_3 w$  and therefore if $w(\bar{\tau})=-\tilde{k}$ for some $\tilde{k}>0$ and $\bar{\tau} > \tilde{\tau}$, then by Gronwall's inequality,  $w(\tau) < -\tilde{k} e^{\delta H_3 (\tau -\bar{\tau})} \to -\infty$ as $\tau \to \infty$. Consequently, the fast variables $(u(\tau), v(\tau))$, governed by system (\ref{normal_par}), approach $(0,0)$ as $\tau \to \infty$.

Next, we will prove that $\Omega_{\alpha}$ is positively invariant with respect to 
 $(u(\tau),v(\tau),w(\tau))$ for all $\tau\geq 0$.  Denoting the sequences of local minima and maxima of  $-\frac{H_{11}}{2 H_3}(u^2+v^2)$  by $\{m_k\}_{k=1}^{M-1}$ and $\{M_k\}_{k=1}^{M-1}$ and their locations by $\{t_k\}_{k=1}^{M-1}$ and $\{s_k\}_{k=1}^{M-1}$ respectively with $t_k<\tau_k<s_k$, and recalling that $(u(\tau),  v(\tau), w(\tau)) \in \Omega_{\alpha}$  on $[0, \tau_N]$, we then have that  $w(\tau) < m_k$  on the interval $[t_k, t_{k+1})$ for all $1\leq k\leq N-1$.
As in the proof of Lemma \ref{funnel1}, we have that $m_k =-\frac{H_{11}}{2H_3}v^2(t_k) =-\frac{H_{11}B^2}{2H_3} e^{b_2 (t_k-\tau_1)}$ up to $O(\delta^2)$, where $B$ is defined by (\ref{rel00}), and thus we have 
 $\bar{w}_{\textrm{base}}(\tau)\approx w(\tau) <-\frac{H_{11}B^2}{2H_3} e^{b_2 (\tau-\tau_1)}$ on $[0, \tau_{N}]$, i.e. 
\bes \label{rel11}
\frac{\delta H_{11} b_1}{2(b_2-\delta H_3)} < -\frac{H_{11} B^2}{2H_3}.
\ees
 Since by our assumption $\bar{w}(\tau_1)< \frac{\delta H_{11}b_1}{2(b_2-\delta H_3)}$, we have from  (\ref{west1}) in combination with (\ref{rel11})  that
  $\bar{w}_{\textrm{base}}(\tau) < m_k$ on $[t_k, t_{k+1})$ for all $ 1\leq k\leq M-1$, i.e. $\bar{w}_{\textrm{base}}(\tau)< -\frac{H_{11}}{2 H_3}(u^2+v^2)(\tau)$  for all $\tau\in [0,\tau_M]$. 
Finally, since $w(\tau)=\bar{w}_{\textrm{base}}(\tau)+O(\delta^2)$, we can conclude that the solution is in $\Omega_{\alpha}$ for all $\tau\in [0,\tau_M]$. Combining this with the fact that  $(u(\tau), v(\tau), w(\tau))\in \Omega_{\alpha}$ for all $\tau >\tilde{\tau}$ proves the lemma.

  {\hfill \ensuremath{\Box}}

\section*{Acknowledgments}
The authors thank the anonymous reviewers for their suggestions that led to a significant improvement in the presentation of the article. S.S  acknowledges the Provost's Summer Research Fellowship program in Georgia College for supporting this research and is grateful for having fruitful discussions at the workshops on ``Advances in Mathematical Ecology" at the Fields Institute in December 2022 and at the University of Pittsburgh in June 2023.

\section*{Data Availability}
The manuscript has no associated data.


\end{document}